\begin{document}

\title{Exploiting Partial Correlations in Distributionally Robust Optimization}

\author{Divya Padmanabhan \and  Karthik Natarajan \and Karthyek R. A. Murthy}
\institute{Email: divya\_padmanabhan$@$sutd.edu.sg, karthik\_natarajan$@$sutd.edu.sg, karthyek\_murthy$@$sutd.edu.sg. Singapore University of Technology and Design}

\date{Submitted: October 2018}

\maketitle

\begin{abstract}
In this paper, we identify partial correlation information structures that allow for simpler reformulations in evaluating the maximum expected value of mixed integer linear programs with random objective coefficients. To this end, assuming only the knowledge of the mean and the covariance matrix entries restricted to block-diagonal patterns, we develop a reduced semidefinite programming formulation, the complexity of solving which is related to characterizing a suitable projection of the convex hull of the set $\{(\bold{x}, \bold{x}\bold{x}'): \bold{x} \in \mathcal{X}\}$ where $\mathcal{X}$ is the feasible region. In some cases, this lends itself to efficient representations that result in polynomial-time solvable instances, most notably for the distributionally robust appointment scheduling problem with random job durations as well as for computing tight bounds in Project Evaluation and Review Technique (PERT)
networks and linear assignment problems. To the best of our knowledge, this is the first example of a distributionally robust optimization formulation for appointment scheduling that permits a tight polynomial-time solvable semidefinite programming reformulation which explicitly captures partially known correlation information between uncertain processing times of the jobs to be scheduled.
\end{abstract}

\maketitle \section{Introduction} \label{sec:intro} We consider
decision problems where the objective involves maximizing the expected
value of $Z(\bold{\tilde{c}}),$ where
$\bold{\tilde{c}} = (\tilde{c}_1, \tilde{c}_2, \ldots, \tilde{c}_n)$ is a $n$-dimensional real
valued random vector, such that,
\begin{align}
  Z(\tilde{\bold{c}}) = \max \left\{ \tilde{\bold{c}}' \bold{x}: \bold{x}
  \in \mathcal{X} \right\},
\label{eq:inner-obj}
\end{align}
and the set $\mathcal{X}$ is the bounded feasible region to a mixed integer linear program (MILP):
\begin{align*}
  \mathcal{X} = \left\{ \bold{x} \in \mathbb{R}^n: \bold{A}\bold{x} =
  \bold{b}, \ \bold{x} \geq 0,\ x_j \in \mathcal{Z} \text{ for }
  j \in \mathcal{I} \subseteq [n] \right\}.
\end{align*}
The set $\mathcal{X}$ includes the feasible region to linear optimization problems as a special case. The distribution $\theta$ of $\bold{\tilde{c}}$ is not always known explicitly, while many a time, only a set $\mathcal{P}$ of distributions is known such that $\theta \in \mathcal{P}$. In this scenario, we are interested in computing the quantity $\sup\{\mathbb{E}_{\theta}[ Z(\tilde{\bold{c}})] : \mathbb{\theta} \in \mathcal{P} \}$, referred to as the distributionally robust bound.
In this paper, we focus on the case where only the first moment of
$\bold{\tilde{c}}$ along with some of the second moments are specified. Applications where such bounds have been previously studied include appointment scheduling, portfolio management and the newsvendor problem among others. For more details, the interested reader may refer to \cite{ben2009robust,Delage2010,Bertsimas2010,Natarajan2011,Kong2013,Wiese2014,natarajan2017reduced}.

A precise description of the problem is provided next. Suppose that $\mathcal{N}_1, \ldots, \mathcal{N}_R$ form a partition
of the set $\mathcal{N}=\{1, \ldots, n\},$ so that
$\mathcal{N} = \bigcup_{r} \mathcal{N}_r$ and
$\mathcal{N}_i \, \cap \, \mathcal{N}_j =\varnothing$ for $i \neq j.$
We use $n_r = |\mathcal{N}_r|$ to denote the size of the subset
$\mathcal{N}_r.$ For any vector $\bold{a} \in \mathbb{R}^n,$ let
$\bold{a}^r \in \mathbb{R}^{n_r}$ be the subvector formed using
elements in $\mathcal{N}_r$ as indices. Let
$\mathcal{P}(\mathbb{R}^n)$ be the set of probability distributions on
$\mathbb{R}^n.$ Suppose that the only information we know about the
probability distribution of $\tilde{\bold{ c}}$ is the first moment specified by $\mathbb{E}[\tilde{\bold{c}}] = \boldsymbol{\mu}$ and the second moment matrices $\mathbb{E}[\tilde{\bold{c}}^r(\tilde{\bold{c}}^r)'] = \bold{\Pi}^r$
for $r \in [R] = \{1,\ldots,R\}.$ In this situation, we are interested in:
\begin{align}
  Z^* = \sup \left\lbrace
  \mathbb{E}_\theta\left[Z(\bold{\tilde{c}})\right]:    \begin{array}{ll}
          \mathbb{E}_{\theta}[\bold{\tilde{c}}] =\boldsymbol{
          \mu}, \ \mathbb{E}_{\theta} [\tilde{\bold{c}}^r(\tilde{\bold{c}}^r)']  =
          \bold{\Pi}^r \text{ for } r \in [R\,], \ \theta \in
          \mathcal{P}(\mathbb{R}^n)
\end{array}
 \right\rbrace,
\label{Moment-Problem}
\end{align}
which quantifies the maximum possible expected value of
$Z(\bold{\tilde{c}})$ over all probability distributions $\theta$
whose first and second moments are consistent with the moment
information specified for the random vector $\bold{\tilde{c}}.$ We assume that all $r \in [R\,], \bold{\Pi}^r  \succ \boldsymbol{\mu}^r {\boldsymbol{\mu}^r}'$, which is sufficient to guarantee that strong duality holds and in the resulting dual formulations, the optimum is attained. Since $R$ denotes the number of non-overlapping subsets, the
partition for $R=n$ corresponds to the case where only the mean and
diagonal (variance) entries of the covariance matrix are specified. On
the other hand, $R = 1$ corresponds to the case
where the mean and the entire covariance matrix is specified. Hence, $\bold{\Pi}^r$'s denote known sub-matrices of $\bold{\Pi}$ which denotes the matrix of all second moments of $\tilde{\bold{{c}}}$. Thus, we relax the assumption that the complete matrix $\bold{\Pi}$ is known for a fixed $R > 1$, but only that some entries are known.
The model studied in this paper is closest to the model analyzed in \cite{Doan2012}. Therein, the authors studied the distributionally robust bound $\sup\{ \mathbb{E}_{\theta} [\max {\tilde{\bold{c}}}' \bold{x}: \bold{x} \in \mathcal{X} \subset \{0,1\}^n] : \theta \in {\cal P} \} $ where multivariate marginal discrete distributions of non-overlapping subsets of random variables are specified. While the bound is NP-hard to compute, \cite{Doan2012} identified two instances of the problem for subset selection and Project Evaluation and Review (PERT) networks, where the tight bound is computable in polynomial-time.
We build on the model in \cite{Doan2012} by allowing for decision variables $\bold{x} \in \mathcal{X} \subseteq \mathbb{R}^n$ and considering moment-based ambiguity sets.

\textbf{Notations.}  Let $\mathbb{R}^{m \times n}$ be the set of
$m\times n$ matrices with real entries, $\mathcal{S}^{k}$ be the
set of $k \times k$ symmetric matrices and $\mathcal{S}_+^{k}$ be the
set of $k \times k$ symmetric positive semidefinite (psd) matrices. We write
$\bold{A} \succeq 0$ to denote that $\bold{A}$ is a psd matrix. For
any positive integer $k,$ we use $[k]$ to denote the set
$\{1,2,\ldots,k\}.$ For any subset $I$ of $[k]$ and matrix
$\bold{A} \in \mathbb{R}^{k \times k},$ we use $\bold{A}[I]$ to denote
the principal submatrix of $\bold{A}$ formed by restricting to rows
and columns whose indices are elements of the set $I.$
For any set $\cal{E},$ we write $conv(\cal{E})$ to denote the convex hull of the
set $\cal{E}$. For a closed convex cone ${\cal K}$, the generalized completely positive cone over ${\cal K}$ is defined as the set of symmetric matrices that are representable as the sum of rank one matrices of the form:
\begin{align*}
\displaystyle {\cal C}({\cal K}) = \{\bold{A} \in \mathcal{S}^n : \exists \bold{b}_1,\ldots,\bold{b}_p \in {\cal K} \mbox{ such that } \bold{A} = \sum_{k \in [p]} \bold{b}_k\bold{b}_k'\}.
\end{align*}
For ${\cal K} = \mathbb{R}^{n}_{+}$, ${\cal C}(\mathbb{R}^{n}_{+})$ is the cone of completely positive matrices. The dual to this cone is the cone of copositive matrices denoted as ${\cal C}^*(\mathbb{R}^{n}_{+})$. More generally for ${\cal K} = \mathbb{R}^{n} \times \mathbb{R}^{n}_{+}$, ${\cal C}(\mathbb{R}^{n} \times \mathbb{R}^{n}_{+})$ is given by
\begin{align}
\displaystyle {\cal C}(\mathbb{R}^{n} \times \mathbb{R}^{n}_{+}) = \left\{\begin{bmatrix}
\bold{A} & \bold{B}' \\\bold{B} & \bold{C}
\end{bmatrix} \in \mathcal{S}_+^{n \times n}: \bold{C} \in {\cal C}(\mathbb{R}^{n}_{+})\right\}.
\end{align}

\section{Literature review}
\label{sec:lit-review}
 There is now a fairly significant literature on methods that either compute the tight distributionally robust bound $Z^*$ or weaker upper bounds on $Z^*$ for mixed integer linear optimization problems \cite{Delage2010,Bertsimas2010,Natarajan2011,Kong2013,natarajan2017reduced,XuB18,Grani,Bertsimas2018}. In general, one of the difficulties that arises in exact formulations to compute $Z^*$ under moment-based ambiguity sets is that it involves optimization over the cone of completely positive matrices, which is typically intractable. This naturally leads to the question of identifying specific instances for which the problem is tractable, which is our focus in this paper. We review some of the key concepts briefly next, before discussing the contributions in this work.

\subsection{Exact Reformulations: Completely Positive Matrices and Quadratic Forms}
Problem (\ref{Moment-Problem}) for $R = 1$ corresponds to the case where the mean  $\boldsymbol{\mu} \in \mathbb{R}^n$ and the entire second moment matrix $\bold{\Pi} \in \mathcal{S}_+^{n}$ is specified. The distributionally robust bound studied in \cite{Natarajan2011} is:
\begin{align}
Z_{\text{full}}^*(\boldsymbol{\mu},\bold{\Pi}) =  \sup \left\lbrace
  \mathbb{E}_{\theta} \left[ Z(\bold{\tilde{c}})\right] \ : \
\begin{array}{ll}
  \mathbb{E}_{\theta}[\bold{\tilde{c}}] =\boldsymbol{ \mu}, \ \mathbb{E}_{\theta} [\bold{\tilde{c}}\bold{\tilde{c}}']  = \bold{\Pi}, \   \theta \in \mathcal{P}(\mathbb{R}^n)
\end{array}
\right\rbrace.
\label{Full-Moment-Problem}
\end{align}
An exact reformulation of the problem is obtained in \cite{Natarajan2011} by using the expected value of the following random variables as decision variables:
\begin{align*}
\mathbb{E}\left( \begin{bmatrix}
1 \\ \tilde{\bold{c}} \\ \bold{x(\tilde{c})}
\end{bmatrix}\begin{bmatrix}
1 \\ \tilde{\bold{c}} \\ \bold{x(\tilde{c})}
\end{bmatrix}'\right),
\end{align*}
where $ \bold{x(\tilde{c})}$ is a randomly chosen optimal solution for the objective coefficients $\tilde{\bold{c}}$. For the case when the decision variables in the set ${\cal I}$ in $\mathcal{X}$ are binary, building on the seminal work in \cite{Burer2009}, \cite{Natarajan2011} provided an equivalent reformulation of this problem, under mild assumptions on the set $\mathcal{X}$ as a generalized completely positive program of the form:
\begin{equation}
\label{opt:karthik-comp-pos-formulation}
 \begin{array}{rrlllll}
{Z}_{\text{full}}^*(\boldsymbol{\mu},\bold{\Pi}) &= \displaystyle \max_{\bold{p}, \bold{X},
  \bold{Y}}
& & trace(\bold{Y}) \\
& \mbox{s.t} & &
\begin{bmatrix}
 1 & \boldsymbol{\mu}' & \bold{p}'  \\
\boldsymbol{\mu} & \bold{\Pi} & \bold{Y}'\\
   \bold{p}  & \bold{Y} & \bold{X}\\
\end{bmatrix}  \in {\cal C}(\mathbb{R}_{+} \times \mathbb{R}^{n} \times \mathbb{R}^{n}_{+}),
\\
& & & \displaystyle \bold{a}_k'\bold{p} = b_k, & \forall k \in [p],\\
& & & \displaystyle \bold{a}_k'\bold{X}\bold{a}_k = b_k^2, & \forall k \in [p],\\
& & & \displaystyle X_{jj} = x_j, & \forall j \in {\cal I},
\end{array}
\end{equation}
where $\bold{a}_k'$ is the $k$th row of the matrix $\bold{A}$. Unfortunately, this problem is hard to solve due to the difficulty in characterizing the generalized completely positive cone ${\cal C}(\mathbb{R}_{+} \times \mathbb{R}^{n} \times \mathbb{R}^{n}_{+})$. 
For matrices of size $n \geq 5$, testing for membership in the  completely positive cone ${\cal C}(\mathbb{R}^{n}_{+})$ is known to be NP-hard \cite{Dickinson2014}. However, for $n \leq 4$, the completely positive cone of matrices coincides with the doubly nonnegative cone of matrices  $\mathcal{DNN}^n$ = $\mathcal{S}^n_+ \cap \mathcal{N}^n $ where $\mathcal{N}^n$ denote the set of matrices of size $n$ with nonnegative elements. It is straightforward to characterise the doubly nonnegative cone of matrices using psd and nonnegativity conditions and this provides a tractable relaxation to the completely positive cone, since ${\cal C}(\mathbb{R}^{n}_{+}) \subseteq \mathcal{DNN}^n$ for all $n$. The doubly nonnegative relaxation thus results in an upper bound on $Z^*$, which might not be tight. There are several hierarchies of psd and nonnegative cones that have been developed to generate tighter approximations of the completely positive cone and the dual copositive cone including the works of \cite{BomzeCDL17}, \cite{Bomze2002}, \cite{Zuluaga2005}, \cite{ParrilloThesis2000}. We note that completely positive and copositive programming representations of distributionally robust optimization problems under alternative ambiguity sets such as Wasserstein-based ambiguity sets have been recently developed in \cite{Grani} and \cite{XuB18}.


A related formulation that builds on characterizing the convex hull of quadratic forms over the feasible region and semidefinite optimization was proposed in \cite{natarajan2017reduced}. They established an equivalent tight formulation to compute $Z^*$ as follows:
\begin{equation}
  \label{opt:karthik-reduced-sdp-formulation}
 \begin{array}{rrllllll}
\displaystyle {Z}_{\text{full}}^*(\boldsymbol{\mu},\bold{\Pi}) &= \displaystyle \max_{\bold{p}, \bold{X},
  \bold{Y}}
& & trace(\bold{Y}) \\
& \mbox{s.t} & &
\begin{bmatrix}
 1 & \boldsymbol{\mu}' & \bold{p}'  \\
\boldsymbol{\mu} & \bold{\Pi} & \bold{Y}'\\
   \bold{p}  & \bold{Y} & \bold{X}\\
\end{bmatrix} \succeq 0,
\\
& & & \begin{pmatrix}
\bold{p}, \bold{X}
\end{pmatrix}\in conv \left\lbrace
 \begin{pmatrix}
\bold{x},\, \bold{x} {\bold{x}}'\end{pmatrix} : \bold{x} \in \mathcal{X}
\right \rbrace.
\end{array}
\end{equation}
 This exact formulation requires an explicit characterization of the convex hull of quadratic forms on the feasible region. Characterising this convex hull  is known to be NP-hard for sets such as $\mathcal{X} = \{0,1\}^n$ which corresponds to characterizing the Boolean quadric polytope (see \cite{Pitowsky1991}, \cite{Padberg1989}). However, the approach allows for the possibility of using valid inequalities that have been developed in deterministic instances for the Boolean quadric polytope, to develop tighter formulations for distributionally robust bounds in applications such as in the newsvendor problem (see \cite{natarajan2017reduced}). Efficient representations of the convex hull in (\ref{opt:karthik-reduced-sdp-formulation}) are known for some special cases of $\mathcal{X}$ in low dimensions as discussed in \cite{Anstreicher2010} and in some special cases, in higher dimensions as discussed in \cite{Burer2015,Burer2018}. Identifying instances where this set is efficiently representable remains an active area of research.

\subsection{Contributions}
Our contributions in the paper are the following:
 \begin{enumerate}
 \item In Section \ref{sec:blk-diag-results}, we study MILPs with random objective coefficients where the first moments are entirely known and only partial information of the second moments is provided, corresponding to non-overlapping subsets of ${\cal N}$. We provide a tight reformulation of the problem in the spirit of formulation (\ref{opt:karthik-reduced-sdp-formulation}), building on the results in \cite{natarajan2017reduced}. However, as we show, this formulation requires psd constraints on smaller matrices and furthermore, it involves characterizing a suitable projection of the convex hull of the set $\{(\bold{x}, \bold{x}\bold{x}'): \bold{x} \in \mathcal{X}\}$, rather than the full convex hull. This provides a reduced SDP formulation for the problem under block-diagonal patterns of covariance information.

 \item We provide an application of the formulation to appointment scheduling in \Cref{sec:app-sched}. In the distributionally robust appointment scheduling problem with moment-based ambiguity sets, tight polynomial-sized formulations exist only for the mean-variance setting which corresponds to $R = n$, to the best of our knowledge. On the other hand, with a full covariance matrix which corresponds to $R = 1$, this problem is known to be hard to solve. By identifying an efficient characterization of projection of the convex hull of the set $\{(\bold{x}, \bold{x}\bold{x}'): \bold{x} \in \mathcal{X}\}$ in this example, we identify a new polynomial-time solvable instance of distributionally robust appointment scheduling with partial correlation information when $R = 2$. We also identify polynomial-time solvable instances in the longest path problem on PERT networks and the assignment problem with random coefficients in \Cref{sec:path} and \Cref{sec:asg}.
 \item In \Cref{sec:computations}, we perform a detailed computational study of the proposed reformulation in the distributionally robust appointment scheduling application. We compare the results with alternative formulations and help identify specific structures of correlations where the new formulation is most valuable.  Finally we study the optimal schedules generated by various formulations including ours.
 \end{enumerate}
\section{Tight bounds in the presence of block-diagonal correlation information}
\label{sec:blk-diag-results}
\subsection{A reduced semidefinite program}
\label{sec:reduced-SDP}
In Theorem \ref{thm:main} below, we identify a reduced semidefinite
programming formulation for evaluating $Z^\ast$ in which the positive
semidefinite constraints are imposed only on smaller matrices of dimensions
$n_1,\ldots,n_r$, instead of a larger matrix of dimension $n$. Moreover, Theorem \ref{thm:main} asserts that it is sufficient
to enforce the $(n^2 + 3n)/2$ dimensional convex hull constraint (ignoring symmetry) in \eqref{opt:karthik-reduced-sdp-formulation} on a suitable selection
involving only $\sum_r (n_r^2 + 3n_r)/2$ variables.


\begin{theorem}
\label{thm:main}
Define $Z^*$ as the tight bound:
\begin{align*}
    Z^* = \sup \left\lbrace
  \mathbb{E}_\theta\left[\max_{\bold{x} \in \mathcal{X}}
  \tilde{\bold{c}}'\bold{x}\right]: \
  \mathbb{E}_{\theta}[\bold{\tilde{c}}] =\boldsymbol{\mu}, \
  \mathbb{E}_{\theta} [\tilde{\bold{c}}^r(\tilde{\bold{c}}^r)']  =
          \bold{\Pi}^r \text{ for } r \in [R\,], \ \theta \in
          \mathcal{P}(\mathbb{R}^n) \right\}.
\end{align*}
Define $\hat{Z}^*$ as the optimal objective value of the following
semidefinite program:
\begin{equation}
\label{opt:SmallSDP}
 \begin{array}{rrlllll}
\displaystyle \hat{Z}^* &= \displaystyle  \max_{\bold{p}, \bold{X}^r, \bold{Y}^r} & \displaystyle \sum_{r=1}^R trace(\bold{Y}^r) \\
& \mbox{s.t} & \displaystyle
\begin{bmatrix}
 1 & {\boldsymbol{\mu}^r}' & {\bold{p}^r}'  \\
\boldsymbol{\mu}^r & \bold{\Pi}^r & {\bold{Y}^r}'\\
   \bold{p}^r  & \bold{Y}^r& \bold{X}^r\\
\end{bmatrix} \succeq 0, \;\quad\quad  \text{ for } r \in
[R\,],
\\
& & \begin{pmatrix}
\bold{p}, \, \bold{X}^1,  \ldots, \, \bold{X}^R
\end{pmatrix}\in conv \left\lbrace
 \begin{pmatrix}
\bold{x},\, \bold{x}^1 {\bold{x}^1}',\ldots, \, \bold{x}^R {\bold{x}^R}'
\end{pmatrix} : \bold{x} \in \mathcal{X}
\right \rbrace.
\end{array}
\end{equation}
Then, $\hat{Z}^* = Z^*$.
\end{theorem}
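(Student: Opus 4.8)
The plan is to prove the two inequalities $Z^\ast\le\hat Z^\ast$ and $\hat Z^\ast\le Z^\ast$ separately, following the template of the reduced SDP~\eqref{opt:karthik-reduced-sdp-formulation} of \cite{natarajan2017reduced}, the new ingredient being a block-by-block construction that exploits the fact that the moment constraints in~\eqref{Moment-Problem} separate across $\mathcal{N}_1,\ldots,\mathcal{N}_R$. Throughout I will use that $\mathcal{X}$ is compact (bounded and closed), so that the set $S:=\{(\bold{x},\bold{x}^1{\bold{x}^1}',\ldots,\bold{x}^R{\bold{x}^R}'):\bold{x}\in\mathcal{X}\}$ is compact; hence $conv(S)$ is compact and equals the set of barycenters of Borel probability measures on $S$. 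I will also invoke the standing assumption $\bold{\Pi}^r\succ\boldsymbol{\mu}^r{\boldsymbol{\mu}^r}'$ to keep the relevant moment matrices nondegenerate (and, with Cauchy--Schwarz, to see that $Z^\ast<\infty$). For the first inequality, I would fix a feasible $\theta$ and, by a measurable maximum theorem, pick a Borel map $\bold{x}^\ast(\cdot)$ returning a maximizer of $\bold{c}'\bold{x}$ over $\mathcal{X}$, so that $Z(\tilde{\bold{c}})=\tilde{\bold{c}}'\bold{x}^\ast(\tilde{\bold{c}})$ almost surely; then $\bold{p}:=\mathbb{E}_\theta[\bold{x}^\ast]$, $\bold{Y}^r:=\mathbb{E}_\theta[(\bold{x}^\ast)^r(\tilde{\bold{c}}^r)']$ and $\bold{X}^r:=\mathbb{E}_\theta[(\bold{x}^\ast)^r((\bold{x}^\ast)^r)']$ are finite (boundedness of $\mathcal{X}$ and finite second moments of $\tilde{\bold{c}}$), each block matrix in~\eqref{opt:SmallSDP} is a $\theta$-expectation of a rank-one psd matrix and hence psd, $(\bold{p},\bold{X}^1,\ldots,\bold{X}^R)$ is a barycenter of a probability measure on $S$ and hence lies in $conv(S)$, and the objective value $\sum_r trace(\bold{Y}^r)=\sum_r\mathbb{E}_\theta[(\tilde{\bold{c}}^r)'(\bold{x}^\ast)^r]=\mathbb{E}_\theta[Z(\tilde{\bold{c}})]$; taking the supremum over $\theta$ gives $Z^\ast\le\hat Z^\ast$.

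For the converse $\hat Z^\ast\le Z^\ast$, observe first that the constraints $M^r\succeq 0$ bound the entries of $\bold{Y}^r$ in terms of the fixed $\bold{\Pi}^r$ and the bounded $\bold{X}^r$, so the feasible set of~\eqref{opt:SmallSDP} is compact and an optimal $(\bold{p},\bold{X}^r,\bold{Y}^r)$ exists. By Carath\'eodory I would write $(\bold{p},\bold{X}^1,\ldots,\bold{X}^R)=\sum_{k=1}^{K}\lambda_k(\bold{x}_k,\bold{x}_k^1{\bold{x}_k^1}',\ldots,\bold{x}_k^R{\bold{x}_k^R}')$ with $\lambda_k>0$, $\sum_k\lambda_k=1$ and $\bold{x}_k\in\mathcal{X}$, and let $\bold{x}$ be the discrete random vector with $\mathbb{P}(\bold{x}=\bold{x}_k)=\lambda_k$, so that $\bold{x}\in\mathcal{X}$ a.s. with $\mathbb{E}[\bold{x}]=\bold{p}$ and $\mathbb{E}[\bold{x}^r{\bold{x}^r}']=\bold{X}^r$. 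The core step is a moment-realization lemma (the $R=1$ version is in \cite{natarajan2017reduced}): since $M^r\succeq 0$ and its lower-right blocks are precisely the first two moments of $\bold{x}^r$, one can attach a conditional law of $\tilde{\bold{c}}^r$ given $\bold{x}$ of the form $\tilde{\bold{c}}^r=\bold{D}^r\begin{pmatrix}1\\\bold{x}^r\end{pmatrix}+\tilde{\boldsymbol{\varepsilon}}^r$, where $\bold{D}^r$ is the linear least-squares coefficient read off from $M^r$ through a pseudoinverse and $\tilde{\boldsymbol{\varepsilon}}^r$ is a zero-mean noise independent of $\bold{x}$ whose covariance is the generalized Schur complement of $M^r$ (positive semidefinite exactly because $M^r\succeq 0$), so that the second-moment matrix of $(1,(\tilde{\bold{c}}^r)',(\bold{x}^r)')$ equals $M^r$. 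I would then glue the blocks by drawing $\tilde{\bold{c}}^1,\ldots,\tilde{\bold{c}}^R$ conditionally independently given $\bold{x}$; since every constraint defining $Z^\ast$, namely $\mathbb{E}[\tilde{\bold{c}}^r]=\boldsymbol{\mu}^r$ and $\mathbb{E}[\tilde{\bold{c}}^r(\tilde{\bold{c}}^r)']=\bold{\Pi}^r$, and also the quantities $\mathbb{E}[\bold{x}^r(\tilde{\bold{c}}^r)']=\bold{Y}^r$, live inside a single block, they are all inherited from the $M^r$'s. Consequently the marginal $\theta$ of $\tilde{\bold{c}}$ is feasible for $Z^\ast$, and since $\bold{x}\in\mathcal{X}$ a.s. one obtains
\begin{align*}
\mathbb{E}_\theta[Z(\tilde{\bold{c}})]\ \ge\ \mathbb{E}[\tilde{\bold{c}}'\bold{x}]\ =\ \sum_{r=1}^{R}\mathbb{E}\big[(\tilde{\bold{c}}^r)'\bold{x}^r\big]\ =\ \sum_{r=1}^{R} trace(\bold{Y}^r)\ =\ \hat Z^\ast,
\end{align*}
hence $Z^\ast\ge\hat Z^\ast$; combining the two bounds yields $\hat Z^\ast=Z^\ast$.

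I expect the moment-realization lemma to be the main obstacle. Two points need care: (i) the second-moment matrix of $(1,\bold{x}^r)$ is a convex combination of only a few rank-one terms and is typically rank-deficient, so one must use the pseudoinverse and range-inclusion form of the Schur-complement characterization of positive semidefiniteness rather than its invertible-pivot version; and (ii) one must verify that stitching the per-block conditional laws together by conditional independence across $r$ creates no inconsistency. Point (ii) is exactly where the structure pays off: the ambiguity set imposes no cross-block second moments, so the dependence between $\tilde{\bold{c}}^r$ and $\tilde{\bold{c}}^s$ for $r\ne s$ is unconstrained, and this is precisely what makes the reduced formulation~\eqref{opt:SmallSDP} — with psd conditions only on the smaller matrices of sizes $n_1,\ldots,n_R$ and a convex-hull constraint only on the projected variables — available.
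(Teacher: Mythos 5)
Your proposal is correct, but it reaches the conclusion by a genuinely different route from the paper, so a comparison is worth recording. For $Z^\ast\le\hat Z^\ast$, the paper does not argue from the probabilistic definition at all: it first rewrites $Z^\ast$ as the exact reduced SDP \eqref{opt:KarthikReducedSDP} by citing Theorem 2 of \cite{natarajan2017reduced} (optimizing additionally over the unspecified covariance $\bold{\Delta}$), and then simply observes that the principal submatrices indexed by $\mathcal{V}_r$ of any feasible large psd matrix are psd and that $\bold{X}[\mathcal{N}_r]$ inherits the projected convex-hull constraint; your measurable-selection argument re-derives the easy direction of that cited theorem block by block, which is more self-contained but duplicates known work. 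The real divergence is in the hard direction $\hat Z^\ast\le Z^\ast$. The paper stays entirely on the SDP side: after the same Carath\'eodory step, it assembles a $(2n+1)\times(2n+1)$ \emph{partial} matrix $\bold{L}_p$ whose specified entries sit on the blocks $\mathcal{V}_r$ plus the fully known $\hat{\bold{X}}$, shows the specification pattern is a chordal graph by exhibiting a perfect elimination ordering, and invokes the Grone--Johnson--S\'a--Wolkowicz psd completion theorem to produce a full $\hat{\bold{\Delta}},\hat{\bold{Y}}$ feasible for \eqref{opt:KarthikReducedSDP}. You instead construct an explicit feasible distribution: a discrete $\bold{x}$ from the Carath\'eodory weights, per-block conditional laws $\tilde{\bold{c}}^r\mid\bold{x}$ via the pseudoinverse regression coefficient and generalized Schur-complement noise, glued by conditional independence across $r$. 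That construction is sound (the range-inclusion form of the Schur-complement characterization is exactly what makes $\bold{B}^r(\bold{W}^r)^\dagger\bold{W}^r=\bold{B}^r$ go through), and it is in fact the same construction the paper carries out \emph{afterwards} in Lemma \ref{lemma:decomposition} and Proposition \ref{prop:wc-dist} to exhibit a worst-case distribution; note that the chain of inequalities in the proof of that proposition already yields $Z^\ast\ge\hat Z^\ast$ without circularity. So your route proves the theorem and the extremal-distribution result in one pass and avoids chordal completion entirely, at the cost of handling degenerate moment matrices and measurable selection by hand; the paper's route reuses \cite{natarajan2017reduced} as a black box, isolates the chordality of the block-diagonal pattern as the structural insight, and additionally produces a consistent full second-moment matrix $\hat{\bold{\Delta}}$ extending the given blocks, which is of independent interest.
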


Before proving this result, which forms the main part of this section, we discuss some implications. In comparison to formulation \eqref{opt:karthik-reduced-sdp-formulation}, formulation \eqref{opt:SmallSDP} involves psd constraints on multiple but much smaller matrices when $\max_r n_r$ is smaller than $n$. Furthermore, the theorem implies that only relevant projections of the convex hull of quadratic forms require to be characterised to compute $Z^*$, under block-diagonal correlation information. Such sparse characterizations have been previously exploited to identify polynomial-time solvable instances of unconstrained quadratic 0-1 optimization problems using an appropriate projection of the Boolean quadric polytope (see \cite{Padberg1989}). As we shall see in Section \ref{sec:poly-time}, the new formulation allows us
to derive compact representations that results in polynomial-time solvable instances for
the distributionally robust appointment scheduling problem, 
as well as for computing
worst-case bounds in PERT networks and bounds for the linear assignment problem with random objective.

\subsection{On chordal graphs and psd completion}
\label{sec:chordal-graphs}
A key element in the proof of Theorem \ref{thm:main} comprises in guaranteeing the existence of a psd matrix whose entries are partially
specified. Therefore, as a preparation towards the proof of Theorem
\ref{thm:main}, we provide a brief review of results on the psd
completion problem and a closely related notion of chordal graphs that
are relevant for our proofs; see, for example,
\cite{GRONE1984109,Laurent2009} and references therein for a detailed
exposition on the psd completion problem.

We call a matrix whose entries are specified only on a subset of its
positions as a \textit{partial matrix}. Suppose that $\bold{A}$ is a partial
matrix. The set of positions corresponding to the specified entries of
$\bold{A}$ is known as the \textit{pattern} of $\bold{A}$. A \textit{completion} of
the partial matrix $\bold{A}$ is simply a specification of the unspecified
entries of $\bold{A}$. If $\bold{A}$ is a partial symmetric matrix (that is, the
entry $A_{ji}$ is specified and is equal to $A_{ij}$ whenever $A_{ij}$
is specified) such that every principal specified submatrix of $\bold{A}$ is
psd, then $\bold{A}$ is said to be \textit{partial psd}. A \textit{psd
  completion} of the partial psd matrix $\bold{A}$ is said to exist if there
exists a specification of the unspecified entries of $\bold{A}$ such that the
fully specified matrix is psd.

A graph is said to be \textit{chordal} if any cycle of length greater
than or equal to four has a chord (see \cite{Berge1967}). Here, a
\textit{chord} is simply an edge that is not part of the cycle but
connects two vertices of the cycle. As we shall note in Lemma
\ref{lem:pe-ordering} below, the existence of a perfect elimination
ordering characterizes the chordal property of a graph. For a graph with $|V|$ vertices, a
\textit{perfect elimination ordering} is an ordering
$\beta_1, \ldots, \beta_{|V|}$ of the vertices such that for every
$i \in \{1,\ldots,\vert V \vert -1\},$ the set of vertices
$\{\beta_{i+1}, \beta_{i+2}, \ldots, \beta_{|V|}\} \cap
\mathcal{N}(\beta_i)$ form a clique, where $\mathcal{N}(v)$ is used to
denote the set of vertices adjacent to vertex
$v.$ 

The following well-known results on chordal graphs and psd completion
will be useful in proving Theorem \ref{thm:main}.
\begin{lemma}
  A graph is chordal if and only if it has a perfect elimination
  ordering.
  \label{lem:pe-ordering}
\end{lemma}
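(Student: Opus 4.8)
The plan is to prove the two directions separately, with the forward direction (chordal $\Rightarrow$ existence of a perfect elimination ordering) being the substantive one. For the reverse direction, suppose $\beta_1,\ldots,\beta_{|V|}$ is a perfect elimination ordering and let $C$ be any cycle of length at least four. Among the vertices of $C$, let $\beta_i$ be the one with the smallest index. Its two neighbours on $C$, say $u$ and $v$, both lie in $\{\beta_{i+1},\ldots,\beta_{|V|}\}$ and are both adjacent to $\beta_i$, so by the defining property of a perfect elimination ordering the set $\{\beta_{i+1},\ldots,\beta_{|V|}\}\cap \mathcal{N}(\beta_i)$ is a clique, whence $u$ and $v$ are adjacent. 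Since $C$ has length at least four, $u$ and $v$ are not consecutive on $C$, so the edge $uv$ is a chord; thus the graph is chordal.

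For the forward direction I would argue by induction on $|V|$, the base case being trivial. The crux is the classical fact (Dirac's lemma) that every chordal graph possesses a \emph{simplicial vertex}, i.e.\ a vertex $w$ whose neighbourhood $\mathcal{N}(w)$ induces a clique. Granting this, pick such a $w$, set $\beta_1 = w$, and observe that $G \setminus \{w\}$ is again chordal, since deleting a vertex cannot destroy a chord of any surviving cycle. By the induction hypothesis $G \setminus \{w\}$ has a perfect elimination ordering $\beta_2,\ldots,\beta_{|V|}$, and prepending $\beta_1 = w$ yields one for $G$: the condition at $w$ holds by simpliciality, and the conditions at the later vertices are inherited from $G \setminus \{w\}$.

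The main obstacle is therefore establishing Dirac's lemma, and I would prove the slightly stronger statement that a chordal graph which is not complete contains two non-adjacent simplicial vertices, again by induction on $|V|$. If $G$ is complete, every vertex is simplicial. Otherwise, choose non-adjacent vertices $a,b$ and a minimal $a$--$b$ vertex separator $S$, and let $A$ be the vertex set of the connected component of $G \setminus S$ containing $a$. A preliminary sub-lemma — itself a consequence of chordality — is that $S$ induces a clique: if $x,y \in S$ were non-adjacent, then by minimality of $S$ each of $x,y$ has a neighbour on the $a$-side and on the $b$-side, and stitching shortest paths through the two components yields an induced cycle of length at least four through $x,y$ with no chord, a contradiction. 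Now apply the induction hypothesis to the strictly smaller chordal graph $G[A \cup S]$: either it is complete, in which case any vertex of $A$ is simplicial in it, or it has two non-adjacent simplicial vertices, at least one of which lies in $A$ because $S$ is a clique. Either way we obtain $w \in A$ that is simplicial in $G[A \cup S]$; since $S$ separates $A$ from the rest of $G$, every neighbour of $w$ in $G$ lies in $A \cup S$, so $w$ is simplicial in $G$. Symmetrically one finds a simplicial vertex on the $b$-side, and these two vertices are non-adjacent since they lie in distinct components of $G \setminus S$. This completes the induction and hence the proof. (An alternative route to the forward direction, which I would mention only in passing, is to show directly that a Lexicographic Breadth-First Search or Maximum Cardinality Search on a chordal graph, read in reverse, produces a perfect elimination ordering.)
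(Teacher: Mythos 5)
Your proof is correct. Note that the paper does not prove Lemma \ref{lem:pe-ordering} at all: it simply cites the classical references (Theorem 1 of Rose, 1970), so there is no in-paper argument to compare against. What you have written is precisely the standard Dirac/Fulkerson--Gross/Rose proof that those references contain: the easy direction via the earliest-indexed vertex of a cycle, and the hard direction via the existence of a simplicial vertex, itself obtained from the fact that a minimal vertex separator in a chordal graph induces a clique. The only step worth stating a touch more carefully is the sub-lemma on separators: one should take $x$--$y$ paths of \emph{minimum length subject to having all internal vertices in $A$} (respectively $B$), so that chords within either path are excluded by minimality, chords between the two interiors are excluded because $A$ and $B$ lie in different components of $G \setminus S$, and the chord $xy$ is excluded by assumption; your phrasing "stitching shortest paths" gestures at this but the minimality has to be relative to the component, not to $G$. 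With that reading, the argument is complete and self-contained, which is strictly more than the paper provides.
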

\begin{lemma}
  Every partial positive semidefinite matrix with pattern denoted by a graph $G$ (where the vertices denote the rows (or columns) of the matrix and an edge is present between two vertices if the corresponding entry is specified) has a
  positive semidefinite completion if and only if $G$ is a chordal
  graph.
  \label{lem:psd-comp-chordal}
\end{lemma}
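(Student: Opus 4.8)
The plan is to prove the two implications separately, drawing on the characterization of chordality by a perfect elimination ordering from Lemma~\ref{lem:pe-ordering}. For the ``if'' direction (if $G$ is chordal then every partial psd matrix with pattern $G$ admits a psd completion) I would argue by induction on the number of vertices, peeling off a simplicial vertex. For the ``only if'' direction I would argue by contraposition, exhibiting for every non-chordal pattern an explicit partial psd matrix with no psd completion.

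\textbf{Sufficiency.} Let $\bold{A}$ be a partial psd matrix with chordal pattern $G$, and let $\beta_1,\ldots,\beta_{|V|}$ be a perfect elimination ordering, which exists by Lemma~\ref{lem:pe-ordering}. Since all neighbours of $\beta_1$ appear later in the ordering, the set $\{\beta_1\}\cup\mathcal{N}(\beta_1)$ is a clique, so every entry of $\bold A$ among these indices is specified and the corresponding principal submatrix is psd. Writing this block as $\begin{bmatrix} a & \bold m' \\ \bold m & \bold M_N\end{bmatrix}\succeq 0$, where $a = A_{\beta_1\beta_1}$, $\bold M_N = \bold A[\mathcal{N}(\beta_1)]$, and $\bold m$ is the cross vector, psd-ness forces $\bold m \in \mathrm{range}(\bold M_N)$, so I pick $\boldsymbol\lambda$ with $\bold M_N\boldsymbol\lambda = \bold m$. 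I would then fill in the unspecified entries of row/column $\beta_1$ (those indexed by non-neighbours $k$) by the rule $A_{\beta_1 k} := \sum_{i \in \mathcal{N}(\beta_1)}\lambda_i A_{ik}$, making the whole $\beta_1$-row a fixed linear combination of the rows indexed by $\mathcal N(\beta_1)$. Subtracting $\sum_i \lambda_i(\text{row } i)$ from row $\beta_1$ and performing the symmetric column operation is a congruence that leaves all entries outside row/column $\beta_1$ untouched, zeroes the off-diagonal of row/column $\beta_1$, and replaces the diagonal by $a - \bold m'\boldsymbol\lambda \geq 0$. The transformed matrix thus decouples into a nonnegative scalar and the partial matrix $\bold A[\{\beta_2,\ldots,\beta_{|V|}\}]$, whose pattern is the induced subgraph $G-\beta_1$, still chordal with $\beta_2,\ldots,\beta_{|V|}$ a perfect elimination ordering, and still partial psd. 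By the inductive hypothesis the latter has a psd completion; reversing the congruence produces a psd completion of $\bold A$, with the base case $|V|=1$ immediate.

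The main technical obstacle in this direction is the possibility that $\bold M_N$ is singular, so that $\boldsymbol\lambda$ is not unique and $\bold M_N^{-1}$ is unavailable. This is handled by the observation that psd-ness of the clique block already guarantees $\bold m \in \mathrm{range}(\bold M_N)$ and $a \geq \bold m'\bold M_N^{+}\bold m$ (equivalently, via a limiting argument perturbing $\bold M_N$ to $\bold M_N + \varepsilon \bold I$); the Schur-complement diagonal $a - \bold m'\boldsymbol\lambda$ is then well defined and nonnegative for any valid $\boldsymbol\lambda$, so the decoupling step goes through unchanged.

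\textbf{Necessity.} I would prove the contrapositive: if $G$ is not chordal it contains a chordless cycle $v_1 v_2\cdots v_k v_1$ with $k \geq 4$, and I would build a partial psd matrix with pattern $G$ that has no completion. For $k=4$ the standard obstruction suffices: on the cycle take unit diagonal and specified correlations forcing, through the equality case of Cauchy--Schwarz along each edge, a Gram representation $u_1,\ldots,u_4$ of unit vectors with $u_1=u_2=u_3=u_4$ yet $u_4\cdot u_1=-1$, a contradiction; concretely one sets the three consecutive edge entries to $1$ and the closing edge entry to $-1$. For general $k$ the same idea is realized by consecutive unit vectors separated by a fixed angle that cannot close up around the cycle, giving prescribed inner products with no consistent Gram completion. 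Finally, because the cycle is chordless, $G$ restricted to $\{v_1,\ldots,v_k\}$ has exactly the cycle as its pattern; I would extend the counterexample to all of $G$ by assigning unit diagonal and zero to every other specified entry, which keeps the matrix partial psd while forcing the cycle block (a principal submatrix of any completion) to be completed on its own, which is impossible. Hence no psd completion exists, completing the contrapositive and the proof.
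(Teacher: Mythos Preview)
The paper does not actually prove this lemma; it simply cites Proposition~1 and Theorem~7 of \cite{GRONE1984109}. Your proposal reproduces the classical Grone--Johnson--S\'a--Wolkowicz argument and is correct in both directions.

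Two small points of exposition are worth tightening. In the sufficiency step, your fill-in rule $A_{\beta_1 k} := \sum_{i \in \mathcal{N}(\beta_1)}\lambda_i A_{ik}$ references entries $A_{ik}$ that may themselves be unspecified when $k$ is not a neighbour of $\beta_1$; the argument still goes through because the congruence zeroes row/column $\beta_1$ \emph{formally} and leaves the lower block untouched, so one first completes $\bold A[\{\beta_2,\ldots,\beta_{|V|}\}]$ by induction and only then evaluates the rule numerically when reversing the congruence---but saying this explicitly avoids the appearance of circularity. In the necessity step, the claim that extending by zeros ``keeps the matrix partial psd'' needs one line: since the cycle is chordless, no clique of $G$ can contain two non-adjacent cycle vertices (that would be a chord) nor three consecutive ones (also a chord), hence any clique meets the cycle in at most one edge and the corresponding specified principal submatrix is, up to permutation, the identity or $\begin{bmatrix}1&\pm 1\\ \pm 1&1\end{bmatrix}\oplus I$, which is psd.
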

The proofs of Lemma \ref{lem:pe-ordering} and
\ref{lem:psd-comp-chordal} can be found, respectively, in
Theorem 1, \cite{Rose1970} and Proposition 1 and Theorem 7, \cite{GRONE1984109}.

\subsection{Proof of Theorem \ref{thm:main}}
\label{sec:proof-main-thm}
\textbf{Step 1: To show $Z^\ast \leq \hat{Z}^\ast $.} It follows from
the definitions of $Z^\ast$ and
$Z^\ast_{full}(\boldsymbol{\mu},\bold{\Pi})$ in \eqref{Moment-Problem}
and \eqref{Full-Moment-Problem} that
  $Z^\ast = \max\{ Z^\ast_{\text{full}}(\boldsymbol{\mu},
  \bold{\Delta}):\bold{\Delta}  \in S_n^+,
  \bold{\Delta}[\mathcal{N}_r] =  \bold{\Pi}^r \text{ for } r \in [R]\}.$
  Therefore we have from \cite[Theorem 2]{natarajan2017reduced} (see
  formulation \eqref{opt:karthik-reduced-sdp-formulation}) that
\begin{equation}
  \label{opt:KarthikReducedSDP}
 \begin{array}{rrlllll}
  Z^\ast &= \displaystyle \max_{\bold{p}, \bold{X},
  \bold{Y}, \bold{\Delta}}
& & trace(\bold{Y}) \\
& \quad\ \text{s.t} & &
\begin{bmatrix}
 1 & \boldsymbol{\mu}' & \bold{p}'  \\
\boldsymbol{\mu} & \bold{\Delta} & \bold{Y}'\\
   \bold{p}  & \bold{Y}& \bold{X}\\
\end{bmatrix} \succeq 0,
\\
& & & \ \bold{\Delta}[\mathcal{N}_r] =  \bold{\Pi}^r, \quad \text{ for }
r \in [R\,],\\
& & &
\begin{pmatrix}
\bold{p}, & \bold{X}
\end{pmatrix}\in conv \left\lbrace
\begin{pmatrix} \bold{x}, & \bold{x}\bold{x}'
\end{pmatrix} : \bold{x} \in \mathcal{X}
\right\rbrace.\\
\end{array}
\end{equation}
Consider any $\bold{p}, \bold{X}, \bold{Y}, \bold{\Delta}$ feasible
for \eqref{opt:KarthikReducedSDP}. Take
$\bold{X}^r = \bold{X}[\mathcal{N}_r]$ and
$\bold{Y}^r = \bold{Y}[\mathcal{N}_r].$ The psd constraint in
\eqref{opt:KarthikReducedSDP} forces all the principal submatrices to
be psd. Given the block-diagonal partition, define $\{\mathcal{V}_r : r \in [R]\}$ to be the following subsets of
$\{1,\ldots,2n+1\}:$
\begin{align}
  \mathcal{V}_r = \{1\} \cup \{ i + 1: i \in \mathcal{N}_r\} \cup \{ n + i + 1: i
  \in \mathcal{N}_r\}, \quad \text{ for } r \in [R].
\label{Defn-Vr-Sets}
\end{align} Then, the principal submatrices formed by restricting
to entries from the index set $\mathcal{V}_r,$ for $r \in [R],$
satisfy,
\begin{align*}
\begin{bmatrix}
 1 & \boldsymbol{\mu}' & \bold{p}'  \\
\boldsymbol{\mu} & \bold{\Delta} & \bold{Y}'\\
   \bold{p}  & \bold{Y}& \bold{X}\\
\end{bmatrix}[\mathcal{V}_r]  =
\begin{bmatrix}
 1 & {\boldsymbol{\mu}^r}' & {\bold{p}^r}'  \\
\boldsymbol{\mu}^r & \bold{\Delta}[\mathcal{N}_r]&
\bold{Y}[\mathcal{N}_r]'\\
   \bold{p}^r  & \bold{Y}[\mathcal{N}_r]& \bold{X}[\mathcal{N}_r]\\
\end{bmatrix} =
\begin{bmatrix}
 1 & {\boldsymbol{\mu}^r}' & {\bold{p}^r}'  \\
\boldsymbol{\mu}^r & \bold{\Pi}^r & {\bold{Y}^r}'\\
   \bold{p}^r  & \bold{Y}^r & \bold{X}^r\\
\end{bmatrix}
\succeq 0.
\end{align*}
In addition, since
$(\bold{p}, \bold{X}) \in conv\{(\bold{x}, \bold{x}\bold{x}^\prime): \bold{x}
\in \mathcal{X}\},$ it is immediate that the principal submatrices
$\bold{X}[\mathcal{N}_r] = \bold{X}^r$ satisfy the projected convex
hull constraint in \eqref{opt:SmallSDP}.  Furthermore, the objective,
$trace(\bold{Y}) = \sum_{r} trace(\bold{Y}[\mathcal{N}_r]) =
\sum_{r} trace(\bold{Y}^r).$ Thus for every
$\bold{p}, \bold{X}, \bold{Y}, \bold{\Delta}$ feasible for
\eqref{opt:KarthikReducedSDP}, there exist
$\{\bold{p}, \bold{X}^r, \bold{Y}^r: r \in [R\,]\}$ feasible for
\eqref{opt:SmallSDP} with the same objective. Therefore
$Z^* \leq \hat{Z}^\ast.$

\textbf{\\Step 2: To show $Z^\ast \geq \hat{Z}^\ast$}\\
Suppose that
$\{\bold{{p}}_\ast,\bold{{ X}}_\ast^r,\bold{{Y}}_\ast^r: r \in
[R]\}$ maximizes \eqref{opt:SmallSDP}.  We show that
$Z^\ast \geq \hat{Z}^\ast$ by constructing $\hat{\bold{p}}, \hat{\bold{X}}, \hat{\bold{Y}}, \hat{\bold{\Delta}}$
feasible to \eqref{opt:KarthikReducedSDP} and
$trace(\hat{\bold{Y}}) = \hat{Z}^\ast = \sum_{r}
trace(\bold{Y}_\ast^r).$

\noindent\textit{Construction of $\hat{\bold{p}}$}: Simply, take $\hat{\bold{p}} = \bold{p}_\ast.$

\noindent\textit{Construction of $\hat{\bold{X}}$}: It follows from
Carath\'{e}odory's theorem and the convex hull constraint in
\eqref{opt:SmallSDP} that there exists $\hat{\mathcal{X}},$ a subset
of $\mathcal{X},$ containing at most $1+\sum_r (n_r^2 + 3n_r)/2$
elements such that,
\begin{align*}
  \left(
 \hat{\bold{p}} ,\bold{{X}}^1_*, \ldots, \bold{{X}}^R_*
  \right) = \sum_{\bold{x} \in \hat{\mathcal{X}}}\alpha_\bold{x}
  \left(\bold{x},\ \bold{x}^1{\bold{x}^1}', \ldots,\bold{x}^R
  {\bold{x}^R}' \right),
\end{align*}
for some $\{\alpha_{\bold{x}}: \bold{x} \in \hat{\mathcal{X}}\}$ satisfying
$\alpha_\bold{x}\geq 0,$
$\sum_{\bold{x} \in \hat{\mathcal{X}}} \alpha_{\bold{x}} = 1.$ Now
take
$\hat{\bold{X}} = \sum_{\bold{x} \in \hat{\mathcal{X}}} \alpha_{\bold{x}}
\bold{x}\bold{x}'.$ 
Then,
\begin{align}
  \begin{bmatrix}
1 & \hat{\bold{p}} \\
\hat{\bold{p}} &\bold{\hat{X}}
\end{bmatrix} =  \sum\limits_{\bold{x} \in \hat{\mathcal{X}}}
                   \alpha_{\bold{x}} \begin{bmatrix}
1 \\
\bold{x}
\end{bmatrix}
\begin{bmatrix}
1 \\
\bold{x}
\end{bmatrix}'
\quad \text{ and } \quad \hat{\bold{X}}[\mathcal{N}_r] = \sum_{\bold{x} \in \hat{\mathcal{X}}}
\alpha_{\bold{x}} \bold{x}^r{\bold{x}^r}' = \bold{X}_\ast^r, \text{ for } r \in [R].
\label{eq:inter-chull-feasibility}
\end{align}

\noindent\textit{Construction of $\hat{\bold{Y}}$ and $\hat{\bold{\Delta}}$}:
Consider $n \times n$ partial matrices $\hat{\bold{Y}}_p$ and
$\hat{\bold{\Delta}}_p$ with entries specified only along the
following principal submatrices:
\begin{align}
  \hat{\bold{Y}}_p[\mathcal{N}_r] = \bold{Y}_\ast^r \quad \text{ and } \quad
  \hat{\bold{\Delta}}_p[\mathcal{N}_r] = \bold{\Pi}^r, \quad \text{ for
  } r \in  [R\,].
\label{eq:inter-submatrix-entires}
\end{align}
Next, consider a $(2n+1) \times (2n+1)$ partial symmetric matrix
$\bold{L}_p$ constructed in terms of the partial matrices
$\hat{\bold{Y}}_p,$ $\hat{\bold{\Delta}}_p$ and the fully specified
matrix $\hat{\bold{X}}$ as follows:
\begin{align*}
 \bold{L}_p =
  \begin{bmatrix}
    1 & \boldsymbol{\mu}' & \hat{\bold{p}}'  \\
    \boldsymbol{\mu} & \hat{\bold{\Delta}}_p & \hat{\bold{Y}}_p\\
    \hat{\bold{p}}  & {\hat{\bold{Y}}_p}'& \hat{\bold{X}}\\
  \end{bmatrix}
=
\begin{bmatrix}
  1 & {\boldsymbol{\mu}^1}' & \ldots &  {\boldsymbol{\mu}^R}' & {\bold{p}_\ast^1}' & \ldots &{\bold{p}^R_\ast}' \\
  \boldsymbol{\mu}^1 & \bold{\Pi}^1 & ? & ? &  {\bold{{Y}}_\ast^1}' & ? & ? \\
  \vdots & ?  &  \ddots & ? & ? & \ddots  & ? \\
  \boldsymbol{\mu}^R & ?  & ?  &  \bold{\Pi}^R &  ? & ? & {\bold{{Y}}_\ast^R}'\\
 \bold{p}_\ast^1 &   \bold{{Y}}_\ast^1 & ? & ? &  &   &  \\
  \vdots & ? & \ddots  & ?  & & \hat{\bold{X}} &\\
  \bold{p}_\ast^R & ? & ? &  \bold{{Y}}_\ast^R &  &  &  \\
\end{bmatrix}.
\end{align*}
The entries marked `?' denote missing entries. 
By demonstrating that the underlying pattern of $\bold{L}_p$ is
chordal, Lemma \ref{lemma:psd-completion} below establishes that there
exists a psd completion for the partial matrix $\bold{L}_p.$


\begin{lemma}
\label{lemma:psd-completion}
The matrix $\bold{L}_p$ has a completion $\bold{L}_{comp}$ such that
$\bold{L}_{comp} \succeq 0 $.
\end{lemma}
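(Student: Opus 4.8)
The plan is to invoke Lemma~\ref{lem:psd-comp-chordal}, which states that a partial positive semidefinite matrix admits a positive semidefinite completion precisely when its pattern graph is chordal. I would therefore carry out two steps: first verify that $\bold{L}_p$ is partial positive semidefinite (symmetric, with every diagonal entry specified, and every fully specified principal submatrix psd), and then verify that the pattern graph $G$ of $\bold{L}_p$ is chordal; the lemma then yields the desired completion $\bold{L}_{comp}\succeq 0$.

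To prepare both steps, I would read the edges of $G$ directly off the block structure of $\bold{L}_p$. Index the $2n+1$ rows and columns by a vertex $o$ (the leading ``$1$''), vertices $a_1,\ldots,a_n$ (the $\tilde{\bold{c}}$-coordinates), and vertices $b_1,\ldots,b_n$ (the $\bold{x}$-coordinates). Then $o$ is adjacent to every other vertex, because its row and column are fully specified by $\boldsymbol{\mu}$ and $\hat{\bold{p}}$; $a_i \sim a_j$ and $a_i \sim b_j$ hold exactly when $i$ and $j$ lie in a common block $\mathcal{N}_r$, because the partial matrices $\hat{\bold{\Delta}}_p$ and $\hat{\bold{Y}}_p$ are specified only on the blocks $\mathcal{N}_r$; and $b_i \sim b_j$ holds for all $i, j$, because $\hat{\bold{X}}$ is fully specified. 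A short case analysis according to whether a clique contains some $a$-vertex then shows that every clique of $G$ is contained either in $\mathcal{V}_r$ (defined in \eqref{Defn-Vr-Sets}) for some $r$, or in $\{o\} \cup \{b_1,\ldots,b_n\}$.

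For the partial positive semidefiniteness step it then suffices to check the principal submatrices on $\mathcal{V}_1,\ldots,\mathcal{V}_R$ and on $\{o\} \cup \{b_1,\ldots,b_n\}$, since every fully specified principal submatrix is a principal submatrix of one of these. The submatrix $\bold{L}_p[\mathcal{V}_r]$ coincides with the matrix appearing in the positive semidefiniteness constraint of \eqref{opt:SmallSDP} --- one uses $\hat{\bold{X}}[\mathcal{N}_r] = \bold{X}_\ast^r$ from \eqref{eq:inter-chull-feasibility} to recognise its trailing block --- and it is psd because $\{\bold{p}_\ast, \bold{X}_\ast^r, \bold{Y}_\ast^r\}$ is feasible for \eqref{opt:SmallSDP}; the submatrix on $\{o\} \cup \{b_1,\ldots,b_n\}$ equals $\begin{bmatrix} 1 & \hat{\bold{p}}' \\ \hat{\bold{p}} & \hat{\bold{X}} \end{bmatrix}$, which is psd as the convex combination of rank-one psd matrices exhibited in \eqref{eq:inter-chull-feasibility}. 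For the chordality step I would exhibit a perfect elimination ordering and appeal to Lemma~\ref{lem:pe-ordering}: eliminate all $a$-vertices first, in any order, and then $o, b_1,\ldots,b_n$ in any order. When $a_i$ with $i \in \mathcal{N}_r$ is eliminated, its surviving neighbours are $o$, the not-yet-eliminated $a_j$ with $j \in \mathcal{N}_r$, and all $b_j$ with $j \in \mathcal{N}_r$; any two of these are adjacent (two $b$-vertices via $\hat{\bold{X}}$, two same-block vertices via $\hat{\bold{\Delta}}_p$ or $\hat{\bold{Y}}_p$, and $o$ to everything), so this neighbourhood is a clique. Once all $a$-vertices have been removed the remaining graph on $\{o, b_1,\ldots,b_n\}$ is complete, so the rest of the ordering is automatically valid; hence $G$ is chordal.

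The step I expect to be most delicate is the clique bookkeeping underlying both parts --- specifically, showing that a clique meeting block $\mathcal{N}_r$ in an $a$-vertex must be contained in $\mathcal{V}_r$ --- since this one fact simultaneously pins down the list of fully specified maximal submatrices and guarantees that each eliminated vertex has a clique neighbourhood. The remainder is a direct appeal to the feasibility of $\{\bold{p}_\ast, \bold{X}_\ast^r, \bold{Y}_\ast^r\}$ in \eqref{opt:SmallSDP} together with the completion results quoted in Lemmas~\ref{lem:pe-ordering} and~\ref{lem:psd-comp-chordal}.
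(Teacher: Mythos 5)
Your proposal is correct and follows essentially the same route as the paper: verify that $\bold{L}_p$ is partial positive semidefinite by checking the fully specified principal submatrices against the feasibility of $\{\bold{p}_\ast,\bold{X}_\ast^r,\bold{Y}_\ast^r\}$, establish chordality of the pattern graph via the same perfect elimination ordering (all $c$/$a$-vertices first, then $s$ and the $x$/$b$-vertices), and invoke Lemmas~\ref{lem:pe-ordering} and~\ref{lem:psd-comp-chordal}. If anything, you are slightly more careful than the paper on one point: the paper asserts that every fully specified principal submatrix of $\bold{L}_p$ lies inside some $\bold{L}_p[\mathcal{V}_r]$, which overlooks the fully specified block indexed by $\{s,x_1,\ldots,x_n\}$ when $R>1$, whereas you correctly treat that block separately and verify its positive semidefiniteness from the rank-one decomposition in \eqref{eq:inter-chull-feasibility}.
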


\begin{proof}
  Consider the following construction of an undirected graph $G$ with
  vertex set,
  $V=\left\lbrace s, c_1, c_2, \ldots, c_n, x_1, \ldots,x_n
  \right\rbrace,$ comprising $2n+1$ vertices. To define the edge set,
  identify the vertices $s, c_1, c_2, \ldots, c_n, x_1, \ldots, x_n,$
  respectively, with the rows (or columns) numbered $1,2,\ldots,2n+1$ of
  the partial matrix $\bold{L}_p.$ We assign an edge between two
  vertices of $G$ only if the the respective entry of the partial
  matrix $\bold{L}_p$ is specified. Therefore, graph $G$ represents
  the pattern of the partial matrix $\bold{L}_p$

  With the above described construction of graph $G,$ note that the
  vertices $\{x_1, \ldots, x_n \}$ form a clique in $G$ as the matrix
  $\hat{\bold{X}}$ is specified completely. The edges between the vertices
  $\{c_1,\ldots,c_n\}$ 
  correspond to the specified entries of the partial matrix
  $\hat{\bold{\Delta}}_p$. Likewise, the edges between vertices
  $\{c_1,\ldots,c_n\}$ and vertices $\{x_1,\ldots,x_n\}$ correspond to
  the known entries of the partial martial $\hat{\bold{Y}}_p$. Thus,
  for any $r \in [R\,],$ when restricted to vertices corresponding
  to $\bold{c}^r$ and $\bold{x}^r$, we again have a clique (see
  \Cref{fig:chordal-graph} for an illustration).

  Next, consider the ordering of the vertices,
  $c_1, c_2, \ldots,c_n, s, x_1,x_2,\ldots,x_n,$ of $G.$ Since the
  vertices $\{s,x_1,\ldots,x_n\}$ form a clique, it is immediate that
  for any $x_i,$ the neighbors of the node that appear after it in the
  ordering also form a clique. The same reasoning applies for the vertex
  $s.$ For any $i \in [n],$ let $r_i$ be the unique $r \in [R\,]$ such
  that $i \in \mathcal{N}_{r_i}.$ Subsequently, the neighbors of $c_i$
  that appear after it in the ordering comprises the collection $\{c_j, s, x_k: j,k \in \mathcal{N}_{r_i}, j > i\},$ which again
  forms a clique. This is because the vertices
  $\{s,c_j, x_j: j \in \mathcal{N}_r\}$ form a clique, for any
  $r \in [R\,].$ Consequently, the ordering
  $c_1, c_2, \ldots,c_n, s, x_1,x_2,\ldots,x_n$ is a perfect
  elimination ordering for the graph $G.$ Then due to Lemma
  \ref{lem:pe-ordering}, $G$ is a chordal graph.

  Recalling the definition of $\mathcal{V}_r$ in \eqref{Defn-Vr-Sets},
  observe that any fully specified principal submatrix of $\bold{L}_p$
  is a principal submatrix of
  \begin{align*}
    \bold{L}_p[\mathcal{V}_r] =
        \begin{bmatrix}
      1 & {\boldsymbol{\mu}^r}' & (\hat{\bold{p}}^r)'\\
      \boldsymbol{\mu}^r & \hat{\bold{\Delta}}_p[\mathcal{N}_r] & \hat{\bold{Y}}_p[\mathcal{N}_r]'\\
      \hat{\bold{p}}^r &\hat{\bold{Y}}_p[\mathcal{N}_r] &
      \hat{\bold{X}}[\mathcal{N}_r]
    \end{bmatrix}
                                          =
    \begin{bmatrix}
      1 & {\boldsymbol{\mu}^r}' & {\bold{p}_\ast^r}'\\
      \boldsymbol{\mu}^r & \bold{\Pi}^r & {\bold{Y}_\ast^r}'\\
      \bold{p}_\ast^r & \bold{Y}_\ast^r &  \bold{X}_\ast^r
    \end{bmatrix},
  \end{align*}
  for some $r \in [R\,].$ The latter equality follows from
  \eqref{eq:inter-submatrix-entires} and the second observation in
  \eqref{eq:inter-chull-feasibility}.  Since
  $\bold{p}_\ast, \bold{Y}^r_\ast, \bold{X}^r_\ast$ are taken to be
  feasible for \eqref{opt:SmallSDP}, we have that
  $\bold{L}_p[\mathcal{V}_r] \succeq 0$ for any $r \in [R\,].$ With
  the `maximal' fully specified principal submatrices
  $\{\bold{L}_p[\mathcal{V}_r]: r \in [R\,]\}$ being psd, we
  have that all the fully specified principal submatrices are
  psd. Therefore $\bold{L}_p$ is partial psd.

  Finally, with the pattern underlying the partial psd matrix
  $\bold{L}_p$ forming a chordal graph, it follows from Lemma
  \ref{lem:psd-comp-chordal} that there exists a psd completion for
  $\bold{L}_p.$ 
  \hfill$\Box$
\end{proof}

To complete the proof, consider the psd completion $\bold{L}_{comp}$ of $\bold{L}_p.$ Take
$\hat{\bold{\Delta}} := \bold{L}_{comp}[\{2,\ldots,n+1\}]$ and
$\hat{\bold{Y}}$ to be the $n \times n$ submatrix of $\bold{L}_{comp}$
formed from entries in rows $\{2,\ldots,n+1\}$ and columns
$\{n+2,\ldots, 2n+1\}.$ Then $\bold{L}_{comp} \succeq 0$ allows us to
write,
\begin{align}
  \bold{L}_{comp}  =
  \begin{bmatrix}
 1 & \boldsymbol{\mu}' & \hat{\bold{p}}'  \\
\boldsymbol{\mu} & \hat{\bold{\Delta}} & \hat{\bold{Y}}'\\
   \hat{\bold{p}}  & \hat{\bold{Y}}& \hat{\bold{X}}\\
\end{bmatrix} \ \succeq \  0.
\label{eq:inter-psd-feasibility}
\end{align}
Since the specified entries of $\bold{L}_p$ match with that of
$\bold{L}_{comp},$ it follows from the construction of $\bold{L}_p$
that $\hat{\bold{Y}}$ is a completion of $\hat{\bold{Y}}_p$ and
$\hat{\bold{\Delta}}$ is a psd completion of $\hat{\bold{\Delta}}_p;$
the latter completion is psd because the principal submatrices of
$\bold{L}_{comp}$ are psd.  Therefore, we have from
\eqref{eq:inter-submatrix-entires} that
\begin{align}
  \label{eq:inter-Delta-submatrix-eq}
  \hat{\bold{\Delta}}[\mathcal{N}_r] = \bold{\Pi}^r \quad \text{ and }
  \quad \hat{\bold{Y}}[\mathcal{N}_r] = \bold{Y}_\ast^r.
\end{align}
Furthermore, as we have taken
$\{\bold{p}_\ast, \bold{X}_\ast^r, \bold{Y}_\ast^r: r \in [R]\}$ to
maximize \eqref{opt:SmallSDP}, we have,
\begin{align}
  \hat{Z}^\ast = \sum_{r = 1}^R trace(\bold{Y}_\ast^r) = \sum_{r =
  1}^R trace(\hat{\bold{Y}}[\mathcal{N}_r])  = trace(\hat{\bold{Y}}).
\label{eq:inter-optimal-obj}
\end{align}

It follows from \eqref{eq:inter-psd-feasibility} and the first of the
two equations in \eqref{eq:inter-chull-feasibility} and
\eqref{eq:inter-Delta-submatrix-eq} that
$\hat{\bold{p}}, \hat{\bold{\Delta}}, \hat{\bold{X}}, \hat{\bold{Y}}$
are feasible for \eqref{opt:KarthikReducedSDP}. Therefore, the optimal
value of \eqref{opt:KarthikReducedSDP}, denoted by $Z^\ast,$ satisfies
${Z}^\ast \geq trace(\hat{\bold{Y}}).$ The desired
${Z}^\ast \geq \hat{Z}^\ast$ is now a consequence of
\eqref{eq:inter-optimal-obj}. This completes Step 2 and the proof of
Theorem \ref{thm:main}. \hfill$\Box$

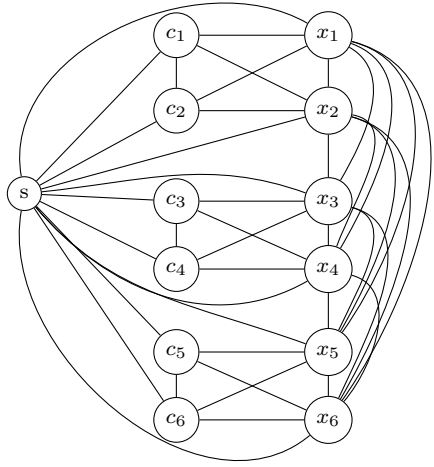
\begin{figure}[h]
\centering
\begin{tikzpicture}
\tikzset{vertex/.style = {shape=circle,draw,minimum size=0.5em}}
\tikzset{edge/.style = {-,> = latex'}}
\node[vertex] (s) at  (0,0) {s};
\node[vertex] (c1) at  (2,2.1) {$c_1$};
\node[vertex] (x1) at  (4,2.1) {$x_1$};
\node[vertex] (c2) at  (2,1.1) {$c_2$};
\node[vertex] (x2) at  (4,1.1) {$x_2$};
\node[vertex] (c3) at  (2,-0.1) {$c_3$};
\node[vertex] (x3) at  (4,-0.1) {$x_3$};
\node[vertex] (c4) at  (2,-1) {$c_4$};
\node[vertex] (x4) at  (4,-1) {$x_4$};
\node[vertex] (c5) at  (2,-2.1) {$\tiny c_{5}$};
\node[vertex] (x5) at  (4,-2.1) {$\tiny x_{5}$};
\node[vertex] (c6) at  (2,-3) {$c_{6}$};
\node[vertex] (x6) at  (4,-3) {$x_{6}$};
\draw[edge] (s) to (c1) ;\draw[edge] (s) to (c2) ; \draw[edge] (s) to[out=100,in=150] (x1) ;
\draw[edge] (s) to (x2) ;
\draw[edge] (c1) to (x1) ;
\draw[edge] (c2) to (x2) ;
\draw[edge] (c1) to (c2) ;
\draw[edge] (x1) to (x2) ;
\draw[edge] (c1) to (x2) ;
\draw[edge] (c2) to (x1) ;
\draw[edge] (s) to (c3) ;\draw[edge] (s) to (c4) ; \draw[edge] (s) to[out=6,in=160] (x3) ;
\draw[edge] (s) to [out=-50,in=-150] (x4) ;
\draw[edge] (c3) to (x3) ;
\draw[edge] (c4) to (x4) ;
\draw[edge] (c3) to (c4) ;
\draw[edge] (x3) to (x4) ;
\draw[edge] (c3) to (x4) ;
\draw[edge] (c4) to (x3) ;
\draw[edge] (x2) to (x3) ;
\draw[edge] (x1) to[out=-30,in=60] (x3) ;
\draw[edge] (x1) to[out=-20,in=60] (x4) ;
\draw[edge] (x2) to[out=-10,in=70] (x4) ;
\draw[edge] (s) to (c5) ;\draw[edge] (s) to (c6) ; \draw[edge] (s) to[out=-50,in=160] (x5) ;
\draw[edge] (s) to [out=-100,in=-140] (x6) ;
\draw[edge] (c5) to (x5) ;
\draw[edge] (c6) to (x6) ;
\draw[edge] (c5) to (c6) ;
\draw[edge] (x5) to (x6) ;
\draw[edge] (c5) to (x6) ;
\draw[edge] (c6) to (x5) ;
\draw[edge] (x4) to (x5) ;
\draw[edge] (x1) to[out=-15,in=55] (x5) ;
\draw[edge] (x2) to[out=-15,in=60] (x5) ;
\draw[edge] (x3) to[out=-15,in=65] (x5) ;
\draw[edge] (x1) to[out=-15,in=55] (x6) ;
\draw[edge] (x2) to[out=-15,in=60] (x6) ;
\draw[edge] (x3) to[out=-15,in=65] (x6) ;
\draw[edge] (x4) to[out=-15,in=50] (x6) ;
\end{tikzpicture}
\caption{Illustration for the graph $G$ for the case where $n=6$ and
  the partition is given by
  $\mathcal{N}_1 = \{1,2\}, \mathcal{N}_2 = \{3,4\}$ and
  $\mathcal{N}_3 = \{5,6\}.$ For $k$ odd, $c_{k-1}$ and $c_k$ are not
  connected and $\left\lbrace c_k, x_k, c_{k+1}, x_{k+1},
    s\right\rbrace$ form a
  clique.  }
\label{fig:chordal-graph}
\end{figure}

\subsection{On the structure of a worst-case distribution}
\label{sec:gen-worst-case-distr}
In this section, we exhibit a probability distribution for
$\bold{\tilde{c}}$ that attains the optimal value $Z^*$ of
\eqref{opt:SmallSDP}. The construction is along the lines of the worst
case distribution proposed in \cite{natarajan2017reduced}, adapted to our model.

We begin with a result on psd matrix factorization in
\cite{natarajan2017reduced}. The following definition of Moore-Penrose
pseudoinverse (see \cite{penrose_1955,rao1972}) is useful in stating
the psd matrix factorization in Lemma \ref{lemma:decomposition}. Let
$\bold{X}$ be a matrix of dimension $k_1 \times k_2.$ Then the
Moore-Penrose pseudoinverse of $\bold{X}$ is a matrix
$\bold{X}^\dagger$ of dimension $k_2 \times k_1$ and is defined as a
unique solution to the set of four equations:\\
\[\bold{X}\bold{X}^\dagger\bold{X} = \bold{X}, \quad
\bold{X}^\dagger \bold{X} \bold{X}^\dagger = \bold{X}^\dagger, \quad
\bold{X}\bold{X}^\dagger = (\bold{X}\bold{X}^\dagger)', \quad \text{
  and } \quad
\bold{X}^\dagger \bold{X} = (\bold{X}^\dagger \bold{X})'.\]



\begin{lemma}\cite[Theorem 1]{natarajan2017reduced}
\label{lemma:decomposition}
Suppose that $\bold{L}$ is a $(k_1 + k_2) \times (k_1+k_2)$ positive
semidefinite block matrix of the form,
\begin{align}
\bold{L}=\begin{bmatrix}
\bold{A} & \bold{B}' \\\bold{B} & \bold{C}
\end{bmatrix} \succeq 0,
\label{eq:block-matrix-form}
\end{align}
 where the matrices
$\bold{A} \in \mathbb{R}^{k_1 \times k_1}, \bold{C} \in
\mathbb{R}^{k_2 \times k_2}$ are symmetric and the matrix $\bold{C}$
admits an explicit factorization given by
$\bold{C} = \bold{V}\bold{V}'.$ Then $\bold{L}$ admits the following
factorization:
\begin{equation}
\bold{L} = \begin{bmatrix}
\bold{B'(V^{\dagger})'} \\\bold{V}
\end{bmatrix}
\begin{bmatrix}
\bold{B'(V^{\dagger})'} \\\bold{V}
\end{bmatrix}' + \begin{bmatrix}
\bold{U} \\ 0
\end{bmatrix}\begin{bmatrix}
\bold{U} \\ 0
\end{bmatrix}',
\end{equation}
where the matrix $\bold{U}$ is defined such that $\bold{A} -
\bold{B'C^{\dagger}B} = \bold{U U'} \succeq 0.$
\end{lemma}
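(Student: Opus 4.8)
The plan is to read the factorization off the generalized Schur complement characterization of positive semidefiniteness for $2\times 2$ block matrices. First I would invoke the well-known fact that, for the partitioned matrix $\bold{L}$ in \eqref{eq:block-matrix-form} with $\bold{A},\bold{C}$ symmetric, $\bold{L}\succeq 0$ holds if and only if three conditions are met: $\bold{C}\succeq 0$; the range condition $(\bold{I}-\bold{C}\bold{C}^\dagger)\bold{B}=0$, i.e. every column of $\bold{B}$ lies in the column space of $\bold{C}$; and the generalized Schur complement $\bold{A}-\bold{B}'\bold{C}^\dagger\bold{B}\succeq 0$. The third condition is precisely what makes the matrix $\bold{U}$ in the statement well defined: being symmetric positive semidefinite, $\bold{A}-\bold{B}'\bold{C}^\dagger\bold{B}$ factors as $\bold{U}\bold{U}'$ (for instance through its spectral decomposition).

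Next I would record two identities tied to the factorization $\bold{C}=\bold{V}\bold{V}'$, namely $\bold{C}^\dagger=(\bold{V}^\dagger)'\bold{V}^\dagger$ and $\bold{C}\bold{C}^\dagger=\bold{V}\bold{V}^\dagger$. Both follow by substituting a full-rank factorization of $\bold{V}$ (e.g. from its singular value decomposition) and checking the four Moore--Penrose defining equations, or directly from $\bold{V}'(\bold{V}^\dagger)'=\bold{V}^\dagger\bold{V}$ together with the symmetry and idempotence of $\bold{V}\bold{V}^\dagger$. In particular $\bold{V}\bold{V}^\dagger=\bold{C}\bold{C}^\dagger$ is the orthogonal projector onto the column space of $\bold{C}$; transposing the range condition $(\bold{I}-\bold{C}\bold{C}^\dagger)\bold{B}=0$ and using symmetry of this projector then yields the key identity $\bold{B}'\bold{V}\bold{V}^\dagger=\bold{B}'$.

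The proof then reduces to expanding the claimed right-hand side block by block. Setting $\bold{W}:=\bold{B}'(\bold{V}^\dagger)'$, the right-hand side equals $\begin{bmatrix}\bold{W}\bold{W}'+\bold{U}\bold{U}' & \bold{W}\bold{V}' \\ \bold{V}\bold{W}' & \bold{V}\bold{V}'\end{bmatrix}$. Its $(2,2)$ block is $\bold{V}\bold{V}'=\bold{C}$ by hypothesis; its $(1,1)$ block is $\bold{B}'(\bold{V}^\dagger)'\bold{V}^\dagger\bold{B}+\bold{U}\bold{U}'=\bold{B}'\bold{C}^\dagger\bold{B}+(\bold{A}-\bold{B}'\bold{C}^\dagger\bold{B})=\bold{A}$, using $\bold{C}^\dagger=(\bold{V}^\dagger)'\bold{V}^\dagger$ and the definition of $\bold{U}$; and its $(1,2)$ block is $\bold{W}\bold{V}'=\bold{B}'(\bold{V}^\dagger)'\bold{V}'=\bold{B}'(\bold{V}\bold{V}^\dagger)'=\bold{B}'\bold{V}\bold{V}^\dagger=\bold{B}'$, by symmetry of $\bold{V}\bold{V}^\dagger$ and the key identity above, with the $(2,1)$ block its transpose $\bold{B}$. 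Hence the right-hand side equals $\bold{L}$, as claimed.

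The main obstacle is purely the bookkeeping around the Moore--Penrose pseudoinverse in the rank-deficient case: if $\bold{C}$ (equivalently $\bold{V}$) were invertible then $\bold{V}\bold{V}^\dagger=\bold{I}$ and every identity above is immediate, but when $\bold{C}$ is singular the off-diagonal blocks match only because $\bold{L}\succeq 0$ forces the columns of $\bold{B}$ into the column space of $\bold{C}$ -- which is exactly what makes $\bold{B}'\bold{V}\bold{V}^\dagger=\bold{B}'$ rather than merely a proper projection of $\bold{B}'$. Once this range condition has been isolated from the generalized Schur complement theorem, the remaining verifications are routine matrix algebra, so I would keep them brief.
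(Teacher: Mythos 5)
Your proof is correct. Note that the paper itself does not prove this lemma --- it is imported verbatim from \cite[Theorem 1]{natarajan2017reduced} --- so there is no in-paper argument to compare against; your verification via the generalized Schur complement characterization (psd of $\bold{C}$, the range condition $(\bold{I}-\bold{C}\bold{C}^\dagger)\bold{B}=0$, and $\bold{A}-\bold{B}'\bold{C}^\dagger\bold{B}\succeq 0$), together with the pseudoinverse identities $\bold{C}^\dagger=(\bold{V}^\dagger)'\bold{V}^\dagger$ and $\bold{C}\bold{C}^\dagger=\bold{V}\bold{V}^\dagger$, is the standard route and you correctly isolate the one nontrivial point, namely that the off-diagonal blocks match in the rank-deficient case only because $\bold{L}\succeq 0$ forces $\bold{B}'\bold{V}\bold{V}^\dagger=\bold{B}'$.
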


For a given partition $\{\mathcal{N}_r: r \in [R\,]\}$ and
projected covariance matrices $\{\bold{\Pi}^r: r \in [R\,]\},$ suppose that
$\{\bold{p}_\ast, \bold{X}_\ast^r, \bold{Y}^r_\ast : r \in [R]\}$
maximizes \eqref{opt:SmallSDP}.  As in the proof of Theorem
\ref{thm:main}, it follows from Carath\'{e}odory's theorem and the
convex hull constraint in \eqref{opt:SmallSDP} that there exists
$\hat{\mathcal{X}},$ a subset of $\mathcal{X},$ containing at most
$1+\sum_r (n_r^2 + 3n_r)/2$ elements such that,
\begin{align*}
  \left(
  \bold{p}_\ast ,\bold{{X}}^1_*, \ldots, \bold{{X}}^R_*
  \right) = \sum_{\bold{x} \in \hat{\mathcal{X}}}\alpha_\bold{x}
  \left(\bold{x},\ \bold{x}^1{\bold{x}^1}', \ldots,\bold{x}^R
  {\bold{x}^R}' \right),
\end{align*}
for some $\{\alpha_{\bold{x}}: \bold{x} \in \hat{\mathcal{X}}\}$
satisfying $\alpha_\bold{x}\geq 0,$
$\sum_{\bold{x} \in \hat{\mathcal{X}}} \alpha_{\bold{x}} = 1.$
Consequently, for any $r \in [R\,],$ we have from Lemma
\ref{lemma:decomposition} that,
\begin{align}
\begin{bmatrix}
  \bold{\Pi}^r & {\boldsymbol{\mu}^r} & {\bold{Y}_\ast^r}' \\
  {\boldsymbol{\mu}^r}' & 1 & {\bold{p}_\ast^r}'\\
  \bold{Y}_\ast^r & \bold{p}_\ast^r & \bold{X}_\ast^r
\end{bmatrix}
&= \sum\limits_{\bold{x} \in \hat{\mathcal{X}}} \alpha_{\bold{x}}
\begin{bmatrix}
  \bold{\Pi}^r &  {\boldsymbol{\mu}^r} & {\bold{Y}_\ast^r}' \\
  {\boldsymbol{\mu}^r}' & 1 & {\bold{x}^r}'\\
  \bold{Y}_\ast^r & \ \bold{x}^r & \ \bold{x}^r {\bold{x}^r}'
\end{bmatrix} \nonumber\\
& = \sum\limits_{\bold{x} \in \hat{\mathcal{X}}} \alpha_{\bold{x}}
\begin{bmatrix}
 \bold{d}_r(\bold{x}^r)\\1\\\bold{x}^r
\end{bmatrix}
\begin{bmatrix}
 \bold{d}_r(\bold{x}^r)\\1\\\bold{x}^r
\end{bmatrix}'   +
\begin{bmatrix}
\bold{\Phi}_{r} & \bold{0}_{n_r,1} & \bold{0}_{n_r,n_r} \\
 \bold{0}_{1,n_r} & 0 &  \bold{0}_{1,n_r} \\
 \bold{0}_{n_r,n_r}& 0 &  \bold{0}_{n_r,n_r} \\
\end{bmatrix},
\label{eq:inter-psd-factorization}
\end{align}
where $\bold{d}_r(\bold{x}^r)\in \mathbb{R}^{n_r} $ and
$\bold{\Phi}_r \in \mathcal{S}_{n_r}^+$ for every $r \in [R\,].$ From the
above factorization, observe that,
\begin{align}
  trace(\bold{Y}^r_\ast) = trace\left(\sum_{x \in \hat{\mathcal{X}}}
  \alpha_{\bold{x}}\bold{x}^r\bold{d}_r(\bold{x}^r)'\right) =
  \sum_{x \in \hat{\mathcal{X}}}
  \alpha_{\bold{x}} \sum_{r = 1}^R \bold{d}_r(\bold{x}^r)'
  {\bold{x}^r}.
\label{eq:wc-objective}
\end{align}

For completeness, we will now list explicitly the expressions for the means $\bold{d}_r(\bold{x}^r)$ and $ \bold{\Phi_r}$. These expressions are obtained by making appropriate substitutions as per \Cref{lemma:decomposition}.
For every $r \in [R]$, define the matrix $\bold{V}_r$ of size $(n_r + 1) \times m_r$ where $m_r $ is the number of points in the projected space $\mathcal{X}^r$ as  follows.

\begin{align}
\bold{V}_r = \begin{bmatrix}
\ldots & \sqrt{\alpha_r(\bold{x}^r)} & \ldots\\
\vdots &  \sqrt{\alpha_r(\bold{x}^r) }\bold{x}^r& \vdots
\end{bmatrix}
\end{align} Each column of $\bold{V}_r$ corresponds to an element $\bold{x}^r$ of $\mathcal{X}^r$ and is of the form $ \begin{bmatrix}\sqrt{\alpha_r(\bold{x}^r)} \\  \sqrt{\alpha_r(\bold{x}^r) }\bold{x}^r \end{bmatrix}$.
Define $\bold{\Phi}_r$ of size $n_r \times n_r$ as:
\begin{align}
\bold{\Phi}_r = \bold{\Pi}^r - [\boldsymbol{\mu}^r \quad \hat{\bold{Y}}_*^r]\begin{bmatrix}
1 & \sum_{\bold{x}^r} \alpha_r(\bold{x}^r) (\bold{x}^{r})'\\
\sum_{\bold{x}^r}\alpha_r(\bold{x}^r) \bold{x}^{r} & \sum_{\bold{x}^r}\alpha_r(\bold{x}^r) \bold{x}^{r} (\bold{x}^{r})'
\end{bmatrix}^{\dagger} \begin{bmatrix}
\boldsymbol{\mu}^r \\
\hat{\bold{Y}}_*^r
\end{bmatrix}
\end{align}
The mean vector $\bold{d}_{r}(\bold{x}_r)$ is set to be the column vector of  the matrix \\
$\begin{bmatrix}
\boldsymbol{\mu}^r & \hat{\bold{Y}}_*^r
\end{bmatrix}(\bold{V}_r^{\dagger})'\times 1/\sqrt{\alpha_{r}(\bold{x}^r})$ corresponding to where $\bold{x}_r$ occurs in $\bold{V}_r$.

\begin{proposition}
  Suppose that
  $\{\bold{p}_\ast, \bold{X}_\ast^r, \bold{Y}^r_\ast : r \in [R]\}$ maximizes \eqref{opt:SmallSDP} and that there exists a
  finite $\hat{\mathcal{X}} \subseteq \mathcal{X}$ and
  $\{\alpha_{\bold{x}}: \bold{x} \in \hat{\mathcal{X}}\}$ satisfying
  \eqref{eq:inter-psd-factorization}. Let $\theta^\ast$ be the
  distribution of $\tilde{\bold{c}}$ generated as follows:
\begin{itemize}
\item[] Step 1: Generate a random vector
  $\tilde{\bold{x}} \in \hat{\mathcal{X}} \subseteq \mathcal{X}$ such that
  $P(\tilde{\bold{x}} = \bold{x}) = \alpha_{\bold{x}}.$
\item[] Step 2: For every $r \in [R\,],$ independently generate a normally
  distributed random vector $\tilde{\bold{z}}_r \in \mathbb{R}^{n_r},$
  conditionally on ${\bold{x}},$ with mean $ \bold{d}_r(\bold{x}^r)$ and
  covariance $\bold{\Phi_r}$. Set $\bold{\tilde{c}}^r = \tilde{\bold{z}}_r.$
\end{itemize}
Then $\theta^\ast$ attains the maximum in \eqref{Moment-Problem}. 
\label{prop:wc-dist}
\end{proposition}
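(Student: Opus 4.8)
The plan is to verify that the distribution $\theta^\ast$ constructed in Steps 1--2 is feasible for \eqref{Moment-Problem} --- i.e., it reproduces the prescribed first moments $\boldsymbol{\mu}$ and the block second moments $\bold{\Pi}^r$ --- and that its objective value $\mathbb{E}_{\theta^\ast}[Z(\tilde{\bold{c}})]$ is at least $\hat Z^\ast$; combined with Theorem~\ref{thm:main} and the fact that every feasible distribution has objective at most $Z^\ast = \hat Z^\ast$, this forces $\theta^\ast$ to be optimal. First I would check the moments. Conditionally on $\tilde{\bold{x}} = \bold{x}$, the block $\tilde{\bold{c}}^r$ is Gaussian with mean $\bold{d}_r(\bold{x}^r)$ and covariance $\bold{\Phi}_r$, so $\mathbb{E}[\tilde{\bold{c}}^r(\tilde{\bold{c}}^r)' \mid \tilde{\bold{x}}=\bold{x}] = \bold{\Phi}_r + \bold{d}_r(\bold{x}^r)\bold{d}_r(\bold{x}^r)'$ and $\mathbb{E}[\tilde{\bold{c}}^r \mid \tilde{\bold{x}}=\bold{x}] = \bold{d}_r(\bold{x}^r)$. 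Taking expectation over $\tilde{\bold{x}}$ and reading off the top-left and top-middle blocks of the rank-one-plus-$\bold{\Phi}_r$ decomposition in \eqref{eq:inter-psd-factorization}, one gets $\mathbb{E}_{\theta^\ast}[\tilde{\bold{c}}^r(\tilde{\bold{c}}^r)'] = \bold{\Pi}^r$ and $\mathbb{E}_{\theta^\ast}[\tilde{\bold{c}}^r] = \boldsymbol{\mu}^r$; since the blocks $\mathcal{N}_r$ partition $\mathcal{N}$, the latter gives $\mathbb{E}_{\theta^\ast}[\tilde{\bold{c}}] = \boldsymbol{\mu}$. Hence $\theta^\ast$ is feasible, and in particular $\mathbb{E}_{\theta^\ast}[Z(\tilde{\bold{c}})] \le Z^\ast$.

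Next I would lower-bound the objective. For each realization $\tilde{\bold{x}} = \bold{x}$, the deterministic vector $\bold{x}$ is feasible for $\mathcal{X}$, so $Z(\tilde{\bold{c}}) = \max_{\bold{y}\in\mathcal{X}} \tilde{\bold{c}}'\bold{y} \ge \tilde{\bold{c}}'\bold{x} = \sum_{r} (\tilde{\bold{c}}^r)'\bold{x}^r$. Taking conditional expectation given $\tilde{\bold{x}} = \bold{x}$ and using $\mathbb{E}[\tilde{\bold{c}}^r \mid \tilde{\bold{x}}=\bold{x}] = \bold{d}_r(\bold{x}^r)$ yields $\mathbb{E}_{\theta^\ast}[Z(\tilde{\bold{c}}) \mid \tilde{\bold{x}}=\bold{x}] \ge \sum_r \bold{d}_r(\bold{x}^r)'\bold{x}^r$. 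Averaging over $\tilde{\bold{x}}$ and invoking \eqref{eq:wc-objective} gives $\mathbb{E}_{\theta^\ast}[Z(\tilde{\bold{c}})] \ge \sum_{\bold{x}\in\hat{\mathcal{X}}} \alpha_{\bold{x}} \sum_r \bold{d}_r(\bold{x}^r)'\bold{x}^r = \sum_r \mathrm{trace}(\bold{Y}^r_\ast) = \hat Z^\ast$. Combining the two inequalities with $\hat Z^\ast = Z^\ast$ from Theorem~\ref{thm:main} shows $\mathbb{E}_{\theta^\ast}[Z(\tilde{\bold{c}})] = Z^\ast$, so the supremum in \eqref{Moment-Problem} is attained at $\theta^\ast$.

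The main obstacle is the moment verification --- specifically, confirming that the explicit formulas for $\bold{d}_r$ and $\bold{\Phi}_r$ (built from the Moore--Penrose pseudoinverse via Lemma~\ref{lemma:decomposition}) do in fact make the right-hand side of \eqref{eq:inter-psd-factorization} have $\bold{\Pi}^r$ in its leading block and $\boldsymbol{\mu}^r, \bold{Y}^r_\ast$ in the off-diagonal blocks. This reduces to checking the identity $\bold{\Pi}^r = \bold{\Phi}_r + \sum_{\bold{x}} \alpha_{\bold{x}} \bold{d}_r(\bold{x}^r)\bold{d}_r(\bold{x}^r)'$ together with the consistency of the mixed blocks, which is exactly what the Schur-complement factorization of Lemma~\ref{lemma:decomposition} delivers once the $\bold{V}_r$-based expressions are substituted; the pseudoinverse identities $\bold{X}\bold{X}^\dagger\bold{X}=\bold{X}$ handle the degenerate case where $\bold{V}_r\bold{V}_r'$ is singular. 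A minor point to note is that the Gaussian conditional laws have all moments finite and the resulting $\theta^\ast$ indeed lies in $\mathcal{P}(\mathbb{R}^n)$, so there is no integrability obstruction to $\mathbb{E}_{\theta^\ast}[Z(\tilde{\bold{c}})]$ being well-defined and finite.
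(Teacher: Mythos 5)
Your proposal is correct and follows essentially the same route as the paper: feasibility is read off from the block structure of \eqref{eq:inter-psd-factorization} via the law of iterated expectations, and the objective is lower-bounded by $\mathbb{E}[\tilde{\bold{c}}'\tilde{\bold{x}}]$, which collapses to $\sum_r \mathrm{trace}(\bold{Y}^r_\ast)=\hat Z^\ast=Z^\ast$ using \eqref{eq:wc-objective} and Theorem~\ref{thm:main}. Your closing worry about re-deriving the formulas for $\bold{d}_r$ and $\bold{\Phi}_r$ is not actually needed, since the proposition's hypothesis already assumes \eqref{eq:inter-psd-factorization} holds.
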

\begin{proof}
  Consider $(\tilde{\bold{x}} ,\tilde{\bold{c}})$ generated jointly according
  to the described steps. Then it follows from the law of iterated
  expectations that,
  $\mathbb{E}[f(\tilde{\bold{c}})] = \mathbb{E}[
  \mathbb{E}[f(\tilde{\bold{c}}) \vert \tilde{\bold{x}} ] = \sum_{\bold{x} \in
    \hat{\mathcal{X}}}\alpha_{\bold{x}}\mathbb{E}[f(\tilde{\bold{c}})
  \vert \tilde{\bold{x}}  = \bold{x}],$ for any function $f.$ As a result, we have
  from \eqref{eq:inter-psd-factorization} that for any $r \in [R\,],$
\begin{align}
  \mathbb{E}
\begin{bmatrix}
\bold{\tilde{c}}^r \\ 1
\end{bmatrix}\begin{bmatrix}
\bold{\tilde{c}}^r \\ 1
\end{bmatrix}'
 &= \sum_{\bold{x} \in \hat{\mathcal{X}}} \alpha_ {\bold{x}}
  \begin{bmatrix}
\bold{d}_r(\bold{x}^r)\bold{d}_r(\bold{x}^r)' + \bold{\Phi}_r &  \ \bold{d}_r({\bold{x}^r})\\
{\bold{d}_r(\bold{x}^r)}' & 1
 \end{bmatrix}  = \begin{bmatrix}
 \bold{\Pi}^r & \boldsymbol{\mu}^r\\
 {\boldsymbol{\mu}^r}' & 1
 \end{bmatrix}.
\label{eq:marg-feasibility}
\end{align}
Moreover, as $\tilde{\bold{x}}\in \mathcal{X},$ the objective
$\mathbb{E}[\max_{\bold{x} \in \mathcal{X}}\bold{\tilde{c}}'\bold{x}]$
satisfies,
\begin{align*}
Z^\ast  &\geq  \mathbb{E}\left[\max_{\bold{x} \in
  \mathcal{X}}\bold{\tilde{c}}'\bold{x} \right]
  \geq \mathbb{E}[\tilde{\bold{c}}'\tilde{\bold{x}}]
  = \mathbb{E}\left[ \mathbb{E}\left[\tilde{\bold{c}} \, \vert\,
    \tilde{\bold{x}} \right]' \tilde{\bold{x}}\right]  = \mathbb{E}\left[ \sum_{r=1}^R
    \mathbb{E}\left[\tilde{\bold{c}}^r \, \vert\,   \tilde{\bold{x}} \right]' \tilde{\bold{x}}^r \right] \\
  &= \mathbb{E}\left[ \sum_{r=1}^R   \bold{d}_r(\tilde{\bold{x}}^r)' \tilde{\bold{x}}^r  \right]
  = \sum_{\bold{x} \in \mathcal{X}} \alpha_{\bold{x}} \sum_{r=1}^{R}
    \bold{d}_{r}(\bold{x}^r)'\bold{x}^r = \sum_{r=1}^{R}
    trace(\hat{\bold{Y}}^r_*) = \hat{Z}^\ast = Z^\ast,
\end{align*}
where the last three equalities follow, respectively, from
\eqref{eq:wc-objective}, the optimality of
$\{\bold{p}_\ast, \bold{X}_\ast^r, \bold{Y}_\ast^r: r \in [R\,]\}$ for
\eqref{opt:SmallSDP}, and Theorem \ref{thm:main}. Combining this
observation with \eqref{eq:marg-feasibility}, we have that the
distribution of $\tilde{\bold{c}},$ denoted by $\theta^\ast,$
is feasible and it attains the maximum in \eqref{Moment-Problem}.
\hfill$\Box$
\end{proof}

\section{Polynomial-time solvable instances}
\label{sec:poly-time}
In this section, we identify efficient representations of the convex
hull constraint in \eqref{opt:SmallSDP} for three illustrative
applications. The common theme in these applications is that the
derived efficient characterizations, in turn, result in
polynomial-time solvable instances for the partial covariance based
distributionally robust formulation in \eqref{Moment-Problem}. As far
as we know, the example we consider in Section \ref{sec:app-sched} is the first example of such an
approach towards appointment scheduling that results in a
polynomial-time solvable tight reformulation in the presence of explicitly known correlation information between uncertain processing times of the jobs
to be scheduled.

\subsection{Appointment scheduling}
\label{sec:app-sched}

\subsubsection{Problem description}
In the presence of uncertainty in the processing durations of jobs
for a sequence of customers, the appointment scheduling problem
deals with identifying customer reporting times that minimize the
total amount of time spent by customers waiting for service after
arrival. As an example, consider $n$ patients who need to meet a
doctor. Let $\tilde{u}_i$ be the random service duration of patient
$i \in [n].$ Suppose that all patients arrive exactly at the
reporting time allotted to them.  If we let $s_i$ denote the duration
scheduled for patient $i,$ then the reporting time for patient $i$ is
$\sum_{j=1}^{i-1} s_j.$ 
We take the waiting time of the first patient to be zero. Then the
waiting time for patient $i,$ denoted by $w_i,$ satisfies the
well-known Lindley's recursion for the waiting time in single-server
queues:
\begin{equation}
  w_1 = 0, \quad w_i = \max(w_{i-1} + \tilde{u}_{i-1} - s_{i-1},0), \quad i =
  2,\ldots,n.
\label{eq:lindley-recursion}
\end{equation}

The total waiting time for all patients is the sum of waiting times
$\sum_{i=1}^{n} w_i$. The overtime of the doctor can be modeled as $w_{n+1} = \max(w_n + \tilde{u}_n - s_n, 0)$. Then the total waiting
time of the patients and the overtime of the doctor are cumulatively
captured by,
\begin{align}
\label{opt:det-large}
  f(\tilde{\bold{u}}, \bold{s}) &=
                                         \sum_{i=1}^n\max(w_i +
                                         \tilde{u}_i-s_i,
                                         0). 
\end{align}
Define
$S = \{ \bold{s} \in \mathbb{R}_+^n: s_1 + \ldots + s_n \leq T\},$
where $T$ is a positive upper time limit within which the schedules
should be fit.  It is then natural to seek a schedule sequence
$\bold{s} \in S$ that minimizes
$\mathbb{E}[f(\tilde{\bold{u}}, \bold{s})].$

The described setup is applicable to schedule appointments in various
situations where a single server processes the arriving jobs on a
first-come-first-serve basis. In settings where the jobs to be
processed are dependent and the joint distribution of their processing
times $\tilde{\bold{u}}$ is difficult to be fully specified, an
approach that has gained much attention over the last decade is to
seek distributionally robust schedules that minimize the worst
case waiting time,
$\sup_{\theta \in \mathcal{P}}
E_\theta[f(\tilde{\bold{u}},\bold{s})];$ here, the set $\mathcal{P}$
is taken to be the family of all probability distributions consistent
with the information known about the probability distribution of
$\tilde{\bold{u}}$. This problem was first studied in \cite{Kong2013} where complete information on the first moment $\boldsymbol{\mu}$ and  second moment matrix $\bold{\Pi}$ is assumed to be available on the service time durations. Building on the completely positive formulation in (\ref{opt:karthik-comp-pos-formulation}), the problem can be reformulated as:

\begin{equation}
\label{opt:cp-app-sched-formulation}
\begin{array}{rrlllll}
{Z}_{\text{app}}(\boldsymbol{\mu}, \bold{\Pi}, \bold{s})&= \displaystyle \max_{\bold{p}, \bold{X},
  \bold{Y}}
& & trace(\bold{Y}) - \bold{s}'\bold{p} \\
& \mbox{s.t} & &
\begin{bmatrix}
 1 & \boldsymbol{\mu}' & \bold{p}'  \\
\boldsymbol{\mu} & \bold{\Pi} & \bold{Y}'\\
   \bold{p}  & \bold{Y} & \bold{X}\\
\end{bmatrix}  \in {\cal C}(\mathbb{R}_{+} \times \mathbb{R}^{n} \times \mathbb{R}^{n}_{+}),
\\
& & & \displaystyle [\bold{A}, -\bold{I}_n]\bold{p} = \bold{b}, \\
& & & \displaystyle [\bold{a}'_i, -\bold{e}'_i] \bold{X}[\bold{a}'_i, -\bold{e}'_i]' = b_i^2, & \forall i \in [n].
\end{array}
\end{equation}
In the above formulation $\bold{p} \in \mathbb{R}^{2n}$, $\bold{Y} \in \mathbb{R}^{n \times 2n}$ and $\bold{X} \in \mathbb{R}^{2n \times 2n}$. The matrix $\bold{A}  \in \mathbb{R}^{n\times n}$ is such that
$A_{jj}=-1 \text{ for } j \in [n] $,
$A_{j+1,j}=1 \text{ for } j \in [n-1] $,
$ b_j= -1 \text{ for } j \in [n]$ and
$\bold{a}_i'$ indicates row $i$ of $\bold{A}$ and $\bold{e}_i \in \mathbb{R}^n$ denotes a column vector with $1$ at position $i$ and zero everywhere else. The above formulation provides the distributionally robust bound for a given schedule $\bold{s}$. The distributionally robust schedules may be obtained by optimizing the dual of the above formulation over the dual variables as well as $\bold{s}$ as follows:
\begin{equation}
\label{opt:cp-app-sched-formulation-dual}
 \begin{array}{rrllll}
& \underset{\substack{\bold{s}, \alpha,  \boldsymbol{\beta}, \bold{\Gamma, \gamma},\\\bold{u}, \bold{v}, \bold{\Lambda}, \boldsymbol{\eta}, \boldsymbol{\chi}}}{\text{min}}
& & trace(\bold{\Pi}'\bold{\Gamma}) + \boldsymbol{\mu}'  \boldsymbol{\beta} + \alpha+ \gamma \\
& \text{s.t} & &
\begin{pmatrix}
 \alpha + \sum\limits_{i=1}^n v_i - u_i  & \frac{\boldsymbol{\beta'}}{2} & \frac{\begin{pmatrix}
 \bold{s} + 2\boldsymbol{\eta} \\
 2\boldsymbol{\chi} \end{pmatrix}' - \sum\limits_{i=1}^n u_i \begin{pmatrix} \bold{a}_i \\ \bold{e}_i \end{pmatrix}'}{2} \\
 \\
\frac{\boldsymbol{\beta}}{2} & \bold{\Gamma} &  \begin{pmatrix}
 -0.5\bold{I}_n\\ \bold{O}_n\end{pmatrix}' \\
 \frac{\begin{pmatrix}
 \bold{s} + 2\boldsymbol{\eta} \\
 2\boldsymbol{\chi} \end{pmatrix} - \sum\limits_{i=1}^n u_i \begin{pmatrix} \bold{a}_i \\ \bold{e}_i \end{pmatrix}}{2}  & \begin{pmatrix}
 -0.5\bold{I}_n\\ \bold{O}_n
 \end{pmatrix} & -\sum\limits_{i=1}^n v_i \begin{pmatrix}
 \bold{a}_i \\ -\bold{e}_i
\end{pmatrix}\begin{pmatrix}
  \bold{a}_i \\ -\bold{e}_i
\end{pmatrix}' + \bold{\Lambda}  \\
   \end{pmatrix} \succeq 0 \\
 & & &
   \begin{pmatrix}
   \gamma & \begin{pmatrix}
   \boldsymbol{\eta} \\
   \boldsymbol{\chi}
   \end{pmatrix}' \\
    \begin{pmatrix}
   \boldsymbol{\eta} \\
   \boldsymbol{\chi}
   \end{pmatrix}  & \bold{\Lambda}  \end{pmatrix} \in  {\cal C^*}(\mathbb{R}_+^{2n+1})
 \end{array}
 \end{equation}
In the above formulation $\alpha, \gamma \in \mathbb{R}$, $\boldsymbol{\beta}, \bold{u}, \bold{v}, \boldsymbol{\eta}, \boldsymbol{\chi} \in \mathbb{R}^n$, $\bold{\Gamma} \in \mathbb{R}^{n\times n}$, $\bold{\Lambda} \in \mathbb{R}^{2n \times 2n}$. $\bold{I}_n$ and $\bold{O}_n$ denote the identity matrix and the square matrix of zeros respectively, both of size $n$.

\subsubsection{Polynomial-time solvable instance}
To illustrate the applicability of Theorem \ref{thm:main} in this
context, suppose that the number of patients, $n,$ is even without loss
of generality, and the mean of service times $\tilde{\bold{u}}$ is
fully specified, and only the entries,
$\{\Pi_{ii}, \Pi_{j,j+1}, \Pi_{j+1,j}: i \in [n], j \in
\{1,3,\ldots,n-1\}\}$ of the second moment matrix,
$\bold{\Pi} = [\Pi_{ij}],$ are specified. This corresponds to knowing the correlations among service time durations of adjoining patients. Recalling the definition of
the partition $\{\mathcal{N}_r: r \in [R\,]\}$ of $[n],$ this partial
specification of the second moments corresponds to the
scenario where,
\begin{align}
  R = n/2,  \quad \text{ and } \quad   \mathcal{N}_r = \{2r-1, 2r\}, \text{ for
  } r =  1,\ldots, R.
\label{eq:partition-app-sched}
\end{align}
For any given schedule
$\bold{s} \in S,$ consider the worst-case expected total waiting time,
\begin{align}
  \label{eq:app-sched-mom-prob}
  Z^\ast_{app}(\bold{s}) = \sup\big\{
  \mathbb{E}_\theta\left[f(\tilde{\bold{u}},\bold{s})\right] : \
  &\mathbb{E}_\theta\left[\tilde{u}_i
    \right] =\mu_i, \mathbb{E}_\theta[\tilde{u}_i^2] =\Pi_{ii}, \text{ for }
    i \in [n], \nonumber\\
  & \mathbb{E}_\theta[\tilde{u}_j \tilde{u}_{j+1}]
    =\Pi_{j,j+1}, \text{  for } j \in \{1,3,...n-1\} \big\}.
\end{align}

Our key result is that by an appropriate application of Theorem~\ref{thm:main}, we obtain a polynomial-time
solvable formulation
for evaluating $Z^\ast_{app}(\bold{s})$ in Theorem
\ref{thm:SmallSDP-app-sched}.

\begin{theorem}
 \label{thm:SmallSDP-app-sched}
 Given a schedule $\bold{s} \in S,$ suppose that
 $Z^\ast_{app}(\bold{s})$ is defined as in
 \eqref{eq:app-sched-mom-prob}. Then,
 \begin{equation*}
\label{opt:SmallSDP-app-sched}
 \begin{array}{rrllll}
   Z^*_{app}(\bold{s}) =& \displaystyle \max_{p_i, X_{ij}, Y_{ij}, t_{kj}} & \displaystyle
   \sum_{i=1}^n (Y_{ii} - s_ip_i)\\
   &\mbox{s.t.} &
\displaystyle\begin{bmatrix}
 1 & \mu_i & \mu_{i+1} & p_i &p_{i+1}   \\
 \mu_i  & \Pi_{ii} &   \Pi_{i,i+1}& Y_{ii} & Y_{i,i+1} \\
\mu_{i+1} & \Pi_{i,i+1} &  \Pi_{i+1,i+1}& Y_{i+1,i} &  Y_{i+1,i+1} \\
  p_i & Y_{ii}&  Y_{i+1,i} & X_{ii} & X_{i,i+1}\\
     p_{i+1} & Y_{i,i+1}& Y_{i+1,i+1} & X_{i,i+1} & X_{i+1,i+1}\\
   \end{bmatrix} \succeq 0, \quad \text{ for } i \text{ odd, } i \in [n],  \\
   & & p_i = \sum\limits_{k=1}^i \sum\limits_{j=i}^{n+1}
   t_{kj} (j-i), \quad \text{ for } i \in [n],\\
   & &X_{ii} = \sum\limits_{k=1}^i \sum\limits_{j=i}^{n+1}
   t_{kj} (j-i)^2,  \quad \text{ for }  i \in [n],\\
   & &X_{i,i+1} = X_{i+1,i} = \sum\limits_{k=1}^i
   \sum\limits_{j={i+1}}^{n+1} t_{kj} (j-i)(j-(i+1)), \text{ for }
i \text{ odd, } i \in [n],\\
   & &\sum\limits_{k=1}^i \sum\limits_{j=i}^{n+1} t_{kj}
   = 1, \quad \text{ for } i \in [n], \\
   & & t_{kj} \geq 0, \quad \text{ for } 1 \leq k \leq j
   \leq n+1.
\end{array}
\end{equation*}
 \end{theorem}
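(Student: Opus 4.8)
Here is how I would prove \Cref{thm:SmallSDP-app-sched}.

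\medskip

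The plan is to derive \Cref{thm:SmallSDP-app-sched} from \Cref{thm:main}, once the objective $f(\tilde{\bold u},\bold s)$ is recast as a linear maximization over a fixed polytope and the projected convex hull appearing in \eqref{opt:SmallSDP} is made explicit for that polytope. First, unrolling Lindley's recursion \eqref{eq:lindley-recursion} gives $w_{i+1}=\max_{1\le k\le i+1}\sum_{l=k}^{i}(\tilde u_l-s_l)$ (empty sum $=0$), so $f(\tilde{\bold u},\bold s)=\sum_{i=1}^n w_{i+1}$ equals the value of the linear program $\min\{\bold{1}'\bold w: w_{i+1}-w_i\ge \tilde u_i-s_i,\ w_{i+1}\ge 0,\ i\in[n],\ w_1=0\}$; dualizing this (feasible, bounded) LP yields $f(\tilde{\bold u},\bold s)=\max_{\bold x\in P}(\tilde{\bold u}-\bold s)'\bold x$, where $P=\{\bold x\ge 0: x_i-x_{i+1}\le 1,\ i\in[n],\ x_{n+1}:=0\}$. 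The constraint matrix of $P$ is an interval matrix, hence totally unimodular, so $P$ is integral and the maximum is attained at a vertex. I will take $\mathcal X$ to be the vertex set of $P$; as noted below it is the image under a linear map of a bounded mixed-integer linear feasible region, so \Cref{thm:main} is applicable, and $f(\tilde{\bold u},\bold s)=\max_{\bold x\in\mathcal X}(\tilde{\bold u}-\bold s)'\bold x$.

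\medskip

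With this, $Z^\ast_{app}(\bold s)$ is an instance of \eqref{Moment-Problem} for the random vector $\tilde{\bold c}=\tilde{\bold u}-\bold s$ under the partition $\mathcal N_r=\{2r-1,2r\}$, $R=n/2$, whose mean is $\boldsymbol\mu-\bold s$ and whose block second moments are $\bold\Pi^r-\bold s^r(\boldsymbol\mu^r)'-\boldsymbol\mu^r(\bold s^r)'+\bold s^r(\bold s^r)'$ (the assumption $\bold\Pi^r\succ\boldsymbol\mu^r(\boldsymbol\mu^r)'$ of \Cref{thm:main} is preserved, as a shift does not change the covariance). Applying \Cref{thm:main} produces a reduced SDP with PSD blocks of size $n_r+n_r+1=5$; a congruence transformation on each block --- the one accounting for the shift $\tilde{\bold c}=\tilde{\bold u}-\bold s$ --- rewrites it in terms of $\boldsymbol\mu$ and $\bold\Pi^r$ exactly as displayed in \Cref{thm:SmallSDP-app-sched}, and the matching substitution $\tilde{\bold Y}^r=\bold Y^r-\bold s^r(\bold p^r)'$ turns the objective $\sum_r\operatorname{trace}(\tilde{\bold Y}^r)$ into $\sum_{i=1}^n(Y_{ii}-s_i p_i)$. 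What remains --- the crux --- is to show that, after projecting onto the coordinates $(\bold p,(X_{ii},X_{i,i+1},X_{i+1,i+1})_{i\text{ odd}})$ that actually enter \eqref{opt:SmallSDP}, the set $\operatorname{conv}\{(\bold x,\bold x^1(\bold x^1)',\ldots,\bold x^R(\bold x^R)'):\bold x\in\mathcal X\}$ equals the polyhedron cut out by the $t_{kj}$-constraints of \Cref{thm:SmallSDP-app-sched}.

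\medskip

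The combinatorial ingredient is that $\bold x$ is a vertex of $P$ precisely when there is a partition of $\{1,\ldots,n+1\}$ into consecutive blocks $[k_1,j_1],\ldots,[k_m,j_m]$ ($k_1=1$, $k_{t+1}=j_t+1$, $j_m=n+1$) with $x_i=j_t-i$ whenever $i$ lies in the $t$-th block --- the ``only if'' direction using that at a vertex $x_i>0$ forces $x_i-x_{i+1}\le 1$ to be tight. Introducing a variable $t_{kj}$ for each interval $[k,j]$, $1\le k\le j\le n+1$, any point of $\operatorname{conv}\{\ldots\}$ comes from a distribution over such partitions, and setting $t_{kj}=\Pr([k,j]\text{ is a block})$ gives $\sum_{k\le i\le j}t_{kj}=1$ (exactly one block contains $i$), while linearity yields $p_i=\sum_{k\le i\le j}t_{kj}(j-i)$, $X_{ii}=\sum_{k\le i\le j}t_{kj}(j-i)^2$ and $X_{i,i+1}=\sum_{k\le i,\,j\ge i+1}t_{kj}(j-i)(j-i-1)$ --- the last identity using that $x_ix_{i+1}=0$ unless $i$ and $i+1$ lie in a common block (otherwise $i$ is the right endpoint of its block and $x_i=0$). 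This establishes the inclusion ``$\subseteq$''. For ``$\supseteq$'', view a feasible $\bold t$ as a nonnegative flow on the acyclic network with node set $\{0,1,\ldots,n+1\}$ in which $t_{kj}$ sits on the arc $k-1\to j$: the covering equalities say the flow crossing the cut between $\{0,\ldots,i-1\}$ and the rest is $1$ for every $i\in[n]$, which forces flow conservation at nodes $1,\ldots,n-1$ and unit outflow from node $0$; the single variable $t_{n+1,n+1}$ --- which appears in no covering equality and in none of $\bold p,\bold X$ --- can be set to the inflow at node $n$ to restore conservation there, so $\bold t$ becomes a unit flow. Decomposing it into paths from $0$ to $n+1$ writes $\bold t$ as a convex combination of partition-incidence vectors, and the identities above show the corresponding convex combination of lifted vertices equals $(\bold p,\bold X)$, so $(\bold p,\bold X)$ lies in the projected convex hull.

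\medskip

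Combining the two inclusions with the reduction above proves the theorem; the resulting program has $O(n^2)$ variables and constraints and $O(n)$ semidefinite blocks of the fixed size $5$, hence is polynomial-time solvable. The main obstacle is the ``$\supseteq$'' direction: identifying the covering constraints with flow conservation on the interval network and appealing to flow decomposition is what certifies that every feasible $\bold t$ genuinely arises from a distribution supported on $\mathcal X$, and hence from an admissible competitor in \eqref{opt:SmallSDP}. A smaller point that needs care is supplying \Cref{thm:main} with the vertex set of $P$ (equivalently, the interval-partition values) rather than $P$ itself or all of its integer points, since only for that choice does the projected convex hull coincide exactly with the $t_{kj}$-system.
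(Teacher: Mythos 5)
Your proposal is correct and mirrors the paper's own proof: the same LP-duality recasting of $f(\tilde{\bold{u}},\bold{s})$ as a maximization over $\mathcal{X}_{app}$, the same interval-partition characterization of the extreme points, the same application of Theorem \ref{thm:main} with the shift $\tilde{\bold{c}}=\tilde{\bold{u}}-\bold{s}$ absorbed by a change of variables and Schur-complement step, and the same linear identities for $x_i^2$ and the adjacent cross terms $x_ix_{i+1}$ in the $T_{kj}$ variables (Proposition \ref{prop:capp-char}). The only divergence is that where the paper disposes of the final ``relaxation equals convex hull'' step by citing total unimodularity of the interval constraint matrix, you give an explicit flow-decomposition argument on the interval network (including the correct observation that $t_{n+1,n+1}$ is free and restores conservation at node $n$) --- an equivalent, slightly more constructive justification of the same fact.
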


 The proof of Theorem \ref{thm:SmallSDP-app-sched} is presented in
 Section \ref{sec:proof-thm-app-sched}. As demonstrated in Corollary
 \ref{cor-schedules-dual} below, an optimal schedule that minimizes the
 worst-case total expected waiting time can be obtained by considering
 the dual minimization problem of the semidefinite program in Theorem
 \ref{thm:SmallSDP-app-sched}.

\begin{corollary}
  Given $T > 0,$ a schedule
  $s \in S = \{ \bold{s} \in \mathbb{R}_+^n: s_1 + \ldots + s_n
  \leq T\}$ that minimizes $Z_{app}^\ast(\bold{s})$ can be obtained by
  solving the following semidefinite program:
  \begin{equation*}
\label{opt:optimal_schedule}
 \begin{aligned}
   Z_{app}^\ast &= \min\limits_{\bold{s}, \boldsymbol{\eta},
     \boldsymbol{\beta}, \bold{\Gamma},\boldsymbol{\rho},
     \boldsymbol{\delta}, \boldsymbol{\tau}, \boldsymbol{\gamma}}
   \sum_{\substack{i \in [n],\\i \,\text{odd}}}\eta_i +
   \sum_{i=1}^n\beta_i\mu_i + \sum_{\substack{i \in [n],\\i \,\text{odd}}}
   \sum_{k,l \in \{i,i+1\}} \hspace{-10pt}\Gamma_{kl}\Pi_{kl} + \sum_{i=1}^{n+1} \rho_i \\
   &\text{s.t }\quad
   \begin{bmatrix}
 2\eta_i & {\beta_i} & {\beta_{i+1}}& {\delta_i + s_i} & {\delta_{i+1} + s_{i+1}} \\
  {\beta_i} &  2\Gamma_{ii} & {\Gamma_{i,i+1}} & -{1} & 0 \\
    {\beta_{i+1}} & {\Gamma_{i,i+1}} & 2\Gamma_{i+1,i+1} & 0 & -{1} \\
    {\delta_i + s_i} & -{1} & 0  & 2\gamma_i & {\tau_i} \\
    {\delta_{i+1} + s_{i+1}} & 0   & -{1} & {\tau_i} & 2\gamma_{i+1} \\
  \end{bmatrix} \succeq 0, \quad \text{ for } i \text{ odd}, i \in
  [n],\\
  &\quad\quad \sum_{i=k}^j \rho_i \geq \sum_{i=k}^{\min\{j,n\}}
  \hspace{-10pt}\delta_i (j-i) +
  \sum_{i=k}^{\min\{j,n\}}\hspace{-10pt} \gamma_i (j-i)^2 +
  \sum\limits_{\substack{i=k \\ i \text{
        odd}}}^{\min\{j,n\}}\hspace{-10pt} \tau_i
  (j-i) (j- (i+1)),\\
  &\hspace{220pt} \text{ for }1 \leq k \leq j \leq n+1,\\
  &\quad\quad \sum_{i=1}^n s_i \leq T, \quad s_i \geq 0, \text{ for }
  i \in [n].
 \end{aligned}
 \end{equation*}
\label{cor-schedules-dual}
\end{corollary}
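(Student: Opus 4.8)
The idea is to realize the schedule‑optimization program as the ``dual of the value SDP,'' using the fact that in Theorem~\ref{thm:SmallSDP-app-sched} the schedule $\bold{s}$ enters only affinely and only through the objective (via the terms $-s_ip_i$).

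\emph{Step 1: Dualize the value SDP for fixed $\bold{s}$.} Fix $\bold{s}\in S$. The program in Theorem~\ref{thm:SmallSDP-app-sched} is a conic linear program: its cone is a product of copies of $\mathcal{S}_+^{5}$ (one block psd constraint per odd index $i$) with a nonnegative orthant (the variables $t_{kj}\ge 0$) and a free part. I would write down its conic dual in the standard way: a $5\times 5$ psd multiplier $\bold{W}_i\succeq 0$ for each block psd constraint, a scalar $\delta_i$ for each equality defining $p_i$, a scalar $\gamma_i$ for each equality defining $X_{ii}$, a scalar $\tau_i$ for each equality defining $X_{i,i+1}$ (for $i$ odd), scalars $\rho_i$ for the normalization equalities $\sum_{k\le i}\sum_{j\ge i}t_{kj}=1$, and I would keep the objective coefficients $1$ on each $Y_{ii}$ and $-s_i$ on each $p_i$ bookkept. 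The standing assumption $\bold{\Pi}^r\succ\boldsymbol{\mu}^r{\boldsymbol{\mu}^r}'$ (already invoked earlier in the paper) provides strict feasibility, so strong duality holds and the dual optimum is attained; hence the dual minimization problem, call it $D(\bold{s})$, has optimal value exactly $Z^\ast_{app}(\bold{s})$.

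\emph{Step 2: Read off $D(\bold{s})$ and match it to \Cref{cor-schedules-dual}.} Matching the \emph{variable} entries of each $5\times5$ block of Theorem~\ref{thm:SmallSDP-app-sched} against the multipliers forces $\bold{W}_i$ to take exactly the displayed form in Corollary~\ref{cor-schedules-dual}: its leading $3\times3$ block is $\begin{bmatrix}2\eta_i&\beta_i&\beta_{i+1}\\\beta_i&2\Gamma_{ii}&\Gamma_{i,i+1}\\\beta_{i+1}&\Gamma_{i,i+1}&2\Gamma_{i+1,i+1}\end{bmatrix}$, so that the inner product of $\bold{W}_i$ with the constant block $\begin{bmatrix}1&\mu_i&\mu_{i+1}\\ \mu_i&\Pi_{ii}&\Pi_{i,i+1}\\ \mu_{i+1}&\Pi_{i,i+1}&\Pi_{i+1,i+1}\end{bmatrix}$ produces the objective terms $\eta_i+\beta_i\mu_i+\beta_{i+1}\mu_{i+1}+\sum_{k,l\in\{i,i+1\}}\Gamma_{kl}\Pi_{kl}$; the entries of $\bold{W}_i$ opposite $Y_{ii}$ and $Y_{i+1,i+1}$ are pinned to $-1$ by the objective coefficients on those variables; the entries opposite $p_i,p_{i+1}$ become $\delta_i+s_i,\ \delta_{i+1}+s_{i+1}$ (the $s_i$ appearing precisely because $p_i$ has objective coefficient $-s_i$); and the block opposite $X_{ii},X_{i,i+1},X_{i+1,i+1}$ is $\begin{bmatrix}2\gamma_i&\tau_i\\\tau_i&2\gamma_{i+1}\end{bmatrix}$. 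The dual inequality attached to the sign‑constrained variable $t_{kj}\ge0$ is obtained by collecting every coefficient with which $t_{kj}$ appears among the defining equalities of $p_i$, $X_{ii}$, $X_{i,i+1}$ and in the normalization, namely $(j-i)$, $(j-i)^2$, $(j-i)(j-(i+1))$ (the last only when $i$ is odd) and $1$, summed over the admissible range $k\le i\le\min\{j,n\}$; requiring the reduced cost to be nonnegative yields exactly $\sum_{i=k}^{j}\rho_i\ \ge\ \sum_{i=k}^{\min\{j,n\}}\delta_i(j-i)+\sum_{i=k}^{\min\{j,n\}}\gamma_i(j-i)^2+\sum_{i=k,\ i\text{ odd}}^{\min\{j,n\}}\tau_i(j-i)(j-(i+1))$ for all $1\le k\le j\le n+1$. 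Thus $D(\bold{s})$ coincides with the SDP in Corollary~\ref{cor-schedules-dual} with $\bold{s}$ held fixed and the constraints $\sum_i s_i\le T,\ s_i\ge 0$ removed.

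\emph{Step 3: Minimize over $\bold{s}$.} Since $Z^\ast_{app}(\bold{s})$ equals the optimal value of $D(\bold{s})$ for every $\bold{s}\in S$, we get $\min_{\bold{s}\in S}Z^\ast_{app}(\bold{s})=\min_{\bold{s}\in S}\ \min\{\text{objective of }D(\bold{s})\}$, and the two nested minimizations collapse into the single SDP of Corollary~\ref{cor-schedules-dual}, in which $\bold{s}$ is promoted to a decision variable subject to $\bold{s}\in S$. Because $S$ is compact and $Z^\ast_{app}(\cdot)$ is lower semicontinuous on $S$ (being a supremum of the continuous functions $\bold{s}\mapsto\mathbb{E}_\theta[f(\tilde{\bold{u}},\bold{s})]$) while the inner dual optimum is attained for each $\bold{s}$, the combined SDP has the same optimal value $Z^\ast_{app}=\min_{\bold{s}\in S}Z^\ast_{app}(\bold{s})$ and any optimizer's $\bold{s}$‑component is an optimal schedule. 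The only delicate part is the coefficient/sign bookkeeping in Step~2 --- in particular, correctly intersecting the index ranges of the four families of equalities, which is what produces the $\min\{j,n\}$ truncations and the parity restriction in the $\tau$‑sum --- together with checking strict feasibility so that strong duality and dual attainment hold uniformly in $\bold{s}$; this is where the argument needs care.
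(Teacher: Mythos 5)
Your proposal is correct and follows essentially the same route as the paper: dualize the semidefinite program of Theorem \ref{thm:SmallSDP-app-sched} for fixed $\bold{s}$, invoke strong duality (the paper justifies this both via the standing assumption $\bold{\Pi}^r \succ \boldsymbol{\mu}^r{\boldsymbol{\mu}^r}'$ and by exhibiting an explicit interior feasible point of the dual), and then collapse the nested minimizations over $\bold{s}\in S$ and the dual variables into the single SDP of the corollary. The paper's own proof is two lines and omits the coefficient bookkeeping you carry out in Step~2, so your write-up is, if anything, more detailed than the published argument.
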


\begin{proof}
  The result follows by performing a joint minimization over
  $\bold{s} \in S$ and the objective of the dual of the semidefinite
  program in Theorem \ref{thm:SmallSDP-app-sched}. This is because,
  for any $\bold{s} \in S,$ the value of the semidefinite program in
  Theorem \ref{thm:SmallSDP-app-sched} is equal to that of its dual
  minimization problem. Indeed, the existence of an interior feasible
  point for the dual problem can be exhibited as follows. Given
  $\bold{s} \in S,$ set all the variables other than
  $\eta_i, \gamma_i,\gamma_{i+1},\Gamma_{ii}, \Gamma_{i+1,i+1},$ for
  $i$ odd, to zero, and let $\Gamma_{ii}\gamma_i > 1/4,$
  $\rho_i > \gamma_i(n+1-i)^2,$ for every $i \in [n];$ fix $\eta_i$,
  for $i$ odd, to be arbitrarily positive; this assignment results in
  a dual feasible solution where none of the constraints are active.
  Moreover, the requirement that
  $\bold{\Pi}^r - \boldsymbol{\mu}^r {\boldsymbol{\mu}^r}' \succ 0,$
  for every $r \in [R\,]$ is sufficient to guarantee strong duality.
\end{proof}

\subsubsection{A proof of Theorem \ref{thm:SmallSDP-app-sched}}
\label{sec:proof-thm-app-sched}
\textbf{Step 1: Recasting the waiting time
$f(\tilde{\bold{u}},\bold{s})$ in  the form of \eqref{eq:inner-obj}.}
Given a fixed sequence of schedules $\bold{s} = (s_1,\ldots,s_n),$ 
the recursive structure in \eqref{eq:lindley-recursion} allows writing
the total waiting time, $f(\tilde{\bold{u}},\bold{s}),$ as the value
of the following linear program:
\begin{equation*}
\begin{array}{rrlllll}
  & \displaystyle \min_{\bold{w}} & & \displaystyle \sum_{i=1}^{n+1} w_i \\
  & \displaystyle \mbox{s.t} & & \displaystyle w_i \geq w_{i-1} + \tilde{u}_{i-1} - s_{i-1},
  &\text{for } i= 2, \ldots, n+1,\\
  & & & w_i \geq 0, & \displaystyle \text{for } i = 1,\ldots,n+1.
\end{array}
\end{equation*}
Define $\tilde{\bold{c}}(\bold{s}) := \tilde{\bold{u}} - \bold{s}.$ The dual of this linear
program results in,
\begin{equation}
\label{opt:det_app_sched}
\begin{array}{rrlllll}
  f(\tilde{\bold{u}},\bold{s})= &\displaystyle \max_{\bold{x}} &
  \tilde{\bold{c}}(\bold{s})'\bold{x} \\
  &\mbox{s.t.} & x_i - x_{i-1} \geq -1, &\text{for } i= 2, \ldots, n-1, \\
  & & x_n \leq 1, \\
  & & x_i \geq 0, &\text{for } i=1,\ldots,n,
\end{array}
\end{equation}

The constraints in \eqref{opt:det_app_sched} are such that any subset
of $n$ active constraints satisfy, for every $i \in [n],$
either $x_i = 0$ or $x_{i-1} = x_i + 1.$ It has been shown in
\cite{Zangwill1966,Zangwill1969} that any $\bold{x}= (x_1,\ldots,x_n)$
with this special structure can be uniquely represented as a partition
of intervals of integers in $\{ 1, \ldots, n+1\}.$ This structure was first used in \cite{Mak2015} to identify a tractable instance of appointment scheduling with mean-variance information and to the case with no-shows in \cite{Jiang2017}. Lemma
\ref{lem:extr-pts-X-app-sched} below exploits this representation to
characterize the extreme points of the feasible region to
\eqref{opt:det_app_sched}. For completeness, we provide the proof here.

\begin{lemma}
  The extreme points of the feasible region in
  \eqref{opt:det_app_sched} is given by,
\begin{align}
  \label{eq:app-sched-polytope}
  \mathcal{X}_{app} = \bigg\{\bold{x} \in \mathbb{R}_+^n:
  x_i =   \sum_{k=1}^i&\sum_{j=i}^{n+1} T_{kj}(j-i), \text{ for } i \in
    [n], \sum_{k=1}^i\sum_{j=i}^{n+1}T_{kj} = 1, \text{ for } i \in
    [n],   \nonumber\\
  &T_{kj} \in \{0,1\}, \text{ for } 1 \leq k \leq j
    \leq n+1 \bigg\}.
\end{align}
\label{lem:extr-pts-X-app-sched}
\end{lemma}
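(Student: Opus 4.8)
The plan is to prove that the extreme points of the polyhedron defined by the constraints in \eqref{opt:det_app_sched} are exactly the points of $\mathcal{X}_{app}$, by establishing inclusion in both directions via the structural observation already noted in the text: at any vertex, the $n$ active constraints force, for each $i$, either $x_i = 0$ or $x_{i-1} = x_i + 1$ (with the boundary conventions $x_0 := 1$ coming from $x_1 \le 1$, wait — more carefully, $x_n \le 1$ and $x_i - x_{i-1} \ge -1$). I would first make precise the claim from \cite{Zangwill1966,Zangwill1969} that a $0/1$-type solution with this ``reset or decrement'' structure corresponds bijectively to a partition of $\{1,\dots,n+1\}$ into intervals. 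Concretely, given such an $\bold{x}$, reading the indices $1,\dots,n$ in order, the positions where $x_i = 0$ act as breakpoints: between two consecutive breakpoints the sequence decreases by exactly $1$ at each step, so on a maximal block the values are determined by the block's right endpoint. Encoding each maximal interval of integers in $\{1,\dots,n+1\}$ as a pair $(k,j)$ with $T_{kj} = 1$ (and all other $T_{kj} = 0$) gives precisely $x_i = \sum_{k=1}^{i}\sum_{j=i}^{n+1} T_{kj}(j-i)$, since the unique interval containing $i$ is the one with $k \le i \le j$, contributing $j - i$. The normalization $\sum_{k=1}^{i}\sum_{j=i}^{n+1} T_{kj} = 1$ is exactly the statement that each index $i$ lies in precisely one interval of the partition.

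The first direction (every extreme point lies in $\mathcal{X}_{app}$) would proceed as follows. Let $\bold{x}$ be a vertex of the feasible region in \eqref{opt:det_app_sched}; since the region lies in $\mathbb{R}^n_+$ and is bounded (each $x_i \le x_n + (n-i) \le 1 + (n-i)$ by telescoping the constraints $x_i - x_{i-1} \ge -1$ together with $x_n \le 1$, and $x_i \ge 0$), $\bold{x}$ is the unique solution of a subsystem of $n$ linearly independent active constraints. I would argue that the active constraints, because of the network/interval structure of the constraint matrix, force each $x_i$ to be an integer, and in fact each $x_i$ equals either $0$ or $x_{i-1} - 1$; a clean way is to note the constraint matrix is an interval matrix (consecutive-ones / network matrix), hence totally unimodular, so all vertices are integral, and then to check directly that integrality plus feasibility plus the vertex property yields the reset-or-decrement pattern. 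Once that pattern is in hand, the partition-into-intervals encoding above produces the $T_{kj} \in \{0,1\}$ witnessing membership in $\mathcal{X}_{app}$.

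For the reverse direction (every point of $\mathcal{X}_{app}$ is an extreme point), I would take any $\{0,1\}$ assignment of $T_{kj}$ with $\sum_{k=1}^i\sum_{j=i}^{n+1} T_{kj} = 1$ for all $i$, observe this is precisely a partition of $\{1,\dots,n+1\}$ into integer intervals, form the corresponding $\bold{x}$, verify it is feasible for \eqref{opt:det_app_sched} (the constraint $x_i - x_{i-1} \ge -1$ holds with equality inside a block and is slack at a block boundary; $x_n \le 1$ holds because the block containing $n$ has right endpoint at most $n+1$; nonnegativity is clear), and then exhibit $n$ linearly independent active constraints at $\bold{x}$. At each $i$: if $i$ starts a new block then $x_i = 0$ is active (or, if $i = n$ and the last block is the singleton $\{n\}$ meaning the interval is $(n,n)$... actually the pair is $(n,n+1)$ or $(n+1,n+1)$; I'd handle $x_n \le 1$ as the active constraint indexed by $i=n$ when the block containing $n$ has right endpoint $n+1$), otherwise $x_i = x_{i-1} - 1$ is active; these $n$ constraints, one per index $i$, are easily seen to be linearly independent (each introduces a new variable in a triangular fashion), so $\bold{x}$ is a vertex.

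The main obstacle I anticipate is not the combinatorial encoding — that is the established Zangwill observation — but rather pinning down cleanly and rigorously why the active-constraint subsystem at a vertex must produce exactly the reset-or-decrement dichotomy at \emph{every} index $i$, including correct bookkeeping at the two boundaries ($i=1$, where there is no constraint $x_1 - x_0 \ge -1$, and $i=n$, where $x_n \le 1$ plays the role of the ``top''). Invoking total unimodularity of the (consecutive-ones) constraint matrix to get integrality up front is the shortcut that makes this manageable; the remaining work is then just a finite case check that an integral vertex of this particular polyhedron has the interval-partition form, which is routine but needs care at the endpoints.
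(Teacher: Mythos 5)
Your proposal is correct and follows essentially the same route as the paper: both rest on the observation that a feasible point of \eqref{opt:det_app_sched} is a vertex precisely when, for each index $i$, either the nonnegativity constraint or the adjacent difference constraint is tight, and both then encode such points as interval partitions of $\{1,\ldots,n+1\}$ via the indicator variables $T_{kj}$. The only difference is that you flesh out the vertex characterization (total unimodularity of the consecutive-ones constraint matrix for integrality, plus explicit linear independence of the active constraints), whereas the paper simply asserts this dichotomy as an observation with a citation to Zangwill.
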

\begin{proof}
  Recall our observation on the constraints in
  \eqref{opt:det_app_sched} that any subset of $n$ active constraints
  must have that, for every $i \in [n],$ either $x_i = 0$ or
  $x_{i-1} = x_i + 1.$ Therefore, any $\bold{x}$ in the feasible
  region to \eqref{opt:det_app_sched} is an extreme point if and only
  if either $x_i = 0$ or $x_{i-1} = x_i + 1,$ for every $i \in [n].$


  Now, for an extreme point $\bold{x} = (x_1,\ldots,x_n),$ let
  $I_{\bold{x}}$ be the unique partition of intervals of integers
  $\{1,2,\ldots,n, n+1\}$ such that the interval
  $[k,j] := \{k, k+1, \ldots, j\},$ for $k \leq j,$ belongs to the
  partition $I_{\bold{x}}$ if and only if
  $x_j = 0, x_{j-1}=1,\ldots,x_k = j-k.$ Thus there exists a bijection
  between the extreme points of the feasible region to
  \eqref{opt:det_app_sched} and the collection of partitions of
  integer intervals of $\{1,2,\ldots,n+1\}.$ For illustration, if
  $n = 3$ and $\bold{x} = (0,0,0)$ then
  $I_{(0,0,0)} =\{ [1], [2] , [3], [4] \};$ likewise, the points
  $(3,2,1), (1,0,1),$ are identified with their respective partitions
  given by, $I_{(3,2,1)}= \{[1, 4]\}$ and
  $I_{(1,0,1)} = \{ [1,2], [3,4] \},$ and vice versa.

  Next, for any extreme point $\bold{x}$ (whose unique interval
  partition representation is $I_{\bold{x}}$), consider the following
  assignment of values to the variables
  $(T_{kj}: 1 \leq k \leq j \leq n +1):$
  \begin{align}
    T_{kj} =
    \begin{cases}
      1 \quad\quad&\text{ if the integer interval } [k,j] \in
      I_{\bold{x}},\\
      0 &\text{ otherwise}.
    \end{cases}
    \label{defn-Tkj}
  \end{align}
  It follows from the very construction of the interval partition
  $I_{\bold{x}}$ that that only one of $\{T_{kj}: k \leq i \leq j\}$
  equals 1, for every $i \in [n],$ and
  $x_i = \sum_{k=1}^i \sum_{j=i}^{n+1} T_{kj}(j-i).$ Therefore any
  extreme point of the feasible region of \eqref{opt:det_app_sched}
  lies in $\mathcal{X}_{app}.$

  \sloppy{On the other hand, for any $\bold{x} = (x_1,\ldots,x_n)$ in
  $\mathcal{X}_{app},$ we have
  $x_i = \sum_{k=1}^i \sum_{j=i}^{n+1} T_{kj}(j-i)$ satisfying,}
    \begin{align*}
      x_i =
      \begin{cases}
        0 \quad\quad &\text{ if } T_{ki} = 1 \text{ for
          some } k \leq i,\\
       1  &\text{ if } i = n \text{ and } T_{kn} = 0,\\
       x_{i+1} + 1 &\text{ otherwise,}
      \end{cases}
    \end{align*}
    for every $i \in [n].$ Here we have again used the observation
    that for any given assignment of variables $T_{kj} \in \{0,1\}$
    satisfying
    $\sum_{k=1}^i\sum_{j=i}^{n+1}T_{kj} = 1, \text{ for every } i \in
    [n],$ only one of $\{T_{kj}: k \leq i \leq j\}$ equals one. Since
    any $\bold{x} \in \mathcal{X}_{app}$ satisfies $x_i = 0$ or
    $x_{i-1} = x_i + 1$ for every $i \in [n],$ we arrive at the
    conclusion that $\mathcal{X}_{app}$ is indeed the set of extreme
    points of the feasible region to
    \eqref{opt:det_app_sched}. \hfill$\Box$
\end{proof}
As the feasible region to the linear program in
\eqref{opt:det_app_sched} is bounded, there exists an extreme point at
which attains the maximum is attained. Then as a consequence of Lemma
\ref{lem:extr-pts-X-app-sched}, we have that
\begin{align}
  \label{eq:step1-outcome}
  f(\tilde{\bold{u}},\bold{s}) = \max\left\{
  \tilde{\bold{c}}(\bold{s})'\bold{x}: \bold{x} \in
  \mathcal{X}_{app}\right\}.
\end{align}

\noindent \textbf{Step 2: Application of Theorem \ref{thm:main}.}  For the
partition $\{\mathcal{N}_r: r \in [R]\}$ specified in
\eqref{eq:partition-app-sched}, we use
\eqref{eq:step1-outcome} to express $Z_{app}^\ast(\bold{s})$ as,
\begin{align*}
  \sup\left\{ \mathbb{E}_{\theta}\left[
  \max_{\bold{x} \in \mathcal{X}_{app}} \tilde{\bold{c}}(\bold{s})'\bold{x}
  \right]:
  \mathbb{E}_\theta[\tilde{\bold{c}}(\bold{s})] =
  \boldsymbol{\mu} - \bold{s},
  \mathbb{E}_{\theta}\left[\tilde{\bold{c}}(\bold{s})^r
  {\tilde{\bold{c}}(\bold{s})^r}'\right] = \bold{\Pi}^r_{\bold{s}} \ \forall r \in [R\,], \theta
  \in \mathcal{P}(\mathbb{R}^n)
  \right\},
\end{align*}
where, for $i = 1,3,\ldots,n-1,$ the second moment of
$(\tilde{\bold{c}}_i(\bold{s}), \tilde{\bold{c}}_{i+1}(\bold{s}))$ is
specified by,
\begin{align*}
   \bold{\Pi}^{\lceil i /2 \rceil}_{\bold{s}} =
  \begin{bmatrix}
    \Pi_{ii} & \Pi_{i,i+1} \\
    \Pi_{i,i+1} & \Pi_{i+1,i+1}
  \end{bmatrix}
   - \begin{bmatrix}
      s_i \\ s_{i+1}
      \end{bmatrix}
      \begin{bmatrix}
      \mu_i \\ \mu_{i+1}
      \end{bmatrix}'
     - \begin{bmatrix}
      \mu_i \\ \mu_{i+1}
      \end{bmatrix}
      \begin{bmatrix}
       s_i \\ s_{i+1}
      \end{bmatrix}'
  +    \begin{bmatrix}
      s_i \\ s_{i+1}
      \end{bmatrix}
     \begin{bmatrix}
      s_i \\ s_{i+1}
      \end{bmatrix}'.
\end{align*}
Then as an application of Theorem \ref{thm:main}, we can write
$Z^\ast_{app}(\bold{s})$ as the value of the semidefinite program in
\eqref{opt:SmallSDP} by replacing parameters
$\boldsymbol{\mu}^r, \bold{\Pi}^r,$ respectively, with
$\boldsymbol{\mu}^r - \bold{s}^r$ and $\bold{\Pi}^r_{\bold{s}}.$
Further, changing the variables $\bold{Y}^r$ to
$\bold{Y}^r - \bold{p}^r{\bold{s}^r}'$ for $r \in [R\,],$ the
objective in \eqref{opt:SmallSDP} becomes,
$\sum_{r} trace(\bold{Y}^r - \bold{p}^r{\bold{s}^r}'),$ and the
psd constraints in \eqref{opt:SmallSDP} becomes,
\begin{align*}
\begin{bmatrix}
  1 & & & {\boldsymbol{\mu}^r}' - {\bold{s}^r}' & & & {\bold{p}^r}'\\
  \boldsymbol{\mu}^r - \bold{s}^r & & & \bold{\Pi}^r - \bold{s}^r{\boldsymbol{\mu}^r}'
  - {\boldsymbol{\mu}^r}{\bold{s}^r}' + {\bold{s}^r\bold{s}^r}' & & & {\bold{Y}^r}' -
  \bold{s}^r{\bold{p}^r}'\\
  \bold{p}^r & & & {\bold{Y}^r} -  {\bold{p}^r}{\bold{s}^r}' & & & \bold{X}^r
\end{bmatrix}
\succeq 0.
\end{align*}
This psd constraint is equivalently written as,
\begin{align*}
\begin{bmatrix}
  1  & {\boldsymbol{\mu}^r}' &  {\bold{p}^r}'\\
  \boldsymbol{\mu}^r  & \bold{\Pi}^r & {\bold{Y}^r}' \\
  \bold{p}^r &  {\bold{Y}^r}  & \bold{X}^r
\end{bmatrix}
\succeq 0,
\end{align*}
due to the identical constraints
that arise as a result of Schur complement  conditions (for psd matrices) on both  the constraints above. Indeed, block-matrices of the form \eqref{eq:block-matrix-form} are psd if and only if both $ \bold{A}$ and $\bold{C} - \bold{B}'\bold{A}^{-1}\bold{B}$ are psd;
for the block matrices in the above constraints, take $\bold{A} = 1$ to verify the desired equivalence.

With these observations, we have
\begin{equation}
  \label{eq:inter-small-sdp-app-sched}
  \begin{array}{rrlllllll}
     Z^*_{app}(\bold{s}) =& \displaystyle \max_{p_i, X_{ij}, Y_{ij}, t_{kj}} &
\displaystyle   \sum_{i=1}^n (Y_{ii} - s_ip_i)\\
   & \mbox{s.t.} &
\begin{bmatrix}
 1 & \mu_i & \mu_{i+1} & p_i &p_{i+1}   \\
 \mu_i  & \Pi_{ii} &   \Pi_{i,i+1}& Y_{ii} & Y_{i,i+1} \\
\mu_{i+1} & \Pi_{i,i+1} &  \Pi_{i+1,i+1}& Y_{i+1,i} &  Y_{i+1,i+1} \\
  p_i & Y_{ii}&  Y_{i+1,i} & X_{ii} & X_{i,i+1}\\
     p_{i+1} & Y_{i,i+1}& Y_{i+1,i+1} & X_{i,i+1} & X_{i+1,i+1}\\
   \end{bmatrix} \succeq 0, \quad \text{ for } i \text{ odd
   }, \\
& & \displaystyle \left(p_1,\ldots,p_n,X_{11},\ldots,X_{nn},X_{12},X_{34},\ldots,X_{n-1,n}\right)  \in C_{app},
\end{array}
\end{equation}
where
\begin{align*}
 C_{app} = conv\left\{ \left(x_1,   \ldots , x_n , x_1^2, \ldots , x_n^2,
x_1x_2, x_3x_4, \ldots ,x_{n-1}x_{n}\right): \bold{x} \in
  \mathcal{X}_{app}\right\}.
\end{align*}

\noindent \textbf{Step 3: An efficient representation of the convex hull
  $C_{app}.$} We now complete the proof of Theorem
\ref{thm:SmallSDP-app-sched} by identifying a characterization of the
convex hull $C_{app}$ that leads to an efficient representation of the
last constraint written in \eqref{eq:inter-small-sdp-app-sched}.
\begin{proposition}
  The set $C_{app}$ is equivalently written as,
  \begin{align*}
    C_{app} =
    \bigg\{&\left(p_1,\ldots,p_n,X_{11},\ldots,X_{nn},X_{12},X_{34},\ldots,X_{n-1,n}\right)
           \in \mathbb{R}^{5n/2} : \nonumber\\
         &\quad p_i =   \sum_{k=1}^i\sum_{j=i}^{n+1} t_{kj}(j-i),  \ X_{ii} =
           \sum_{k=1}^i\sum_{j=i}^{n+1} t_{kj}(j-i)^2,\ \text{ for } i \in
           [n], \nonumber\\
    &\quad X_{i,i+1} = \sum\limits_{k=1}^i \sum\limits_{j={i+1}}^{n+1}
      t_{kj} (j-i)(j-(i+1)), \ \text{ for } i \in [n], i \text{ odd}, \nonumber\\
    &\quad \sum_{k=1}^i\sum_{j=i}^{n+1}t_{kj} = 1, \text{ for } i \in
      [n], \quad
      t_{kj} \geq 0 \text{ for } 1 \leq k \leq j   \leq n+1 \bigg\}.
\end{align*}
\label{prop:capp-char}
\end{proposition}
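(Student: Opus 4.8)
The plan is to prove the two set inclusions, using throughout the structural description in Lemma~\ref{lem:extr-pts-X-app-sched}: for a feasible $\{0,1\}$ vector $(T_{kj})$ encoding an interval partition, exactly one entry $T_{kj}$ with $k\le i\le j$ is nonzero for each $i\in[n]$, namely the one indexing the interval $[k^\ast,j^\ast]$ containing $i$, and then $x_i=j^\ast-i$. Throughout, write $\phi$ for the linear map sending a vector $(t_{kj})$ to the tuple $(p_1,\ldots,p_n,X_{11},\ldots,X_{nn},X_{12},X_{34},\ldots,X_{n-1,n})$ given by the displayed formulas, and write $\Delta=\{(t_{kj})\ge 0:\ \sum_{k=1}^i\sum_{j=i}^{n+1}t_{kj}=1,\ i\in[n]\}$, so that the right-hand set in the statement is exactly $\phi(\Delta)$.

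\textbf{The inclusion $C_{app}\subseteq\phi(\Delta)$.} Since $\phi(\Delta)$ is a linear image of a convex set, it is convex, so it suffices to show that each generator of $C_{app}$, i.e.\ the point $(x_1,\ldots,x_n,x_1^2,\ldots,x_n^2,x_1x_2,x_3x_4,\ldots,x_{n-1}x_n)$ for $\bold x\in\mathcal X_{app}$, lies in $\phi(\Delta)$. I take $(t_{kj})$ to be the $\{0,1\}$ interval-partition vector associated with $\bold x$. The $p_i$-coordinates match by the very definition of $\mathcal X_{app}$. For $X_{ii}$, only the term indexing $[k^\ast,j^\ast]$ survives, so $\sum_{k\le i\le j}t_{kj}(j-i)^2=(j^\ast-i)^2=x_i^2$. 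For $X_{i,i+1}$ with $i$ odd I split cases: if $j^\ast=i$ then $x_i=0$, while the sum there is taken only over $j\ge i+1$ and so has no surviving term, and both sides vanish; if $j^\ast\ge i+1$ then $[k^\ast,j^\ast]$ also contains $i+1$, so $x_ix_{i+1}=(j^\ast-i)(j^\ast-i-1)$, which is exactly the unique surviving term. Hence this generator equals $\phi$ of an interval-partition vector in $\Delta$, completing the inclusion.

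\textbf{The inclusion $\phi(\Delta)\subseteq C_{app}$.} The coordinate $t_{n+1,n+1}$ appears in neither the constraints defining $\Delta$ nor in $\phi$, so it may be discarded; each of the remaining coordinates $t_{kj}$ then satisfies $0\le t_{kj}\le 1$ because its interval covers the position $k\in[n]$, so the feasible set becomes a bounded polytope $\Delta''$ contained in the unit cube, with $\phi(\Delta)=\phi(\Delta'')$. The column of the constraint matrix of $\Delta''$ indexed by $[k,j]$ is the indicator of the row set $\{i\in[n]:k\le i\le j\}$, which is a set of consecutive rows; hence this matrix has the consecutive-ones property on columns and is totally unimodular, so every vertex of $\Delta''$ is a $\{0,1\}$ point and therefore a feasible integral point of the description of $\mathcal X_{app}$ in Lemma~\ref{lem:extr-pts-X-app-sched}. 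Writing an arbitrary feasible $(t_{kj})$ as a convex combination $\sum_\ell\lambda_\ell\bold T^{(\ell)}$ of such vertices and invoking the linearity of $\phi$ together with the previous paragraph (which identifies $\phi(\bold T^{(\ell)})$ as the corresponding generator of $C_{app}$ for some $\bold x^{(\ell)}\in\mathcal X_{app}$) gives $\phi((t_{kj}))=\sum_\ell\lambda_\ell\,\phi(\bold T^{(\ell)})\in C_{app}$.

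\textbf{Where the difficulty lies.} The forward inclusion is a term-by-term verification and should be routine. The substance is the reverse inclusion, and within it the claim that the relaxed polytope $\Delta''$ has only integral vertices. This is exactly where the one-dimensional ``chain'' structure of the appointment-scheduling feasible region is used: it makes the position-versus-interval incidence matrix an interval matrix (equivalently, one can peel off interval partitions greedily), so that relaxing $T_{kj}\in\{0,1\}$ to $t_{kj}\ge 0$ creates no spurious fractional extreme points, and so that the quadratic coordinates $X_{ii}$ and $X_{i,i+1}$ still lift to honest convex combinations of the monomials $x_i^2$ and $x_ix_{i+1}$. Without integrality of this incidence matrix, the stated linear system would only be a relaxation of $C_{app}$ rather than an exact description.
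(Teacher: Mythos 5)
Your proof is correct and follows essentially the same two-step strategy as the paper: verify the $x_i^2$ and $x_ix_{i+1}$ identities on the $\{0,1\}$ interval-partition encodings of the extreme points of $\mathcal{X}_{app}$, then pass to the continuous relaxation via total unimodularity of the position-versus-interval incidence matrix. Your case analysis for the cross terms (based on whether the interval containing $i$ also contains $i+1$) is the argument the paper gives in Remark~\ref{rem-cross-terms-intuitive} rather than the algebraic expansion using \eqref{eqn:T_product_zero} in the main proof body, and you spell out the total unimodularity step (consecutive-ones structure, boundedness, the dangling variable $t_{n+1,n+1}$) that the paper asserts in a single sentence; these are presentational differences, not substantive ones.
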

\begin{proof}
  Take any $\bold{x} \in \mathcal{X}_{app}.$ It follows from the
  characterization in \eqref{eq:app-sched-polytope} that there exists
  an assignment for variables $T_{kj} \in \{0,1\}$ such that only one
  of $\{T_{kj}: k \leq i \leq j\}$ equals one, for every $i \in [n],$
  and $x_i = \sum_{k=1}^i\sum_{j=i}^{n+1}T_{kj}(j-i).$ Therefore,
  $x_i = j-i$ and $x_i^2 = (j-i)^2$ for the unique $j \geq i$ such
  that $T_{kj}=1.$ Equivalently, we have
  \begin{align}
    \label{eq:xi-sq}
    x_i^2 = \sum_{k=1}^i\sum_{j=i}^{n+1}T_{kj}(j-i)^2.
  \end{align}
  Again, since only one of $\{T_{kj}: k \leq i \leq j\}$ equals one, for
  every $i \in [n],$ we have,
   \begin{align}
 \label{eqn:T_product_zero}
     T_{kj} T_{ab} = 0,  \quad\text{when either } k \neq a \text{ or } j
     \neq b \text{ and }  k \leq i \leq j, a \leq i \leq b.
   \end{align}
   Equipped with this observation, consider:
    \begin{align}
    x_ix_{i+1} &=
    \left(\sum_{k=1}^i\sum_{j=i}^{n+1}T_{kj}(j-i)\right)\left(\sum_{a=1}^{i+1}\sum_{b=i+1}^{n+1}T_{ab}(j-(i+1))\right) \nonumber\\
    &=\sum\limits_{k=1}^i \sum\limits_{j=i}^{n+1} T_{kj}(j-i)   \times
      \sum\limits_{a=1}^{i} \sum\limits_{b=i+1}^{n+1} T_{ab}(b- (i+1))
      \nonumber\\
      &\quad\quad \quad\quad+ \sum\limits_{k=1}^i \sum\limits_{j=i}^{n+1} T_{kj}(j-i)
      \times \sum\limits_{b=i+1}^{n+1} T_{i+1,b}(b- (i+1)),
       \nonumber
  \end{align}
  where the latter summand is equal to zero because, a) the terms for
  which $j = i$ are zero due to the appearance of $j-i,$ (see \Cref{fig:case1}) and b) the
  terms for which $j > i$ are zero due to the appearance of
  $T_{kj} T_{i+1,b},$ which is zero due to \eqref{eqn:T_product_zero}  (illustrated in  \Cref{fig:case2}) .
  Likewise, in the first summand, the terms for which
  $k \neq a, j \neq b$ vanish due to \eqref{eqn:T_product_zero}.  As a
  result,
   \begin{align}
     x_ix_{i+1} = \sum\limits_{k=1}^i \sum\limits_{j=i+1}^{n+1} T_{kj}
     (j-i) (j- (i+1)).
     \label{eq:cross-terms-representation}
   \end{align}
  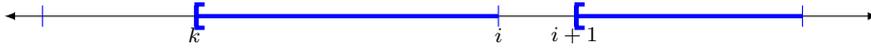
\begin{figure}[h!]
\centering
 \begin{tikzpicture}
\draw[latex-latex] (-7.5,0) -- (4,0) ; 
\foreach \x in  {-7,-5,-1,0,3 } 
\draw[shift={(\x,0)},color=blue] (0pt,4pt) -- (0pt,-4pt);
\draw[ [-, ultra thick, blue] (-5,0) -- (-1,0);
\node at (-5,-0.25) {$k$};
\node at (-1,-0.25) {$i$};
\node at (0,-0.25) {$i+1$};
\draw[ [-, ultra thick, blue] (0,0) -- (3,0);
\end{tikzpicture}
\caption{Terms involving $T_{ki}T_{i+1,b}$ vanish as $x_i = 0$.}
\label{fig:case1}
\end{figure}

 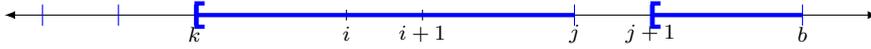
\begin{figure}[h!]
\centering
 \begin{tikzpicture}
\draw[latex-latex] (-7.5,0) -- (4,0) ; 
\foreach \x in  {-7,-6,-5,0,1,3 } 
\draw[shift={(\x,0)},color=blue] (0pt,4pt) -- (0pt,-4pt);
\foreach \x in {-3,-2}
\draw[shift={(\x,0)},color=black] (0pt,2pt) -- (0pt,-2pt);
\draw[ [-, ultra thick, blue] (-5,0) -- (0,0);
\node at (-3,-0.25) {$i$};
\node at (-2,-0.25) {$i+1$};
\node at (-5,-0.25) {$k$};
\node at (-0,-0.25) {$j$};
\node at (1,-0.25) {$j+1$};
\node at (3,-0.25) {$b$};
\draw[ [-, ultra thick, blue] (1,0) -- (3,0);
\end{tikzpicture}
\caption{Terms involving $T_{kj}T_{i+1,b}$ vanish as only one of $T_{kj}$, $T_{i+1,b}$ can be 1.}
\label{fig:case2}
\end{figure}

   \begin{remark}
     The representation in \eqref{eq:cross-terms-representation} for
     the cross terms $x_ix_{i+1}$ can be easily understood via the
     interval partition representation $I_{\bold{x}}$ for
     $\bold{x} \in \mathcal{X}_{app}$ described in Lemma
     \ref{lem:extr-pts-X-app-sched}. For any point
     $\bold{x} \in \mathcal{X}_{app},$ identify the only interval in
     the partition $I_{\bold{x}}$ containing $i$ to be $[k,j].$ Then
     we have that $x_j = 0, x_{j-1} = 1, \ldots, x_k = j-k$ and
     $T_{kj}=1$ (see \eqref{defn-Tkj}). If $i + 1 \in [k,j],$ then
     $x_i = (j-i)$ and $x_{i+1} = j- (i+1)$ and the product
     $x_i x_{i+1} = T_{kj} (j-i) (j-(i+1))$. On the other hand, if
     $i+1$ does not belong to the interval $[k,j],$ we have $x_i = 0;$
     consequently, again $x_i x_{i+1} = T_{kj} (j-i) (j-(i+1)).$ Since
     only one element, $T_{kj},$ in the collection
     $\{T_{ab}: a \leq i \leq b\}$ equals one, the representation in
     \eqref{eq:cross-terms-representation} holds. While the
     representation for the square terms in \eqref{eq:xi-sq} has been
     known in the literature (see, for example, \cite{Mak2015}), the
     representation for the specific cross terms in
     \eqref{eq:cross-terms-representation} has been explicitly characterized, as far as we
     know, for the first time in this paper.
  \label{rem-cross-terms-intuitive}
\end{remark}
\noindent
Combining the observation in \eqref{eq:cross-terms-representation}
with that in \eqref{eq:xi-sq} we obtain
   \begin{align*}
     C_{app} &= conv\left\{ \left(x_1,   \ldots , x_n , x_1^2, \ldots , x_n^2,
     x_1x_2, x_3x_4, \ldots ,x_{n-1}x_{n}\right): \bold{x} \in
     \mathcal{X}_{app}\right\} \\
     &= conv \bigg\{\left(p_1,\ldots,p_n,X_{11},\ldots,X_{nn},X_{12},X_{34},\ldots,X_{n-1,n}\right)
              \in \mathbb{R}^{5n/2} : \nonumber\\
            &\quad \quad\quad\quad  p_i =   \sum_{k=1}^i\sum_{j=i}^{n+1} T_{kj}(j-i),  \ X_{ii} =
              \sum_{k=1}^i\sum_{j=i}^{n+1} T_{kj}(j-i)^2,\ \text{ for } i \in
              [n], \nonumber\\
            &\quad \quad \quad\quad  X_{i,i+1} = \sum\limits_{k=1}^i \sum\limits_{j={i+1}}^{n+1}
              T_{kj} (j-i)(j-(i+1)), \ \text{ for } i \in [n], i \text{ odd}, \nonumber\\
            &\quad \quad \quad\quad  \sum_{k=1}^i\sum_{j=i}^{n+1}T_{kj} = 1, \text{ for } i \in
              [n], \quad
              T_{kj} \in \{0,1\} \text{ for } 1 \leq k \leq j   \leq n+1 \bigg\},
   \end{align*}
   as a consequence of Lemma \ref{lem:extr-pts-X-app-sched}.  Further, total unimodularity of the constraints  over $T$
   verifies the representation for $C_{app}$ in the statement of
   Proposition \ref{prop:capp-char}. \hfill$\Box$
\end{proof}
With this characterization of the set $C_{app}$ in Proposition
\ref{prop:capp-char}, observe that the statement of Theorem
\ref{thm:SmallSDP-app-sched} follows as a consequence of the
formulation in \eqref{eq:inter-small-sdp-app-sched}. This completes
the proof of Theorem \ref{thm:SmallSDP-app-sched}. \hfill$\Box$

\begin{remark}
\emph{Mean-variance bound:}  For any given schedule $\bold{s} \in S,$ the worst-case expected
  total waiting time,
  \begin{align}
    \label{eq:app-sched-mom-prob-mv}
    Z_{mv}^\ast(\bold{s}) =
    \sup\big\{\mathbb{E}_\theta\left[f(\tilde{\bold{u}},\bold{s})\right]
    : \  &\mathbb{E}_\theta\left[\tilde{u}_i
      \right] =\mu_i, \mathbb{E}_\theta[\tilde{u}_i^2] =\Pi_{ii}, \text{ for }
      i \in [n]\big\},
  \end{align}
  that is consistent with given mean and variance information of the
  service times $\{\tilde{u}_i: i \in [n]\}$, can be computed by
  solving the semidefinite programming formulation below in
  \eqref{opt:SmallSDP-app-sched-mean-var}. This formulation results
  from a similar application of Theorem \ref{thm:main} to the simpler
  case where $\mathcal{N}_r = \{r\},$ for $r = 1,\ldots,n.$
\begin{equation}
\label{opt:SmallSDP-app-sched-mean-var}
 \begin{array}{rrlllll}
   Z^\ast_{mv}(\bold{s}) = &\displaystyle \max_{p_i,X_{ii}, Y_{ii}, t_{kj}} &
   \displaystyle \sum_{i=1}^n(Y_{ii} - s_ip_i) \\
   &\mbox{s.t} &
\begin{bmatrix}
 1 & \mu_i & p_i   \\
 \mu_i  & \Pi_{ii} &   Y_{ii}  \\
  p_i & Y_{ii}&   X_{ii} \\
\end{bmatrix} \succeq 0,& \text{ for } i \in [n],\\
& & \displaystyle p_i = \sum_{k=1}^i \sum_{j=i}^{n+1}
t_{kj} (j-i), & \text{ for } i \in [n],\\
& & \displaystyle X_{ii} =  \sum_{k=1}^i \sum_{j=i}^{n+1}
t_{kj} (j-i)^2, & \text{ for } i \in [n],\\
& & \displaystyle \sum_{k=1}^i \sum_{j=i}^{n+1} t_{kj} = 1, & \text{ for } i \in [n],\\
& & \displaystyle  t_{kj} \geq 0, & \text{ for } 1 \leq k  \leq j
\leq n+1.
\end{array}
\end{equation}
Similar to the formulation in Corollary \ref{cor-schedules-dual}, a
distributionally robust schedule that minimizes the worst-case total
expected waiting time can be found by solving the following
semidefinite program:
\begin{equation}
\label{opt:optimal_schedule_mean_variance}
 \begin{array}{rrllllll}
&  \displaystyle \min_{\bold{s}, \boldsymbol{\eta},
  \boldsymbol{\beta}, \bold{\Gamma},\boldsymbol{\rho},
  \boldsymbol{\delta},  \boldsymbol{\gamma}} & \displaystyle \sum_{i =1}^n \eta_i
+\sum_{i=1}^n\beta_i\mu_i+
 \sum_{i=1}^n \Gamma_{ii}\Pi_{ii} + \sum_{i=1}^{n+1} \rho_i \\
 & \mbox{s.t} &
 \begin{bmatrix}
 2\eta_i & {\beta_i} & {\delta_i + s_i} \\
  {\beta_i} &  2\Gamma_{ii} &   -{1}  \\
    {\delta_i + s_i} & -{1}  & 2\gamma_i \\
  \end{bmatrix} \succeq 0, &\text{ for } i \in [n], \\
  & & \displaystyle \sum_{i=k}^j \rho_i \geq \hspace{-5pt}\sum_{i=k}^{\min\{j,n\}}\hspace{-10pt}\delta_i
  (j-i) + \hspace{-5pt}\sum_{i=k}^{\min\{j,n\}}\hspace{-10pt}\gamma_i (j-i)^2, &
  \text{ for } 1 \leq k \leq j \leq n+1,\\
  & & \displaystyle  \sum_{i=1}^ns_i \leq T, \\
  & & \displaystyle s_i \geq 0, & \text{ for } i \in
  [n].
 \end{array}
 \end{equation}
 The semidefinite program in
 \eqref{opt:optimal_schedule_mean_variance} can be seen as an
 alternative to the second order conic programming formulation in
 \cite{Mak2015} where the problem of appointment scheduling in the
 presence of mean and variance information was considered.
\label{rem:mean-variance-case}
The equivalent SOCP formulation  \cite{Mak2015} is provided next where $\boldsymbol{\sigma^2}$ denotes the vector of variances:
 \begin{equation}
\label{opt:jiawei-app-sched-formulation}
 \begin{aligned}
{Z}_{\text{app}}(\boldsymbol{\mu}, \boldsymbol{\sigma^2}) &= \underset{\bold{s},\beta>0, \alpha, \lambda}{\text{min}}
& & \sum_{i=1}^n \lambda_i + \mu_i \alpha_i + (\mu_i^2 + \sigma_i^2)\beta_i \\
& \text{s.t} & &
\sum_{i=k}^{\min\{n,j\}} \lambda_i \geq  \sum_{i=k}^{\min\{n,j\}} \left( \frac{(\pi_{ij} - \alpha_i)^2} {4\beta_i} - s_i \pi_{ij} \right) \\
& & & \text{         for }1 \leq k \leq n, k \leq j \leq n+1
\end{aligned}
\end{equation}
where $
\pi_{ij}  = j-i , 1 \leq i \leq j \leq n+1.
$

\end{remark}

\begin{remark}
  A representation for cross terms $x_ix_{i+2},$ similar to that in
  \eqref{eq:cross-terms-representation} in terms of variables
  $T_{kj} \in \{0,1\},$ does not result in linear representation in the variables $T_{kj}.$
  To see this, recall the interval partition representation described
  in Lemma \ref{lem:extr-pts-X-app-sched}.  Consider any
  $\bold{x} \in \mathcal{X}_{app}$ such that there exist $k,j$
  satisfying $[k,i+1] \in I_{\bold{x}}$ and
  $[i+2,j] \in I_{\bold{x}}.$ Then $x_i = 1$ and
  $x_{i+2} = j - (i+2),$ in which case
  $x_ix_{i+2} = \sum_{k =1}^{i+1}\sum_{j=i+2}^{n+1}T_{k,i+1}
  T_{i+2,j}(j-(i+2)),$ which cannot be reduced in a straightforward manner to a linear
  representation as in \eqref{eq:cross-terms-representation}.
  \label{rem-cross-geq-2}
\end{remark}



\subsection{Longest path in directed acyclic graphs}
\label{sec:path}
In this section, we examine the problem of computing the expected
length of the longest path between a fixed start node and a sink node
in a directed acyclic graph whose arc lengths are uncertain. A key
application of this longest path problem is to estimate project
completion times using Project Evaluation and Review Technique (PERT)
networks in project management (see, for example,
\cite{VanSlyke1963}).  A PERT network is a directed acyclic graph
representation of a project that consists of several activities with
partially specified precedence relationship among the activities. Our
objective is to tackle the case where the activity durations (arc
lengths) are random, dependent and their joint distribution is not
fully known.

Let ${V} = \{0,\ldots,m-1\}$ denote the set of nodes of a directed
acyclic graph $G.$ Suppose that the nodes $0$ and $m-1$ represent the
start and sink nodes. Let $\mathcal{A}$ denote the set of arcs in $G$
and $c_{ij}$ denote the length of arc $(i,j)$ between nodes $i$ and
$j.$ If $G$ is a PERT network, the nodes $0$ and $m-1$ represent the
start and end of the project; the length of the longest path between
nodes $0$ and $m-1$ represents the project completion duration. The
length of the longest path can be represented as the optimal objective
value of the following combinatorial optimization problem:
\begin{equation}
\label{opt:det_pert}
\begin{array}{rrllllll}
  Z(\bold{c}) = & \max  & \displaystyle \sum_{(i,j) \in \mathcal{A}} c_{ij} x_{ij}  \\
  & \mbox{s.t} & \displaystyle \sum_{j: (i,j) \in \mathcal{A}} x_{ij} - \sum_{j:
    (j,i) \in \mathcal{A}} x_{ji} =
\begin{cases}
\ \ 1, \quad &\mbox{if } i=0, \\
 -1,  &\mbox{if } i=m-1, \\
\ \ 0, & \mbox{otherwise, } \\
\end{cases} \\
& &  x_{ij} \in \{0,1\},  \quad\text{ for }  (i,j) \in
\mathcal{A}.
\end{array}
\end{equation}
If the arc lengths $(c_{ij})_{(i,j) \in \mathcal{A}}$ are known,
$Z(\bold{c})$ can be computed in polynomial-time by solving the linear
programming relaxation of the formulation in \eqref{opt:det_pert} due
to the total unimodularity of the underlying constraint matrix.

On the other hand, if the arc lengths are random, exact computation of
the expected length of the longest path is known to be \#P-hard even with the
assumption of independence among arc lengths (see
\cite{Hagstrom1988}). For specialized graph structures such as
series-parallel graphs, it has been shown in
\cite{Ball1995,Mohring2001} that the expected length of the longest
path can be computed in time polynomial in the size of the graph and
the number of points in the discrete support of the arc lengths.

In the absence of the knowledge of the entire joint distribution of
the arc lengths, the distributionally robust formulations in
\cite{Bertsimas2004,Bertsimas2006} result in polynomial-time solvable
bounds for the project duration when the marginal moments of arc
lengths are specified. A natural approach to specify correlation
information in PERT networks, in order to obtain tighter bounds, is to
consider all the activities that enter a node to be related and
therefore specify correlation information among all activities that
enter a node. Indeed, such a partition formed by sets of incoming arcs
into nodes have been considered for specifying marginal distribution
information in
\cite{doi:10.1287/opre.10.6.808,doi:10.1002/net.3230070407,Doan2012}.
Theorem \ref{thm:longest-path} below identifies a polynomial-time
solvable formulation for evaluating the maximum possible (worst-case)
expected project duration in the presence of mean and covariance
information of activity durations whose arcs enter the same node.

To fix notation, let $n$ be the cardinality of the set $\mathcal{A}$
of arcs and $R = m-1$. For the given directed acyclic graph $G,$
consider the following partition of $\mathcal{A},$
\begin{align}
  \mathcal{N}_r = \{i:(i,r) \in \mathcal{A}\}, \quad\quad\text{for }
  r = 1,\ldots,R,
\label{eq:partition-PERT}
\end{align}
formed by considering sets of arcs that enter node $r,$ for
$r = 1,\ldots,m-1.$ Let
$\tilde{\bold{c}} = (\tilde{c}_{ij})_{(i,j) \in \mathcal{A}}$ be the
random vector of arc lengths and
$\tilde{\bold{c}}^r = (\tilde{c}_{ir})_{i:\,(i,r) \in \mathcal{A}}$ be
the random subvector of arc lengths of arcs entering node $r,$ for
$r = 1,\ldots,R.$ Given that the expected value of
$\tilde{\bold{c}}$ is $\boldsymbol{\mu}$ and that of
$\tilde{\bold{c}}^r(\tilde{\bold{c}}^r)'$ is $\bold{\Pi}^r$ for every
$r \in \{1,\ldots,m-1\},$ our objective is to evaluate,
\begin{align}
  Z^\ast_{path} = \sup\left\{
  \mathbb{E}_\theta\left[Z(\tilde{\bold{c}})\right]:
  \mathbb{E}_\theta[\tilde{\bold{c}}] = \boldsymbol{\mu},\,
  \mathbb{E}_\theta[\tilde{\bold{c}}^r(\tilde{\bold{c}}^r)'] =
  \bold{\Pi}^r\text{ for } r \in [R\,], \, \theta \in
  \mathcal{P}(\mathbb{R}^{n}) \right\},
  \label{eq:defn-zpath}
\end{align}
where $Z(\cdot)$ is specified as in \eqref{opt:det_pert}.
\begin{theorem}
  \label{thm:longest-path}
   $Z^\ast_{path}$ can be evaluated as the
  optimal objective value of the following semidefinite program:
   \begin{equation}
\label{opt:SmallSDP-path}
 \begin{array}{rrllllll}
{Z}^\ast_{path} &= \displaystyle \max_{\substack{\bold{p}^r, \bold{X}^r, \bold{Y}^r} } & \displaystyle \sum_{r=1}^{m-1} trace(\bold{Y}^r) \\
&\mbox{s.t} &
\begin{bmatrix}
 1 & \boldsymbol{\mu^r}' & {\bold{p}^r}'  \\
\boldsymbol{\mu}^r & \bold{\Pi}^r & {\bold{Y}^r}'\\
   \bold{p}^r  & \bold{Y}^r& \bold{X}^r\\
 \end{bmatrix} \succeq 0, \;\quad\quad \text{ for } r \in
 \{1,\ldots,m-1\},
 \\
 & & \displaystyle \sum_{j: (i,j) \in \mathcal{A}} p_{ij} - \sum_{j:
   (j,i) \in \mathcal{A}} p_{ji} =
\begin{cases}
\ \ 1, &\mbox{if } i=0, \\
-1, & \mbox{if } i=m-1, \\
\ \ 0, & \mbox{otherwise,} \\
\end{cases} \\
& & \displaystyle X^r_{jk} =
 \begin{dcases}
  p_{ij}, & \text{ if } j=k, \\
  0, & \text{otherwise,}\\
  \end{dcases}\quad \text{for } r=1, \ldots, m-1,\\
 & & \displaystyle p_{ij} \geq 0, \quad \text{ for } (i,j) \in
 \mathcal{A}.
\end{array}
\end{equation}
\end{theorem}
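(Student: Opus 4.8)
The plan is to obtain Theorem~\ref{thm:longest-path} as a direct application of Theorem~\ref{thm:main} to the feasible region $\mathcal{X}$ of \eqref{opt:det_pert} under the partition \eqref{eq:partition-PERT}, the only substantive step being an explicit linear description of the projected convex hull of quadratic forms appearing in \eqref{opt:SmallSDP}.

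\textbf{Step 1: combinatorial structure of $\mathcal{X}$.} First I would record that, since the node-arc incidence matrix in \eqref{opt:det_pert} is totally unimodular and the right-hand side is integral, every vertex of the linear relaxation of \eqref{opt:det_pert} is integral; in a directed acyclic graph the integral feasible points are exactly the incidence vectors of simple directed paths from node $0$ to node $m-1$. Consequently $\mathcal{X}$ is bounded, $Z(\tilde{\bold{c}}) = \max\{\tilde{\bold{c}}'\bold{x} : \bold{x} \in \mathcal{X}\}$, and $conv(\mathcal{X})$ is precisely the polytope described by the flow-conservation equalities of \eqref{opt:det_pert} together with $p_{ij} \ge 0$ (the bounds $p_{ij} \le 1$ being automatically implied in an acyclic graph). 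The key structural observation is that any $0$-to-$(m-1)$ path uses at most one arc entering each node $r \in \{1,\dots,m-1\}$; hence for every $\bold{x} \in \mathcal{X}$ the subvector $\bold{x}^r$ has at most one nonzero coordinate, equal to $1$, so that $\bold{x}^r{\bold{x}^r}'$ is the diagonal matrix whose diagonal equals $\bold{x}^r$ (the diagonal identity uses $x_{ir}^2 = x_{ir}$, and the vanishing of the off-diagonal entries uses the at-most-one-incoming-arc property). I would also note, as the setup implicitly assumes, that no arc enters the source node $0$, so that \eqref{eq:partition-PERT} is genuinely a partition of $\mathcal{A}$.

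\textbf{Step 2: applying Theorem~\ref{thm:main} and collapsing the convex hull.} Next I would invoke Theorem~\ref{thm:main} with the partition \eqref{eq:partition-PERT}, obtaining $Z^\ast_{path} = \hat{Z}^\ast$ where $\hat{Z}^\ast$ is the optimal value of \eqref{opt:SmallSDP}; the positive semidefinite blocks there are exactly the matrices displayed in \eqref{opt:SmallSDP-path}. It then remains to rewrite the membership constraint
\[
(\bold{p}, \bold{X}^1,\dots,\bold{X}^R) \in conv\left\{ (\bold{x}, \bold{x}^1{\bold{x}^1}',\dots,\bold{x}^R{\bold{x}^R}') : \bold{x} \in \mathcal{X} \right\}.
\]
By Step~1, on the set $\mathcal{X}$ the map $\bold{x} \mapsto (\bold{x}, \bold{x}^1{\bold{x}^1}',\dots,\bold{x}^R{\bold{x}^R}')$ coincides with the \emph{linear} map $\bold{x} \mapsto (\bold{x}, \mathrm{diag}(\bold{x}^1),\dots,\mathrm{diag}(\bold{x}^R))$. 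Since an affine map commutes with the convex-hull operation, the right-hand side above equals $\{ (\bold{p}, \mathrm{diag}(\bold{p}^1),\dots,\mathrm{diag}(\bold{p}^R)) : \bold{p} \in conv(\mathcal{X}) \}$; equivalently, each $\bold{X}^r$ is diagonal with diagonal $\bold{p}^r$ (the third family of constraints in \eqref{opt:SmallSDP-path}), while $\bold{p}$ ranges over the path polytope described by the flow-conservation and nonnegativity constraints (the second and fourth families). Substituting this description into \eqref{opt:SmallSDP} reproduces \eqref{opt:SmallSDP-path}; since that program has size polynomial in the size of $G$, the representation is polynomial-time solvable.

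\textbf{Main obstacle.} I expect most of the argument to be bookkeeping: once the at-most-one-incoming-arc structure is in hand, collapsing $\bold{x}^r{\bold{x}^r}'$ to $\mathrm{diag}(\bold{x}^r)$ and invoking Theorem~\ref{thm:main} are immediate. The two points that genuinely require care are (i) the total-unimodularity facts that $\mathcal{X}$ is exactly the set of simple-path incidence vectors and that $conv(\mathcal{X})$ admits the claimed linear description with the upper bounds $p_{ij}\le 1$ redundant, and (ii) the identity $conv(f(\mathcal{X})) = f\big(conv(\mathcal{X})\big)$ for the affine map $f$ above, which is exactly what allows replacing a convex-hull membership constraint by linear equalities and is therefore the real reason the reformulation is polynomial-time solvable.
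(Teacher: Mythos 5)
Your proposal is correct and follows essentially the same route as the paper's proof: apply Theorem~\ref{thm:main} under the partition \eqref{eq:partition-PERT}, use the fact that each path uses at most one arc entering each node so that $\bold{x}^r{\bold{x}^r}' = \mathrm{Diag}(\bold{x}^r)$, and then exploit linearity of this map together with total unimodularity to replace the projected convex-hull constraint by the flow-polytope description. Your explicit remarks on why $conv(f(\mathcal{X})) = f(conv(\mathcal{X}))$ for the affine map $f$ and on the (implicit) assumption that no arc enters the source are minor clarifications of the same argument, not a different approach.
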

\begin{proof}
  Let us use $\mathcal{X}_{path}$ to denote the feasible region to the
  formulation \eqref{opt:det_pert}. Then as an
  application of Theorem \ref{thm:main}, $Z^\ast_{path}$ can be
  written as the optimal objective value of the semidefinite program
  in \eqref{opt:SmallSDP}. To efficiently represent the convex hull
  constraint in \eqref{opt:SmallSDP}, observe that for any
  $\bold{x} \in \mathcal{X}_{path},$
  \begin{align}
    x_{ir}x_{jr} =
    \begin{cases}
      x_{ir}, \quad&\text{ if } i = j,\\
      0, &\text{ if } i \neq j,
    \end{cases}
      \label{eq:cross-terms-path}
  \end{align}
  for every $i,j$ such that $i,j \in \mathcal{N}_r.$ This follows from
  the observation that any path from $0$ to $m$ that passes through
  $r$ can contain only one of the arcs
  $\{(k,r): (k,r) \in \mathcal{A}\}.$ To see this explicitly from the
  constraints in \eqref{opt:det_pert}, observe that if $\bold{x}$ is
  such that $\sum_{k:(k,r) \in \mathcal{A}}x_{kr} = 1,$ then as
  $x_{kr} \in \{0,1\},$ only one of $\{x_{kr}: (k,r) \in \mathcal{A}\}$
  equals 1. Therefore $x_{ir}x_{jr} = 0$ if $i \neq j.$ On the other
  hand, $x_{ir}^2 = x_{ir}$ as $x_{ir} \in \{0,1\},$ thus verifying
  \eqref{eq:cross-terms-path}. As a result of this and total unimodularity of
  the constraints in formulation \eqref{opt:det_pert},
  \begin{align*}
    conv&\left\{\left(\bold{x}, \bold{x}^1{\bold{x}^1}', \ldots,
          \bold{x}^{m-1}{\bold{x}^{m-1}}'\right): \bold{x} \in
          \mathcal{X}_{path}\right\}\\
        &\quad= conv\left\{\left(\bold{x}, \text{Diag}(\bold{x}^1),\ldots,
          \text{Diag}(\bold{x}^{m-1})\right): \bold{x} \in
          \mathcal{X}_{path}\right\},\\
        &\quad= \left\{\left(\bold{p}, \text{Diag}(\bold{p}^1), \ldots,
          \text{Diag}(\bold{p}^{m-1})\right): \bold{p} \in
          conv(\mathcal{X}_{path})\right\},
  \end{align*}
  where Diag$(\bold{x}^r)$ denotes the $n_r \times n_r$ diagonal
  matrix formed with
  elements from the subvector $\bold{x}^r.$ Since the convex hull of
  $\mathcal{X}_{path}$ is simply the collection of points $\bold{p} =
  (p_{ij})_{(i,j) \in \mathcal{A}}$ such that $p_{ij} \geq
  0$ and
  \begin{align*}
    \sum_{j: (i,j) \in \mathcal{A}} p_{ij} - \sum_{j:
    (j,i) \in \mathcal{A}} p_{ji} =
    \begin{cases}
      \ \ 1, \quad &\mbox{if } i=0, \\
      -1,  &\mbox{if } i=m-1, \\
      \ \ 0, & \mbox{otherwise, } \\
    \end{cases},
  \end{align*}
  the constraints in the formulation \eqref{opt:SmallSDP-path} are
  equivalent to those in \eqref{opt:SmallSDP}. This completes the
  proof of Theorem \ref{thm:longest-path}.  \hfill$\Box$
\end{proof}

\subsection{Linear assignment problem}
\label{sec:asg}
In this section, we consider the linear assignment problem (see
\cite{Kuhn1955,Munkres1957}), where $m$ entities belonging to a set
$V$ need to be assigned to $m$ entities in a set $U$.  The entities in
the sets $U$ and $V$ can be thought, respectively, as candidates and
jobs that need to be performed by the candidates. Each candidate must
be assigned to exactly one job and each job must be assigned to
exactly one candidate. In the version we consider, only candidates can
have preferences for jobs and let $c_{ij}$ be the preference of
candidate $i$ for job $j.$

The described information can be represented as a weighted undirected
bipartite graph with weights given by
$\bold{c} = (c_{ij})_{(i,j): i \in U, j \in V}.$ Among the $m!$
possible assignments of entities in $V$ to $U,$ the objective is to
identify an assignment that maximizes the sum of preferences of all
candidates. Hereafter, we address the sum of preferences of an
assignment as the ``total welfare'' of the assignment.  An assignment
that maximizes the total welfare can be obtained by solving the
following combinatorial optimization problem:
\begin{equation}
\label{opt:det_assignment}
\begin{array}{rrllllll}
Z(\bold{c})= & \displaystyle \max_{x_{ij}} & \displaystyle \sum_{i \in U} \sum_{j \in V} c_{ij} x_{ij} & \\
& \mbox{s.t} & \displaystyle \sum_{j \in V} x_{ij} =1, & \text{ for } i \in U,\\
& & \displaystyle \sum_{i\in  U} x_{ij} =1, & \text{ for } j \in V,\\
& & \displaystyle x_{ij} \in \{0,1\},  & \text{ for } i \in U, j \in V.
\end{array}
\end{equation}

Similar to the formulation for computing the length of the longest
path in \eqref{opt:det_pert}, the constraints in the formulation
\eqref{opt:det_assignment} are totally unimodular. Therefore a linear
programming relaxation can be used to identify an optimal assignment
in polynomial-time.

When the preferences are random, there have been attempts in
\cite{Pardalos1993,AlmSorkin2002} to develop an understanding of the
expected total welfare (sum of preferences) of the optimal
assignments. Since the deterministic formulation in
\eqref{opt:det_assignment} is solvable in polynomial-time, the
distributional robust bound with marginal moments can be solved in
polynomial-time as well to result in tight bounds (see
\cite{Bertsimas2004}). In this section, we employ Theorem
\ref{thm:main} to compute tight bounds that are applicable when partial
correlation information is known in addition to the marginal moments.

Identifying the entries in $U$ with $\{1,\ldots,m\},$ we take
$\mathcal{N}_r = \{(r,j): j \in V\},$ for $r \in [m].$ This
corresponds to the setting where the correlation of preferences
between any two jobs for the same candidate is known, but correlation
across candidates is not known. Let
$\tilde{\bold{c}} = (\tilde{c}_{ij})_{i: i \in U, j \in V}$ be the
random vector of preferences and
$\tilde{\bold{c}}^i = (\tilde{c}_{ij})_{j \in V}$ be the random
subvector of $\tilde{\bold{c}}$ when the indices are restricted to the
subset $\mathcal{N}_i.$ We aim to evaluate the bound,
\begin{align}
  Z^\ast_{lap} = \sup\left\{
  \mathbb{E}_\theta\left[Z(\tilde{\bold{c}})\right]:
  \mathbb{E}_\theta[\tilde{\bold{c}}] = \boldsymbol{\mu},\,
  \mathbb{E}_\theta[\tilde{\bold{c}}^r(\tilde{\bold{c}}^r)'] =
  \bold{\Pi}^r\text{ for } r \in [R\,], \, \theta \in
  \mathcal{P}(\mathbb{R}^{n}) \right\},
  \label{eq:defn-zpath}
\end{align}
where $Z(\cdot)$ is given by \eqref{opt:det_assignment}.  As an
alternative to the partition considered, one could consider the
partition where we identify the entities in $V$ with $\{1,\ldots,m\}$
and take $\mathcal{N}_r = \{(i,r): i \in U\},$ for $r \in [m].$ In
settings where $-c_{ij}$ can be interpreted as the cost for assigning
job $j$ to candidate $i,$ this partition corresponds to knowing
correlation between costs for the same job when performed by different
candidates. The following observation can be replicated for this
partition as well.

\begin{theorem}
  \label{thm:longest-asg}
  Suppose that $Z^\ast_{lap}$ is defined as in
  \eqref{eq:defn-zpath}. Then $Z^\ast_{lap}$ can be evaluated as the
  optimal objective value of the following semidefinite program:
   \begin{equation}
\label{opt:SmallSDP-asg}
 \begin{array}{rrlllll}
\displaystyle  {Z}^\ast_{lap} &= \displaystyle \max_{\substack{\bold{p}^r, \bold{X}^r, \bold{Y}^r}} & \displaystyle \sum_{r=1}^{m-1} trace(\bold{Y}^r) \\
&\mbox{s.t} &
\begin{bmatrix}
 1 & {\boldsymbol{\mu}^r}' & {\bold{p}^r}'  \\
\boldsymbol{\mu}^r & \bold{\Pi}^r & {\bold{Y}^r}'\\
   \bold{p}^r  & \bold{Y}^r& \bold{X}^r\\
 \end{bmatrix} \succeq 0, &\text{ for } r \in [m],
 \\
 & & \displaystyle \sum_{j \in V} p_{ij}  = 1, & \text{ for } i \in U,\\
 & & \displaystyle \sum_{i \in U} p_{ij}  = 1, & \text{ for } j \in V,\\
 && \displaystyle  X^r_{jk} =
 \begin{dcases}
  p_{rj}, & \text{ if } j=k, \\
  0, & \text{otherwise,}\\
  \end{dcases}\quad \text{ for } r=1, \ldots,m, \\
 && \displaystyle p_{ij} \geq 0, &
    \text{ for } i \in U, j \in V.
 \end{array}
\end{equation}
\end{theorem}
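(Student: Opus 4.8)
The plan is to follow verbatim the route used in the proof of Theorem \ref{thm:longest-path}, with the PERT flow polytope replaced by the assignment polytope. First I would check that the partition $\mathcal{N}_r = \{(r,j): j \in V\}$, $r \in [m]$, is a genuine (non-overlapping, covering) partition of the index set of $\bold{\tilde c}$, so that Theorem \ref{thm:main} applies to the feasible region $\mathcal{X}_{lap}$ of the integer program \eqref{opt:det_assignment}; this immediately writes $Z^\ast_{lap}$ as the optimal value of the semidefinite program \eqref{opt:SmallSDP}. The remaining work is to show that the projected convex-hull constraint in \eqref{opt:SmallSDP} is exactly the system of linear constraints appearing in \eqref{opt:SmallSDP-asg}.

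The key structural observation is that, restricted to a block, the assignment constraints behave like the incoming-arc constraints at a PERT node. For any $\bold{x} \in \mathcal{X}_{lap}$ and any fixed $r$, the equality $\sum_{j \in V} x_{rj} = 1$ together with $x_{rj} \in \{0,1\}$ forces exactly one coordinate of the subvector $\bold{x}^r = (x_{rj})_{j \in V}$ to equal one and all the others to be zero. Hence $x_{rj}x_{rk} = 0$ whenever $j \neq k$, while $x_{rj}^2 = x_{rj}$, so that $\bold{x}^r{\bold{x}^r}' = \text{Diag}(\bold{x}^r)$ for every $r \in [m]$. Consequently, using that $\bold{x} \mapsto (\bold{x}, \text{Diag}(\bold{x}^1),\ldots,\text{Diag}(\bold{x}^m))$ is linear and that linear maps commute with the convex-hull operation,
\begin{align*}
  conv\left\{\left(\bold{x}, \bold{x}^1{\bold{x}^1}',\ldots,\bold{x}^m{\bold{x}^m}'\right): \bold{x} \in \mathcal{X}_{lap}\right\}
  &= conv\left\{\left(\bold{x}, \text{Diag}(\bold{x}^1),\ldots,\text{Diag}(\bold{x}^m)\right): \bold{x} \in \mathcal{X}_{lap}\right\}\\
  &= \left\{\left(\bold{p}, \text{Diag}(\bold{p}^1),\ldots,\text{Diag}(\bold{p}^m)\right): \bold{p} \in conv(\mathcal{X}_{lap})\right\}.
\end{align*}

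Finally I would invoke the total unimodularity of the constraint matrix of \eqref{opt:det_assignment} to identify $conv(\mathcal{X}_{lap})$ with the Birkhoff polytope, namely the set of $\bold{p} = (p_{ij})$ with $p_{ij} \geq 0$, $\sum_{j \in V} p_{ij} = 1$ for $i \in U$, and $\sum_{i \in U} p_{ij} = 1$ for $j \in V$. Substituting this description of $conv(\mathcal{X}_{lap})$, together with the diagonal identities $X^r_{jk} = p_{rj}$ for $j = k$ and $X^r_{jk} = 0$ otherwise, into \eqref{opt:SmallSDP} shows that its constraints coincide with those of \eqref{opt:SmallSDP-asg}, which proves the theorem. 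I do not expect a real obstacle here: the argument is essentially a transcription of the PERT case, and the only step requiring a line of care is the block-wise reduction $\bold{x}^r{\bold{x}^r}' = \text{Diag}(\bold{x}^r)$; after that, everything is a routine substitution, and the hypothesis $\bold{\Pi}^r \succ \boldsymbol{\mu}^r{\boldsymbol{\mu}^r}'$ assumed throughout guarantees the attainment/strong-duality properties behind Theorem \ref{thm:main}.
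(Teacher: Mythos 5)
Your proposal is correct and follows essentially the same route as the paper's proof: apply Theorem \ref{thm:main} to the assignment polytope, use the constraint $\sum_{j\in V}x_{rj}=1$ with binariness to get $\bold{x}^r{\bold{x}^r}'=\mathrm{Diag}(\bold{x}^r)$, and then invoke total unimodularity to identify $conv(\mathcal{X}_{lap})$ with the Birkhoff polytope. No gaps; the argument matches the paper's proof of Theorem \ref{thm:longest-asg} step for step.
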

\begin{proof}
  As in the proof of Theorem \ref{thm:longest-path}, let
  $\mathcal{X}_{lap}$ be the bounded feasible region to the
  formulation \eqref{opt:SmallSDP-asg}. Then as an application of
  Theorem \ref{thm:main}, $Z^\ast_{lap}$ can be written as the
  optimal objective value of the semidefinite program in
  \eqref{opt:SmallSDP}. To efficiently represent the convex hull
  constraint in \eqref{opt:SmallSDP}, observe that for any
  $\bold{x} \in \mathcal{X}_{lap},$
  \begin{align*}
    x_{ij}x_{ik} =
    \begin{cases}
      x_{ij}, \quad&\text{ if } j = k,\\
      0, &\text{ if } j \neq k,
    \end{cases}
  \end{align*}
  for every $i \in U,$ $j, k \in V.$ This is because, as in the proof
  of Theorem \ref{thm:longest-path}, only one of $\{x_{ij}: j \in V\}$
  equals 1, for every $i \in U;$ here recall the constraint,
  $\sum_{j \in V}x_{ij} = 1,$ that dictates that only one entity from
  $V$ is assigned exactly to every $i \in U.$  With
  $\bold{x}^i = (x_{ij})_{j \in V},$ we
  obtain, 
  \begin{align*}
    conv&\left\{\left(\bold{x}, \bold{x}^1{\bold{x}^1}', \ldots,
    \bold{x}^{m}{\bold{x}^{m}}'\right): \bold{x} \in
    \mathcal{X}_{lap}\right\}\\
    &\quad\quad= conv\left\{ \left(\bold{x}, \text{Diag}(\bold{x}^1),\ldots,
    \text{Diag}(\bold{x}^{m})\right): \bold{x} \in
    \mathcal{X}_{lap}\right\},\\
    &\quad\quad= \left\{\left( \bold{p}, \text{Diag}(\bold{p}^1), \ldots,
      \text{Diag}(\bold{p}^{m})\right): \bold{p} \in
      conv(\mathcal{X}_{lap})\right\},
  \end{align*}
  where Diag$(\bold{x}^i)$ denotes the $m \times m$ diagonal matrix
  formed with elements from the subvector $\bold{x}^i.$ Since, due to total unimodularity, the
  convex hull of $\mathcal{X}_{lap}$ is simply the collection of
  points $\bold{p} = (p_{ij})_{i \in U, j \in V}$ such that
  $p_{ij} \geq 0,$ $\sum_{j \in V}p_{ij}=1$ and
  $\sum_{i \in U}p_{ij}=1,$ the constraints in the formulation
  \eqref{opt:SmallSDP-asg} are equivalent to those in
  \eqref{opt:SmallSDP}. This completes the proof of Theorem
  \ref{thm:longest-asg}.  \hfill$\Box$
\end{proof}

\section{Numerical results}
\label{sec:computations}
In this section, we report the results of numerical experiments for
the appointment scheduling formulation considered in Section
\ref{sec:app-sched}. We compare the performance of
the semidefinite programming formulation in Theorem \ref{thm:SmallSDP-app-sched} (which we refer to as \textit{Non-overlapping}), with the
following three alternatives:
\begin{itemize}
\item[a)] The mean-variance formulation is solved using the SOCP reformulation \eqref{opt:jiawei-app-sched-formulation} originally proposed
  in \cite{Mak2015}. This approach, addressed
  ``\textit{Mean-Variance}'' in the discussions that follow, provides an upper bound for $Z^\ast_{app}(\bold{s})$ in Theorem \ref{thm:SmallSDP-app-sched}.
\item[b)] For the second alternative, we solve for $Z^\ast_{app}(\bold{s})$
  by maximizing over the unspecified covariance entries of
  $\tilde{\bold{u}}$ in the formulation \eqref{opt:cp-app-sched-formulation} (originally proposed in \cite{Kong2013}) that
  assumes the knowledge of the mean and entire covariance matrix of
  $\tilde{\bold{u}}$. The exact formulation in \cite{Kong2013}
  involves a completely positive constraint, which is then relaxed to
  a doubly nonnegative matrix constraint for tractability (see
  \cite{Kong2013}), thus resulting only in an upper bound for
  $Z_{app}^\ast(\bold{s}).$ In \Cref{simulations:bound} where we restrict our attention to the distributionally robust bound, we use this formulation, However in \Cref{simulations:schedule}, we use the dual formulation  \eqref{opt:cp-app-sched-formulation-dual}, where instead of the copositivity requirement $\bold{M} \in  {\cal C^*}(\mathbb{R}_+^{2n+1})$, we use $ \bold{M} = \bold{S} + \bold{N}, \bold{S} \succeq 0 , \bold{N} \text{ non-negative}$ as an approximation. We address the primal as well as dual formulations as
  ``\textit{DNN-relaxation}'' in the discussion below.
\item[c)] The exact value of $Z_{app}^\ast(\bold{s})$ is also computed for the formulation
  \eqref{opt:karthik-reduced-sdp-formulation} where the extreme points
  corresponding to \eqref{opt:det-large} are explicitly enumerated and only the partial moment information is assumed to be known. The explicit enumeration of the extreme points involves introduction of new scalar variables $\alpha_\bold{x}$ for each extreme point $\bold{x}$ such that $\sum_{\bold{x}} \alpha_\bold{x} = 1$ and $\alpha_{\bold{x}} \geq 0$.  The convex hull constraint in formulation \eqref{opt:karthik-reduced-sdp-formulation} is captured using the following constraints:
$
  \bold{p} = \sum\limits_{\bold{x} \in \mathcal{X}_{app}} \alpha_{\bold{x}}\bold{x} ,
   \bold{X} = \sum\limits_{\bold{x} \in \mathcal{X}_{app}} \alpha_{\bold{x}}\bold{x}\bold{x}',
   \sum_{\bold{x} \in \mathcal{X}_{app}} \alpha_\bold{x} = 1 ,
   \alpha_{\bold{x}} \geq 0 \forall \bold{x} \in \mathcal{X}_{app}
$.
  Since the number of extreme points grows exponentially with $n,$
  this approach is feasible only for small values of $n.$ This exact
  approach, labeled as ``\textit{Large-SDP}'', is feasible in our
  computational setup only for $n \leq 9.$
\end{itemize}

We also tested a recently proposed alternate approximation scheme proposed in
\cite{BomzeCDL17} in place of the doubly nonnegative matrix based
relaxation for approximating the completely positive constraint in the
exact formulation in \cite{Kong2013}. The results obtained were
identical to the approach labeled above as ``\textit{DNN-relaxation}''
and hence we only report the results of \textit{DNN-relaxation}. All
experiments were run on MATLAB using SDPT3
solver\footnote{http://www.math.nus.edu.sg/~mattohkc/sdpt3.html}
\cite{Toh1999,Tutuncu2003} and YALMIP
interface\footnote{https://yalmip.github.io/}. For the SOCP based
approach in \cite{Mak2015} that is labeled as
``\textit{Mean-Variance}'', the solver used is SeDuMi \cite{SeDuMi}.

\subsection{Comparison of worst-case expected total waiting times}
\label{simulations:bound}
Assuming that the correlation coefficient between service times
$\tilde{u}_i$ and $\tilde{u}_{i+1}$ equals $\rho,$ for every $i$ in
$\{1,3,\ldots, n-1\}$, we compare the objective value of the
formulation in Theorem \ref{thm:SmallSDP-app-sched} with that of the
alternative approaches described above, for various values of $\rho$
in the interval $[-1,1].$ We report objective values averaged over 50
independent runs, where in each run, the means and variances of
$\tilde{{c}}_i(\bold{s}) = \tilde{{u}}_i - {s}_i$ are taken to be
independent realizations of random variables uniformly distributed in
the intervals $[-2,2]$ and $(0,5]$ respectively. For all the results in the current subsection, since we are interested in only the bound computation, we set $\bold{s}=0$ in all the  formulations.

See Figure \ref{fig:app-sched-bounds} for a comparison of the
ratio of average objective values of our formulation in Theorem
\ref{thm:SmallSDP-app-sched} and the \textit{Large-SDP} approach for
$n = 6.$ \Cref{tab:app-sched-bound-rho} gives the min, max and mean ratios for various formulations. Since our formulation is exact, it is not surprising that
the ratio is 1 for all values of $\rho.$ The ratios resulting by
comparing average objective values of \textit{Mean-Variance} and
\textit{DNN-relaxation} approaches with the exact \textit{Large-SDP}
approach are also reported in Figure \ref{fig:app-sched-bounds}. The
variability in the ratio for the \textit{Mean-Variance} approach can
be inferred from the error bars in \Cref{fig:app-sched-bounds}. The
growth in gap between the objective values of the
\textit{Mean-Variance} approach and our partial covariance based
approach in \eqref{eq:app-sched-mom-prob}, as $n$ increases can be
inferred from Figure \ref{fig:app-sched-bounds-n}.

 \begin{table}[h!]
 \caption{Bound ratios over Large-SDP bound for various approaches to DR appointment scheduling for various $\rho$ values, n=6. 50 runs were performed with random means in [-2,2] and variances in (0,5].
 }
 \label{tab:app-sched-bound-rho}
 \centering
 \begin{tabular}{|c|c|c|c|c|c|c|c|c|c|c|c|c|}
 \hline

	&  \multicolumn{3}{|c|}{Mean-variance} &   \multicolumn{3}{|c|}{Our Approach}&   \multicolumn{3}{|c|}{DNN Relaxation}\\\hline
$\rho$	& mean &	 min	& max & 	mean 	&min &	max &	mean  	& min &	max \\\hline
-1.0	 &1.489&		1.054&	2.028&	1&1	&1	&1.001	&1	&1.008\\\hline
-0.7	& 1.251	&1.036	&1.492&	1&	1	&1&	1.001		&1	&1.006 \\\hline
-0.3	& 1.141	&1.023	&1.285&	1		&1	&1	&1.001		&1	&1.004 \\\hline
0.0	 &1.088	&1.016	&1.185	&1	&1&	1	&1.001&	1.001	&1.007 \\\hline
0.3	&1.051	&1.010&	1.111&	1		&1	&1&	1.001	&	1	&1.002 \\\hline
0.7	&1.017	&1.001&	1.039&	1		&1&	1&	1.001	&1	&1.001 \\\hline
1.0&	1.010&	1&	1.055&	1		&1&	1	&1.002&1	&1.056 \\\hline
 \end{tabular}
 \end{table}

\begin{figure}[h!]
\begin{centering}
 \begin{subfigure}{0.43\textwidth}
   \includegraphics[scale=0.4]{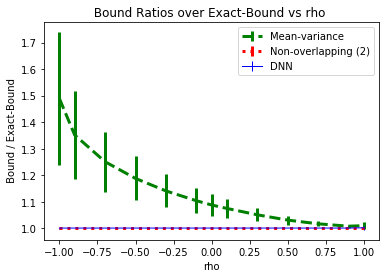}
   \caption{\centering{Ratio of objective value and
       $Z_{app}^\ast(\bold{s})$ for various values of $\rho$, $n=6$}}
      \label{fig:app-sched-bounds}
    \end{subfigure}~~~
      \begin{subfigure}{0.43\textwidth}
    \includegraphics[scale=0.4]{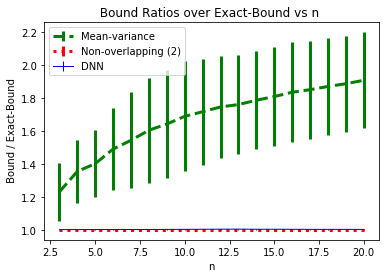}
     \caption{\centering{Ratio of objective value and
       $Z_{app}^\ast(\bold{s})$ for various values of $n$, $\rho=-1$}}
      \label{fig:app-sched-bounds-n}
    \end{subfigure}
      \caption{Bound Ratios of various approaches.}
\end{centering}
 \end{figure}
 It is evident from Figure \ref{fig:app-sched-bounds} that the bound
 resulting from mean-variance formulation in
 \eqref{eq:app-sched-mom-prob} is significantly higher than
 $Z_{app}^\ast(\bold{s})$ for negative values of $\rho.$ As $\rho$
 approaches $1$, the bound resulting from \textit{Mean-Variance}
 approach appears to coincide with $Z^\ast_{app}(\bold{s})$.

 While the numerical results appear to suggest that the
 distributionally robust formulation with partial correlation
 information offers a behaviour similar to that of the
 \textit{Mean-Variance} approach as $\rho \rightarrow 1,$ it is
 worthwhile to note that the correlation coefficients between
 $\tilde{c}_i$ and $\tilde{c}_{i+1}$ need not equal 1 for the
 worst-case distribution that attains the supremum in the
 \textit{Mean-Variance} formulation
 \eqref{eq:app-sched-mom-prob}. Indeed, given marginal distributions,
 for objective functions that are supermodular in its random variable
 arguments, it is well known that the comonotone joint distribution
 maximises the expectation (see, for example,
 \cite{Lorentz1953,cambanis1976inequalities,galichon2016optimal}). However
 this comonotone joint distribution may very well be such that the
 correlation coefficients between its components are lesser than 1.
This also explains the reason why the mean-variance bound need not exactly match the Large-SDP bound for $\rho=1$ (see the last row in \Cref{tab:app-sched-bound-rho}).

 From \Cref{fig:app-sched-bounds,fig:app-sched-bounds-n}, we also observe that the \textit{DNN-relaxation} approach
 consistently gives a good approximation ratio (close to 1, see \Cref{tab:app-sched-bound-rho} for specific values), though it
 tends to be computationally expensive for large values of $n;$ see
 Figure \ref{fig:app-sched-performance-B-2} for comparison of
 execution times for the different approaches considered.

 \begin{minipage}{\textwidth}
 \begin{minipage}{0.43\textwidth}
\includegraphics[scale=0.38]{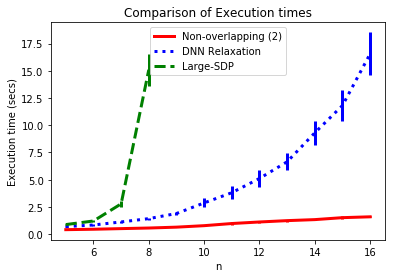}
\captionof{figure}{Execution times in seconds of various approaches
  with $n$
    \label{fig:app-sched-performance-B-2}}
\end{minipage} ~~~
\begin{minipage}{0.45\textwidth}
 \begin{tabular}{|c|c|c|c|}
 \hline
$n$ & Mean	& Min  & Max  \\\hline
30 &	 8.397 	& 8.052 & 	 8.835 \\\hline
40 	& 19.565 	& 18.712 	& 21.127 \\\hline
50 	& 41.215	   &	 38.515 &	 48.330 \\\hline
60 	& 78.533 &	  75.563 	& 82.552 \\\hline
70 	& 129.533 		& 122.533 	& 142.875 \\\hline
80 	& 227.400 	 &	 206.607 &	 244.174 \\\hline
90 	& 416.586  &	 343.712 	& 478.861 \\\hline
100 	&  672.803 	 &	 611.037	& 716.489 \\\hline
\end{tabular}
\captionof{table}{Execution times (in sec) for solving the
  semidefinite program in Theorem \ref{thm:SmallSDP-app-sched}}
\label{tab:exec-times-large-n}
  \end{minipage}
  \end{minipage}

  It can be inferred from Figure \ref{fig:app-sched-performance-B-2} that the
  \textit{Large-SDP} approach is computationally prohibitive for large
  values of $n.$ The mean, minimum and maximum of observed execution
  times of the semidefinite program in Theorem
  \ref{thm:SmallSDP-app-sched} are provided for larger values of $n$
  in \Cref{tab:exec-times-large-n}. Even for $n = 100$, the average
  execution time of our approach is only 672 seconds (roughly $11$
  minutes).

\subsection{Comparison of optimal schedules}
\label{simulations:schedule}
We next compare the optimal schedule obtained using the semidefinite
program in Corollary \ref{opt:optimal_schedule} with those obtained
from the \textit{Mean-Variance} and \textit{DNN-relaxation}
approaches. For this purpose, we consider $n=20$ patients, all with
mean processing duration $2$ and standard deviation $0.5$. We take
$T = 45$ units to be the time within which the schedules need to be
fit. Figure \ref{fig:schedules-mean-blk-rho-1} -
\ref{fig:schedules-mean-blk-rho-0-large-t} portray the schedules,
respectively, for the cases where the correlation coefficient between
$\tilde{u}_i$ and $\tilde{u}_{i+1},$ for $i \in \{1,3,\ldots,n-1\}$ is
given by $\rho = 1,-1, 0$ and $-0.5.$ In order to understand the differences
in the optimal schedules when the full covariance matrix is known, we
plot the schedules given by the \textit{DNN-relaxation} approach for
the specific instance where the covariance entries that are not
specified are set to 0. We use the label \textit{DNN-Full covariance} for this scenario and \textit{DNN-Non-overlapping}  for the DNN-relaxation with partial moments.

Interestingly, for negative values of $\rho,$ we observe that the
inclusion of partial correlation information results in optimal
schedules that are considerably different (in the relative durations
allotted for earlier and later patients) when compared to those
resulting from the \textit{Mean-Variance} approach that assumes only
the knowledge of mean and variance (see Figure
\ref{fig:schedules-mean-blk-rho--1}). For the extreme case where
$\rho = -1,$ we observe that the worst-case waiting time,
$Z^\ast_{app},$ in the presence of partial correlation information is
4.116; this quantity is much smaller when compared to the worst case
expected total waiting time of 25.615 for the \textit{Mean-Variance}
approach where the partial correlation information is not included in
the formulation. Moreover, we observe that employing the optimal
schedule resulting from the mean-variance approach increases the
worst-case waiting time, $Z^\ast_{app}(\cdot),$ by nearly 100\% over
the optimal $Z^\ast_{app}.$ On the other hand, employing the optimal
schedule from our formulation
\eqref{opt:optimal_schedule_mean_variance} results in about 30\%
increase in the worst-case waiting time $Z^\ast_{mv}(\cdot).$ Such
stark changes in the structure and objective value for optimal
schedules are typically not observed for nonnegative values of $\rho$
(see Table \ref{tab:swap-schedules}).

 \begin{table}[h!]
   \caption{Mean percentage increase in the worst-case waiting time
     $Z^\ast_{app}(\cdot)$ when the  optimal schedule from
     \textit{Mean-Variance} approach is used instead of the optimal
     schedule that minimizes $Z^\ast_{app}(\bold{s})$, and vice versa,
     for $n = 20$ and cases $\rho=-1,0$ and $1.$ The rows indicate schedules and columns
     indicate the DRO formulation used: M-V for the objective, $Z^\ast_{mv}(\cdot),$ of the
     \textit{Mean-Variance} approach and P-C for the objective,
     $Z^\ast_{app}(\cdot),$  that  also includes the knowledge of partial
     correlations.}
 \label{tab:swap-schedules}
 \centering
  \begin{tabular}{|c|c|c||c|c||c|c|}
   \hline
   & \multicolumn{2}{|c|}{$\rho=-1$} & \multicolumn{2}{|c|}{$\rho=0$} & \multicolumn{2}{|c|}{$\rho=1$} \\\hline
   \backslashbox{Schedule}{Objective}& M-V  & P-C & P-C  &P-C & M-V  & P-C\\\hline
   M-V optimal& 0 & 98 &0  & 7.9 & 0 & 2.8\\\hline
   P-C optimal & 34 & 0& 5.2 &0  & 1.9 &0 \\\hline
 \end{tabular}
 \end{table}

\begin{figure}[h!]
\begin{subfigure}{0.48\textwidth}
\includegraphics[scale=0.4]{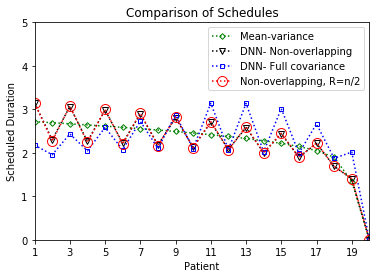}
\caption{ Correlation between patient 1 and 2 = correlation between
  patients 3 and 4 = $\ldots = \rho= 1$. For Full-covariance, all
  other correlations are set to zero. Maximum time available (T) is 45
  units. Optimal objective value = 25.0688, \textit{Mean-Variance}
  bound= 25.6151, DNN relaxation bound= 25.1534}
 \label{fig:schedules-mean-blk-rho-1}
 \end{subfigure}~~~
 \begin{subfigure}{0.48\textwidth}
\includegraphics[scale=0.4]{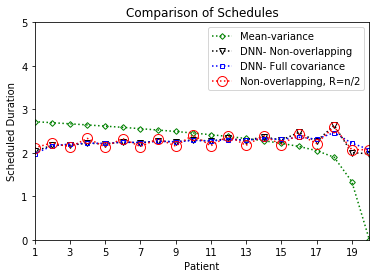}
\caption{Correlation between patient 1 and 2 = correlation between
  patients 3 and 4 = $\ldots = \rho= -1$. For Full-covariance, all
  other correlations are st to zero. Maximum time available (T) is 45
  units. Optimal objective value = 4.1162, \textit{Mean-Variance} bound= 25.6151, DNN
  relaxation bound= 4.2290 }
 \label{fig:schedules-mean-blk-rho--1}
 \end{subfigure}

 \begin{subfigure}{0.48\textwidth}
\includegraphics[scale=0.4]{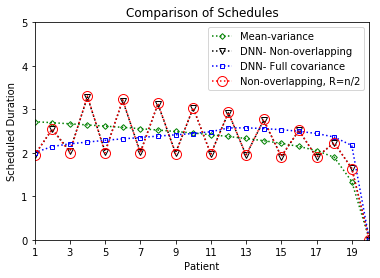}
\caption{Correlation between patient 1 and 2 = correlation between
  patients 3 and 4 = $\ldots = \rho= 0$. For Full-covariance, all
  other correlations are set to zero. Maximum time available (T) is 45 units.
  Optimal objective value=19.7474, \textit{Mean-Variance} bound= 25.6151 , DNN
  relaxation bound= 19.8607}
 \label{fig:schedules-mean-blk-rho-0}
 \end{subfigure}~~~
  \begin{subfigure}{0.48\textwidth}
\includegraphics[scale=0.4]{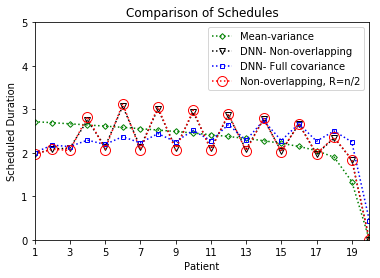}
\caption{ Correlations between patient 1 and 2 = correlations between
  patients 3 and 4 = $\ldots = \rho= -0.5$. For Full-covariance, all
  other correlations are set to zero. Maximum time available (T) is 45
  units.  Exact bound= 14.6842, \textit{Mean-Variance} bound=   25.6151, DNN
  relaxation bound=    14.7904}
 \label{fig:schedules-mean-blk-rho-0-large-t}
 \end{subfigure}
 \caption{Optimal schedules under knowledge of non-overlapping
   moments. 20 patients all with mean 2 and standard deviation 0.5.}
 \label{fig:schedules-expts}
\end{figure}



\subsection*{Acknowledgements} The research of the first and the second author was partially supported by the MOE Academic Research Fund Tier 2
grant T2MOE1706, ``On the Interplay of Choice, Robustness and Optimization in Transportation'' and the SUTD-MIT
International Design Center grant IDG21700101 on ``Design of the Last Mile Transportation System: What Does the
Customer Really Want?''. The authors would like to thank Teo Chung-Piaw (NUS) for providing some useful references on this research.
\bibliographystyle{abbrv}
\bibliography{partial_correlation_dro_ref}

\end{document}